\documentclass[11pt,reqno]{amsart}
\usepackage{amsmath,dsfont}
\usepackage{amssymb}
\usepackage{amsthm}
\usepackage{color}
\usepackage{eucal}
\usepackage{tikz}
\usetikzlibrary{matrix,arrows,decorations.markings}
\usepackage{gastex}
\usepackage{amscd}
\usepackage{bm}

\usetikzlibrary{decorations.pathmorphing}
\DeclareSymbolFont{rsfscript}{OMS}{rsfs}{m}{n}
\DeclareSymbolFontAlphabet{\mathrsfs}{rsfscript}
\textwidth=125mm
\textheight=195mm

\def\softd{{\leavevmode\setbox1=\hbox{d}\hbox
            to 1.15\wd1{d\kern-0.2ex{\char039}\hss}}}    %slightly improved
\def\softl{l\kern-0.3ex\raise0.1ex\hbox{'}\kern-0.3ex}   %newly introduced
\numberwithin{equation}{section}
\newtheorem{Thm}{Theorem}[section]
\newtheorem{Prop}[Thm]{Proposition}
\newtheorem{Lemma}[Thm]{Lemma}
\newtheorem{Def}{Definition}[section]

\newtheorem{Cor}[Thm]{Corollary}

\DeclareSymbolFont{rsfscript}{OMS}{rsfs}{m}{n}
\DeclareSymbolFontAlphabet{\mathrsfs}{rsfscript}

\theoremstyle{remark}
\newtheorem{Rmk}{Remark}
\newtheorem{Problem}{Problem}
\newtheorem{Notation}{Notation}[section]
\newcommand{\ord}{\operatorname{ord}}
\def\inv{^{-1}}
\def\wh#1{\widehat{#1}}
\def\til#1{\widetilde {#1}}
\def\ol#1{\overline{#1}}
\def\rb{{-\!\!\!-\!\!\!-\!\!\!-\!\!\!\!\!\rightarrow}}
\def\rbl{{-\!\!\!-\!\!\!-\!\!\!-\!\!\!-\!\!\!\!\!\rightarrow}}
\def\Ga{\Gamma}

\def\fF{\mathfrak{F}}

\def\vp{\varphi}

\def\cA{\mathrsfs A}

\def\cC{\mathrsfs C}
\def\cH{\mathrsfs H}
\def\cK{\mathrsfs K}

\def\mG{\mathcal G}
\def\mA{\mathcal A}

\def\mU{\mathcal U}
\def\mV{\mathcal V}
\def\sfN{\mathsf N}

\begin{document}

\title{{Formations}  of finite groups \linebreak with the M.~Hall property}
\author{K.~Auinger and A.~Bors}\thanks{The second author is supported by the Austrian Science Fund (FWF), project J4072-N32 ``Affine maps on finite groups'', and carried out part of the work on this paper while still enjoying the support of FWF project F5504-N26, a part of the Special Research Program ``Quasi-Monte Carlo Methods: Theory and Applications''.}
\address{Fakult\"at f\"ur Mathematik, Universit\"at Wien,
Oskar-Morgenstern-Platz 1, A-1090 Wien, Austria}
\email{karl.auinger@univie.ac.at}
\address{The University of Western Australia, Centre for the Mathematics of Symmetry and Computation,
35 Stirling Highway, 6009 Crawley, Australia}
\email{alexander.bors@uwa.edu.au}

\subjclass[2010]{20E18, 20F65, 05C25}
\keywords{profinite group, profinite graph, formation of finite groups}
\begin{abstract} The first examples of formations which are arboreous (and therefore Hall) but not freely indexed (and therefore not locally extensible) are found. Likewise, the first examples of solvable formations which are freely indexed and arboreous (and therefore Hall) but not locally extensible are constructed. Some open questions are also mentioned.
\end{abstract}
\maketitle
\section{Introduction}

This paper belongs to the aftermath of the Ribes--Zalesski\u\i-Theorem \cite{RZ}, which states that the set-product $H_1\cdots H_n$ of any finite number of finitely generated subgroups $H_i$ of a free group $F$ is closed in the profinite topology of $F$. The theorem originally was motivated by a paper by Pin and Reutenauer \cite{PR}: it provided a nice algorithm to compute the closure (with respect to the profinite topology) of a rational subset of  $F$ and implied the truth of the Rhodes type II conjecture, hence has attracted wide attention among semigroup theorists. Since then several papers were devoted to the study of which profinite topologies  on $F$ admit the Ribes--Zalesski\u\i-Theorem in the sense that the product of finitely generated closed subgroups should be closed again. The original proof by Ribes and Zalesski\u\i\ holds for the pro-$\mathbf V$-topology of any extension-closed variety of finite groups $\mathbf V$. This was generalized by Steinberg and the first author \cite{geometry0} to so-called arboreous varieties and to arboreous formations $\mathfrak{F}$ by the first author \cite{geometry1}. One intention of the present paper is to put this question into a more appropriate framework, namely profinite (Hausdorff) topologies in general: the restriction to pro-$\mathbf V$-topologies or pro-$\mathfrak{F}$-topologies for varieties $\mathbf{V}$ and/or formations $\mathfrak{F}$ is not really justified, and in section \ref{non-solvable} a profinite topology on $F$ is constructed which admits the Ribes--Zalesski\u\i-Theorem and such that the finite continuous quotients of $F$ are not closed under direct powers. Hence this topology cannot be the pro-$\mathfrak{F}$-topology of any formation $\mathfrak{F}$. As in  earlier papers, the geometry of the profinite Cayley graph of the (relative) profinite completion of $F$ plays an important role. It turns out that being ``tree-like'' of that graph is sufficient for the Ribes--Zalesski\u\i-Theorem to hold and is ``almost'' necessary (in a precisely defined sense, see Theorem \ref{RZTheorem}).

Up to now there was basically only one method known to produce profinite groups with tree-like Cayley graphs, namely via inverse systems which admit ``universal $S$-extensions'' (for $S$ a finite simple group, see subsection \ref{universal S extension}). The second intention of the paper is to present new ways to produce such groups which, in particular, avoid the use of the mentioned universal extensions. We shall present two different methods: one is by use of permutation groups (Section \ref{non-solvable}) the other is by use of a kind of modified universal $S$-extension (Section \ref{solvable}) for $S=C_p$ the cyclic group of prime order. Both constructions lead to examples that can be extended to the level of formations (that is, in both cases we get ``arboreous'' formations which are not ``locally extensible''). Two further properties of a profinite group and of a profinite Cayley graph --- that of being ``freely indexed'' and that of being ``Hall''  --- show up in this context and will be also discussed. Finally, in Section \ref{preliminaries} we collect all preliminaries while in Section \ref{geometry and topology} we present the relevant results concerning profinite topologies on a free group $F$ and its connection with the geometry of the Cayley graph of the (relative) profinite completion of $F$.

\section{Preliminaries}\label{preliminaries} In this section we collect all prerequisites needed for the paper; the main ingredients are: graphs and profinite graphs; categories of $A$-generated groups and $A$-labeled graphs; universal $S$-extensions of $A$-generated groups; subgroups of free groups.
\subsection{Graphs and profinite graphs}
We follow the Serre convention~\cite{Serre} and
define a \emph{graph} $\Gamma$ to consist of a set $V(\Gamma)$ of
\emph{vertices} and disjoint sets $E(\Gamma)$ of \emph{positively
oriented (or positive) edges} and $E\inv(\Gamma)$ of
\emph{negatively oriented (or negative) edges} together with
\emph{incidence} functions $\iota,\tau:E(\Gamma)\cup
E(\Gamma)\inv\rightarrow V(\Gamma)$ selecting the \emph{initial},
respectively, \emph{terminal} vertex of an edge $e$ and mutually
inverse bijections (both written: $e\mapsto e\inv$) between
$E(\Gamma)$ and $E\inv(\Gamma)$ such that $\iota e\inv = \tau e$
for all edges $e$ (whence $\tau e\inv =\iota e$, as well).  We set
$\til {E(\Gamma)}=E(\Gamma)\cup E\inv (\Gamma)$ and call it the
\emph{edge set} of $\Gamma$. Given this definition of a graph as a \emph{two-sorted structure} $(\til{E}, V,\iota,\tau,{}^{-1})$, the
notions of \emph{ subgraph (spanned by a set of edges)}, \emph{
morphism of graphs} and \emph{ projective limit of graphs} have  {well-defined} meanings. In particular, morphisms  respect sorts (edges to edges, vertices to vertices) and are compatible with the operations $\iota,\tau$ and ${}^{-1}$, subgraphs are closed under $\iota, \tau $ and ${}^{-1}$.

Edges are to be thought of geometrically: when
one draws an oriented graph, one draws only the positive edge $e$  and
thinks of $e^{-1}$ as being the same edge, but traversed in the
reverse direction. A \emph{geometric edge} in a graph is a pair $e^{\pm 1}=\{e,e^{-1}\}$ consisting of an edge and its inverse.
A \emph{path} $\pi$ in a graph $\Gamma$ is a finite sequence $\pi=e_1\dots
e_n$ of consecutive edges, that is $\tau e_i = \iota e_{i+1}$ for
all $i$; we define $\iota \pi= \iota e_1$ to be the initial vertex
of $\pi$ and $\tau \pi=\tau e_n$ to be the terminal vertex of $\pi$.  A path is \emph{reduced} if it does not contain a segment of
the form $ee\inv$ for any edge $e$. We also consider an empty path at each vertex.   A path $\pi=e_1\dots
e_n$ is a \emph{circuit at (base point)} $v$ or \emph{closed at} $v$ if $\iota \pi=v=\tau \pi$.
 A graph is \emph{connected} if any two vertices can be joined by a
path. A connected graph is a \emph{tree} if it does not contain a non-empty reduced circuit.

If $V(\Gamma)$, $E(\Gamma)$ and $E(\Ga)\inv$  are topological spaces,
$\til{E(\Gamma)}$ is the topological sum of $E(\Gamma)$ and
$E\inv(\Gamma)$, and $\iota$, $\tau$ and $(\ )\inv$ (in both directions) are continuous,
then $\Gamma$ is called a \emph{topological graph}. A
\emph{profinite graph} is a topological graph $\Gamma$ which is a
projective limit of finite, discrete graphs. It is well known
\cite{Mel, RZ3} that $\Gamma$ is profinite if and only if
$V(\Gamma)$ and $\til {E(\Gamma)}$ are both compact, totally
disconnected Hausdorff spaces.
Morphisms between profinite graphs,  {by definition}, are assumed to be
continuous. \emph{Subgraphs} of profinite graphs are understood in the category of profinite graphs: they must be closed as topological spaces. Moreover, a \emph{connected profinite graph} by definition is one all of whose finite continuous quotients are connected as abstract graphs (such are termed ``profinitely connected'' in \cite{AWred, AWsurvey}). For more information about profinite graphs the reader is referred to \cite{profinite graphs,AWred, AWsurvey, RZ3} and \cite[Section 2]{geometry0}.

Next we introduce geometric properties of profinite graphs that are central to the paper.
\begin{Def} \label{Hall-subgraph} Let $\Gamma$ be a connected profinite graph.
\begin{enumerate}
\item A connected subgraph $\Delta$ of $\Gamma$ is a \emph{Hall-subgraph} of $\Gamma$ if, whenever $\Delta$ contains the endpoints of a finite reduced path $\pi$ in $\Gamma$ then $\Delta$ contains (the graph spanned by) $\pi$ itself.
\item The graph $\Gamma$ has the \emph{Hall property} or simply is \emph{Hall} if every connected subgraph of $\Gamma$ is a Hall-subgraph.
\end{enumerate}
\end{Def}
A condition apparently  being stronger than being Hall  is the following.
\begin{Def} \label{definition:tree-like} A connected profinite graph $\Gamma$ is \emph{tree-like} if for every pair $\{u,v\}$ of vertices the intersection of all connected subgraphs of $\Gamma$ containing $\{u,v\}$ is connected.
\end{Def}
 A tree-like graph without loop edges is Hall. A finite graph which is Hall or tree-like necessarily is a tree (with possibly loop edges adjoined to some vertices in the tree-like case); so, these  concepts are two possibilities to transfer the notion of a tree to the context of profinite graphs. There are other possibilities  {as well}, all of which lead to different outcomes: pro-$p$-trees  and similarly defined graphs in the sense of \cite{RZ,RZ2,RZ3,profinite graphs} are tree-like and therefore Hall but the converse is not true.

The profinite graphs of primary interest are subgraphs of Cayley graphs of
 profinite groups where a \emph{profinite group} is
a compact, totally disconnected group, or, equivalently, a
projective limit of finite groups. We refer the reader
to~\cite{RZbook1} for basic definitions on
profinite groups and likewise for the definitions of a \emph{formation} $\mathfrak{F}$ and a \emph{variety} $\mathbf{V}$  \emph{of finite groups} and their \emph{relatively free} profinite groups.

\subsection{Categories of $A$-generated groups and $A$-labeled graphs} In this paper,
$A$ always denotes a fixed finite set (called \emph{alphabet} in this context) with at least two elements (called \emph{letters}). An \emph{$A$-generated group} is a pair $(G,\vp)$ where $G$ is a group and $\vp$ is a map $\vp:A\to G$ such that $G$ is generated by $\vp(A)$ ($\vp$ need not be injective). A \emph{morphism $\chi$ of $A$-generated groups} $(G,\vp)\to (H,\psi)$ is a morphism of groups $\chi:G\to H$ satisfying $\chi\circ\vp=\psi$; in particular, for any two $A$-generated groups $(G,\vp)$, $(H,\psi)$ there is at most one morphism $(G,\vp)\to (H,\psi)$ which is necessarily surjective and which we call the \emph{canonical morphism} and denote by $G\twoheadrightarrow H$ if it exists. Throughout we shall generously treat isomorphic $A$-generated groups as being equal; it follows that the collection of all $A$-generated groups forms a  {(partially ordered)} set (being in bijective correspondence with the set of all normal subgroups of the $A$-generated free group $F$). This set naturally forms a category (having at most one morphism between any two of its objects), and so does every subset. The category of all $A$-generated groups admits products (we need only finite products): given $A$-generated groups $(G,\vp)$ and $(H,\psi)$, their \emph{product}, denoted $(G\mathrel{\underset{A}{\times}}H, \vp\times \psi)$, is the subgroup of the Cartesian product $G\times H$ generated by the set $\{(\vp(a),\psi(a))\mid a\in A\}$ (which, as a group, is always a subdirect product of $G$ and $H$)  with  $\vp\times \psi:A\to G\mathrel{\underset{A}{\times}}H$, given by $a\mapsto (\vp(a),\psi(a))$. Every category of $A$-generated groups closed under finite products naturally is an {inverse system} of $A$-generated groups. Throughout the paper, an $A$-generated group $G$ is usually assumed to be finite (unless it is free or profinite, or stated otherwise).

Usually, the mapping $\vp:A\to G$ of an $A$-generated group is understood and will be omitted, and one thinks of $A$ being a subset of $G$. However, if $\vp$ happens to be not injective then it is often convenient to assume that $A$ is linearly ordered: $A=\{a_1<a_2<\cdots<a_n\}$, or equivalently, $A$ is the \textbf{sequence} $(a_1,\dots,a_n)$ rather than the set $\{a_1,\dots,a_n\}$. Then $\vp(A)$ becomes the \emph{generating sequence} $(\vp(a_1),\dots,\vp(a_n))$ of $G$ rather than the generating set $\{\vp(a_1),\dots,\vp(a_n)\}$. This distinction is relevant for example in the proofs of Theorems \ref{universal property} and \ref{quotients form formation} as well as in Subsection \ref{universal S extension}.

Next, we use $A\inv$ to denote a disjoint copy of
$A$ consisting of formal inverses $a\inv$ of the letters $a$ of $A$, and set  $\til A=A\cup A\inv$. The $A$-generated free group will throughout be denoted $F$, its elements will be realized as reduced words over the alphabet $\til A$. For an $A$ generated group $G=(G,\vp)$ and a word $w\in F$ we shall denote by $[w]_G$ the \emph{value of $w$ in $G$}, that is, $[w]_G:=\wh{\vp}(w)$ where $\wh{\vp}:F\twoheadrightarrow G$ is the unique extension of $\vp:A\to G$ to a morphism $F\to G$. Throughout, for every $A$-generated group $G$ and every $a\in \til A$ we shall write simply $a$ instead of $[a]_G$.

An \emph{$A$-labeled graph} is a graph together with a labeling function $\ell:\til E\to \til A$ such that $\ell(e)\in A$  and $\ell(e\inv)=\ell(e)\inv$ for each positive edge $e$. A \emph{morphism of labeled graphs} is assumed to respect the labelling. Given a path $\pi=e_1\dots e_n$ in a labeled graph, the label $\ell(\pi)$ of that path is just $\ell(e_1)\cdots\ell(e_n)$.

The \emph{Cayley graph of the $A$-generated group $(G,\vp)$},
denoted by $\Gamma(G)$, has vertex set $G$, edge set $G\times
\til A$, incidence functions given by $\iota (g,a) = g$, $\tau
(g,a) = g(\vp a)$ and involution $(g,a)\inv = (g(\vp a),a\inv)$. We call $a\in \til A$ the \emph{label} of $(g,a)$. The edge $(g,a)$ is usually drawn and thought of as $\underset{g}{\bullet}\!\overset{a}{\rbl}\!\!\!\!\!\underset{g(\vp a)}{\bullet}$.

 \subsection{The universal $S$-extension of an $A$-generated group}\label{universal S extension} The \emph{rank} $d(G)$ of a finitely generated (pro)finite group $G$ is the smallest possible size of a generating set (of a dense subgroup). For a finite simple group $S$ and a finitely generated (not necessarily $A$-generated) free group $R$ let $R(S)$ be the intersection of all normal subgroups $N$ of $R$ for which $R/N$ is a (finite) power of $S$. Then  $R/R(S)$ is a finite power of $S$ and $R(S)$ is characterisic in $R$. To see the former, let $r=d(R)$; it is well known that (i) every finite subdirect power of $S$ is actually a direct power of $S$ and (ii) there exists $k>1$ such that $S^k$ is $r$-generated, but $S^{k+1}$ is not. Let $K\unlhd R$ be such that $R/K\cong S^k$ and let $L\unlhd R$ be such that $R/L\cong S^l$ (for some $l\le k$).  Then $R/{K\cap L}$ is an $r$-generated subdirect product of $S^k$ and $S^l$ and, since this is a direct power of $S$ it must be isomorphic with $S^k$ (since it is $r$-generated and surjects to $S^k$). It follows that the projection $R/{K\cap L}\to R/K$ is injective, whence $K\cap L=K$, that is, $K\subseteq L$.
 Altogether, every normal subgroup $L$ of $R$ for which $R/L$ is a direct power of $S$ contains  $K$. It follows that $K=R(S)$. The interested reader may consult \cite[Theorem 2.22]{Collins} for more details. To see the latter, let $\alpha:R\to R$ be an automorphism; for $L\unlhd R$ with $R/L$ a power of $S$ then $R/L\cong \alpha(R)/\alpha(L)=R/\alpha(L)$ so that $\alpha(L)$ also belongs to the set of normal subgroups the intersection of which is $R(S)$, hence this intersection is  invariant under $\alpha$.

For an $A$-generated finite group $G$ with canonical morphism $\varphi:F\twoheadrightarrow G$ let $R:=\mathrm{ker}(\varphi)$. We set
$$G^{A,S}:=F/R(S).$$
Thus $G^{A,S}$ is an $A$-generated group, and an extension of $R/R(S)$ by $G$; immediately from the definition we get that $G^{A,S}$ enjoys the following universal property: suppose that $H$ is an $A$-generated group such that $H\overset{\psi}{\twoheadrightarrow}G$ and $\mathrm{ker}(\psi)$ is a power of $S$, then there exists a (unique) morphism of $A$-generated groups $\sigma: G^{A,S}\twoheadrightarrow H$.
Hence we call $G^{A,S}$ the \emph{$A$-universal $S$-extension} of $G$ \cite{geometry1}. We note that
$$\ker(G^{A,S}\twoheadrightarrow G)=\ker(F/R(S)\twoheadrightarrow F/R)=R/R(S)$$ and hence
\begin{equation}\label{Schreier for universal extension}
d(\ker(G^{A,S}\twoheadrightarrow G))=d(R/R(S))=d(R)=\vert G\vert(\vert A\vert -1)+1
\end{equation}
since $\vert G\vert =[F:R]$ and so (by the Schreier formula) the rank of $G^{A,S}$ is $\vert A\vert$, no matter what the rank of $G$ actually is. It should also be noted that, as a mere group,  $G^{A,S}$ depends on $G$, $S$ and $\vert A\vert$, but not on $A$. That is, it does \textbf{not} depend on the way the generating sequence $A$ is chosen within $G$, or, in other words, how the canonical morphism $\varphi:F\twoheadrightarrow G$ is actually chosen. (That it does depend has been erroneously claimed in several papers \cite{mytype2, geometry1, geometry0, constructive, supersolvable, F-inverse}.)

Indeed, let $\vp:F\twoheadrightarrow F/R=G$ and $\vp':F\twoheadrightarrow F/R(S)=G^{A,S}$ and $\widehat{\vp}: G^{A,S}\twoheadrightarrow G$ be the $A$-canonical morphisms, in particular $\vp=\wh{\vp}\circ \vp'$. Suppose that $B$ is an alphabet of size $\vert A\vert$, and $\psi:B\to G$ is a mapping making $G$ a $B$-generated group; we can consider the $B$-universal $S$-extension $G^{B,S}$ of $G$. That is, the group $G^{B,S}$ is, subject to a suitable mapping $\psi':B\to G^{B,S}$ a $B$-generated group with $B$-canonical morphism $\wh{\psi}:G^{B,S}\twoheadrightarrow G$. Since $\vert A\vert =\vert B\vert$ we can lift (with respect to $\wh{\psi}$) the generating sequence $A$ of $G$ to a generating sequence of $G^{B,S}$ \cite{Ga:lifting generators}. More precisely, there exists a map $\vp_1:A\to G^{B,S}$ such that:
\begin{itemize}
\item $\vp_1(A)$ generates $G^{B,S}$
\item $\wh{\psi}\circ \vp_1=\vp\vert_A$.
\end{itemize}
This way, $G^{B,S}$ becomes \textbf{some} $A$-generated extension of $G$ with canonical morphism $\wh{\psi}$ the kernel of which is a power of $S$. The universal property of $(G^{A,S}, \wh{\vp})$ then implies that $\wh{\vp}$ factors through $G^{B,S}$, in particular, there exists a surjective morphism $G^{A,S}\to G^{B,S}$. By symmetry there is also a surjective morphism $G^{B,S}\to G^{A,S}$ so that finiteness implies $G^{A,S}\cong G^{B,S}$.

For $S=C_p$ (the cyclic group of prime order $p$) we set $G^{A,\mathbb{Z}/p}:=G^{A,C_p}$  (or simply $G^{\mathbb{Z}/p}$ if the generating set $A$ of $G$ is clear). In this case $R(S)=R(C_p)=R^p[R,R]$ and $G^{A,\mathbb{Z}/p}$ is called the \emph{$A$-universal Gasch\"utz $p$-extension} or simply \emph{Gasch\"utz extension} \cite{Gaschuetz}. Moreover, a concrete model of $G^{A,\mathbb{Z}/p}$ can be given as follows. Let $E$ be the set of positive edges of the Cayley graph $\Gamma(G)$ of $G$; $G$ acts on $E$ by left multiplication: if we identify $E$ with $G\times A$ then this action can be most naturally described as:
\begin{equation}\label{action}
{}^g(h,a)=(gh,a)\mbox{ for all }g\in G\mbox{ and }(h,a)\in E.\end{equation}
Let $\mathbb{F}_p[E]$ be the additive group of the $\mathbb{F}_p$-vector space with basis $E$; the action (\ref{action}) extends to an action of $G$ on $\mathbb{F}_p[E]$ by automorphisms, hence we can form the semidirect product $\mathbb{F}_p[E]\rtimes G$ with respect to that action. Then, as it has been mentioned in \cite{geometry0, mytype2, constructive, supersolvable, Ballester},  $G^{A,\mathbb{Z}/p}$ is isomorphic with an $A$-generated subgroup of $\mathbb{F}_p[E]\rtimes G$, namely
$$G^{A,\mathbb{Z}/p}\cong\langle(e_a,a)\mid a\in A\rangle\le \mathbb{F}_p[E]\rtimes G$$
for $e_a=(1,a)$.

\subsection{Subgroups of free groups and Stallings automata}\label{stall}
Again let $F$ denote the free $A$-generated group. It is well known that finitely generated subgroups $H$ of $F$ can be encoded in terms of finite, labeled, pointed graphs \cite{kapmas,MSW,Stallings}. Let $\cA$ be a finite $A$-labeled graph; $\cA$ is \emph{folded} or an \emph{inverse automaton} if for every letter $a\in A$ and every vertex $v$ there exists at most one edge starting and at most one edge ending at $v$ and having label $a$. In a folded graph (inverse automaton), the letters from $\til A$ induce partial injective mappings on the vertex set (usually considered to act on the right): $v\mapsto v\cdot a$ for vertices $v$ and $a\in \til A$; in particular, for every vertex $v$ and every word $w$ in the letters of $\til A$ there is at most one path starting at $v$ and having label $w$. From now on we assume all graphs to be folded. Suppose that $\cA$ has a distinguished vertex $1_\cA$ (the base point) and let $L(\cA,1_\cA)$ be the set of all elements $w\in F$ ($w$ given as a reduced word in $\til A$) such that $w$ labels a closed path at $1_\cA$ in $\cA$. Then $L(\cA,1_\cA)$ is a finitely generated subgroup of $F$.

A  graph $\cA$ with base point $1_\cA$ is \emph{reduced} if no vertex except perhaps $1_\cA$ has degree one (where the degree of a vertex $v$ is the number of positive edges $e$ for which $\iota e =v$ or $\tau e =v$).  {Then}, for every finitely generated subgroup $H$ of $F$ there is up to isomorphism exactly one finite, connected, $A$-labeled, reduced graph $\cA$ with base point $1_\cA$ such that $L(\cA,1_\cA)=H$. We shall denote this graph $\cH$ with base point $1_\cH$. It can be obtained as follows.  The \emph{Schreier graph} $\Sigma(F,H,A)$   has vertex set the set $H{\setminus}F$ of all right cosets $Hg$ in $F$ with respect to $H$ and edges $\underset{Hg}{\bullet}\!\overset{a}{\rb}\!\underset{Hga}{\bullet}$ for $g\in F$ and $a\in \til A$; the edge set then can be identified with $H{\setminus}F\times \til A$. Let the \emph{core graph} $\mathrm{core}(\Sigma(F,H,A),H)$ of $\Sigma(F,H,A)$ with respect to the base point $H$ be the subgraph of $\Sigma(F,H,A)$ spanned by all edges which are contained in a reduced closed path at base-vertex $H$. Then $\mathrm{core}(\Sigma(F,H,A),H)$ is isomorphic to $\cH$ and will be the called the \emph{core graph} or the \emph{Stallings automaton} of $H$. For a more constructive method to find $\cH$ the reader is referred to \cite{kapmas,MSW, Stallings}. %In any case, the core graph of $H=L(\cA,1_\cA)$, $\cA$ finite, can be obtained %from $\cA$ by removing finitely may vertices and edges from $\cA$ until the outcome is reduced.

A finite $A$-labeled graph $\cA$ is \emph{complete} or a \emph{permutation automaton} if the degree of every vertex is $2\vert A\vert$. That is, every letter $a\in \til A$ induces a permutation of the set of vertices of $\cA$. The group generated by these permutations is the \emph{transition group} $T_{\cA}$ of $\cA$ which is an $A$-generated group. Since every permutation representation of a group $G$ is equivalent to a permutation representation on the set of right cosets $U{\setminus}G$ for some subgroup $U$ of $G$, every finite permutation automaton is isomorphic with some finite Schreier graph. Indeed, if we fix a vertex $1_\cA$ in the complete graph $\cA$ then from   \cite[Section 1.6]{Robinson} it follows that $(\cA,1_\cA)$ and $(\Sigma(T_\cA,U,A),U)$ are isomorphic as pointed graphs where $U\le T_\cA$ is the stabilizer of $1_\cA$. Any permutation automaton $\cC$ in which an incomplete inverse automaton $\cA$ can be embedded will be called a \emph{completion of $\cA$}; in this case, we also say that $\cC$ \emph{extends} $\cA$.

\section{Profinite topologies and geometry of profinite graphs}\label{geometry and topology}
\subsection{Profinite topologies on the free group} Again we let  $F$ be the free group on some fixed finite set $A$ with $\vert A\vert\ge 2$.

\begin{Def}\label{Hausdorff filtered}
A family $\sfN$  of normal subgroups of $F$ of finite index is \emph{Hausdorff filtered} if
\begin{enumerate}\label{fisubgroups}
\item $K,N\in \sfN\Rightarrow K\cap N\in \sfN$
\item $K\in \sfN \Rightarrow N\in \sfN$ for all $N\unlhd F$ for which $K\subseteq N$
\item $\bigcap \sfN=\{1\}$.
\end{enumerate}
\end{Def}
For a Hausdorff filtered family $\sfN$ the set of all quotients $\mathfrak{N}:=\{F/N\mid N\in \sfN\}$ is  a category of finite $A$-generated groups. Conditions (1)--(3) imposed on the family $\sfN$ are reflected in the behavior of the category $\mathfrak{N}$:
\begin{Prop}\label{category of quotients}
A category $\mathfrak{N}$ of finite $A$-generated groups is comprised  of the quotients of some Hausdorff filtered family $\sfN$ if and only if
\begin{enumerate}
\item $\mathfrak{N}$ is closed under finite products of $A$-generated groups
\item $\mathfrak{N}$ is closed under quotients
\item $\mathfrak{N}$ does not satisfy any nontrivial relation, that is, for every $w\in F$, $w\ne 1$, there exists a group $G\in\mathfrak{N}$ for which $[w]_G\ne 1$.
 \end{enumerate}
\end{Prop}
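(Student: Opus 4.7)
The plan is to establish the proposition as a dictionary between Hausdorff filtered families of finite-index normal subgroups of $F$ and categories of finite $A$-generated groups satisfying (1)--(3). Concretely, I will show that the assignment $\sfN \mapsto \{F/N \mid N \in \sfN\}$ and its inverse $\mathfrak{N} \mapsto \{N \unlhd F \mid [F:N]<\infty,\ F/N \in \mathfrak{N}\}$ are mutually inverse bijections, and that each of the three filter axioms translates directly into the corresponding categorical axiom.

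The main computational ingredient is the identification $(F/K)\mathrel{\underset{A}{\times}}(F/L) \cong F/(K\cap L)$ for normal subgroups $K,L \unlhd F$ of finite index: by definition, the $A$-product is the subgroup of $F/K \times F/L$ generated by $\{(aK,aL) \mid a\in A\}$, which is precisely the image of the diagonal homomorphism $F \to F/K \times F/L$, and the kernel of that map is $K\cap L$. This identifies closure of $\sfN$ under finite intersections with closure of $\mathfrak{N}$ under finite $A$-products. A second elementary observation is that, for finite-index $K,N\unlhd F$, a canonical morphism $F/K \twoheadrightarrow F/N$ of $A$-generated groups exists iff $K \subseteq N$; hence upward closure of $\sfN$ matches closure of $\mathfrak{N}$ under $A$-generated quotients. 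The Hausdorff condition $\bigcap \sfN = \{1\}$ is transparently equivalent to (3), since a given $w\in F\setminus\{1\}$ fails to lie in some $N\in \sfN$ iff $[w]_{F/N} \ne 1$ for the corresponding group $F/N\in \mathfrak{N}$.

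Given these three equivalences, both directions of the proof become routine. For the ``only if'' direction I would simply read off conditions (1)--(3) on $\mathfrak{N}$ from the corresponding properties of $\sfN$, while for the ``if'' direction I would define $\sfN$ as the collection of all finite-index normal subgroups $N\unlhd F$ with $F/N \in \mathfrak{N}$ and verify the three filter axioms in exactly the same manner; the tautological identity $\mathfrak{N} = \{F/N \mid N\in\sfN\}$ closes the loop. The only point requiring care is to distinguish the $A$-product $(F/K)\mathrel{\underset{A}{\times}}(F/L)$, which is a subdirect product, from the ambient Cartesian product $F/K \times F/L$, since in general these differ and only the former represents $F/(K\cap L)$ as an $A$-generated group. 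Beyond this, no genuine obstacle arises.
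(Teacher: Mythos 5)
Your proof is correct and is precisely the routine translation the paper has in mind (the paper in fact states this proposition without proof, noting only that the conditions on $\sfN$ are ``reflected'' in $\mathfrak{N}$ and that one may pass back via $\sfN=\{\ker(F\twoheadrightarrow G)\mid G\in\mathfrak{N}\}$). The key identifications you supply --- $(F/K)\mathrel{\underset{A}{\times}}(F/L)\cong F/(K\cap L)$ via the diagonal map, the existence of a canonical morphism $F/K\twoheadrightarrow F/N$ iff $K\subseteq N$, and the equivalence of $\bigcap\sfN=\{1\}$ with relation-freeness --- are exactly the right ingredients, and your caution about distinguishing the $A$-product from the full Cartesian product is well placed.
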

We could equally well have started with a category $\mathfrak{N}$ of finite $A$-generated groups satisfying conditions (1-3) of Proposition \ref{category of quotients}, the corresponding Hausdorff filtered family $\sfN$ of finite index normal subgroups of $F$ then would be the family $\{\ker(F\twoheadrightarrow G)\mid G\in \mathfrak{N}\}$.
\begin{Def}\label{relation-free PQ category}
A category $\mathfrak{N}$ of finite $A$-generated groups that satisfies conditions (1) and (2) of Proposition \ref{category of quotients} is called \emph{$PQ$-category}; one that satisfies  condition (3) is called \emph{relation-free}.
\end{Def}
The definition of a $PQ$-category is very reminiscent of that of a formation; indeed, for every formation $\mathfrak{F}$ the category $\mathfrak{F}_A$ of its $A$-generated members is clearly a $PQ$-category; however, the converse need not be true: given a $PQ$-category $\mathfrak{N}$, there is no reason why $\mathfrak{N}$ should be comprised of \textbf{all} $A$-generated members of some formation (see section \ref{non-solvable} for examples) --- this is somehow in contrast with Remark 3.1.1 in \cite{RZbook1}.

The family $\sfN$, or equivalently the category $\mathfrak{N}$, defines a profinite topology (actually a uniformity) on $F$ --- called the \emph{pro-$\mathfrak{N}$-topology} on $F$ --- by letting $\sfN$ be a basis for the neighborhoods of $1$. We note that a subgroup $H$ of $F$ is open in that topology if and only if $N\subseteq H$ for some $N\in \sfN$, which is the case if and only if the core $H_F$ belongs to $\sfN$. Moreover, $H$ is closed if and only if $H$ is the intersection of all open subgroups $U$ containing $H$ \cite[Theorem 3.3]{Hall}. The pro-$\mathfrak{N}$ topology  is the weakest topology  making  {continuous} all canonical morphisms from $F$ to the members of $\mathfrak{N}$ (considered as discrete topological  spaces).
%\begin{Def}\label{extended family}
%For a Hausdorff filtered family $\sfN$ let the \emph{extended family} $\til{\sfN}$ be the set of all finite index subgroups %$H$ of $F$ for which $H\ge N$ for some $N\in \sfN$.
%\end{Def}
%
%The following facts concerning the pro-$\mathfrak{N}$-topology are well known \cite{Hall}:
%\begin{Res}
%\begin{enumerate}
%\item A subgroup $H\le F$ is pro-$\mathfrak{N}$-open if and only if $H$ is finite index and the core $H_F$ belongs to %$\sfN$. In particular, $H$ is pro-$\mathfrak{N}$-open if and only if $H\in \til{\sfN}$.
%\item A subgroup $H\le F$ is pro-$\mathfrak{N}$-closed if and only if $H$ is the intersection of all %pro-$\mathfrak{N}$-open subgroups containing $H$. In particular, the pro-$\mathfrak{N}$-closure $\overline H$ of a finitely %generated subgroup $H$ is the intersection of all pro-$\mathfrak{N}$-open subgroups containing $H$.
%\end{enumerate}
%\end{Res}

For a $PQ$-category $\mathfrak{N}$, the \emph{pro-$\mathfrak{N}$-completion} of $F$ is the profinite group
$$\wh{F_\mathfrak{N}}:=\varprojlim_{N\in \sfN}F/N=\varprojlim_{G\in \mathfrak{N}}G,$$
which is an $A$-generated profinite group (where an \emph{$A$-generated profinite group $\mG$} is a pair $(\mG,\vp)$ where $\vp:A\to\mG$ is a map such that $\vp(A)$ generates $\mG$ as a profinite group, that is, the abstract subgroup $\left<\vp(A)\right>$ generated by $\vp(A)$ is dense in $\mG$). Recall that the category $\mathfrak{N}$ is naturally an inverse system of finite groups. The condition $\bigcap \sfN=\{1\}$ implies that the mapping $F\to \wh{F_\mathfrak{N}}$, $w\mapsto (wN)_{N\in \sfN}$ embeds $F$ in $\wh{F_\mathfrak{N}}$. In particular, the abstract subgroup of $\wh{F_\mathfrak{N}}$ generated by $A$ is $F$. Conversely, every $A$-generated profinite group $\mG$ for which the abstract subgroup generated by $\vp(A)$ is $F$ is of the form $\wh{F_\mathfrak{N}}$ for $\mathfrak{N}$ a relation-free $PQ$-category.

Given a  morphism of finite $A$-generated groups $\vp:G\twoheadrightarrow H$ we have a morphism $\Ga(G)\twoheadrightarrow \Ga(H)$, usually also denoted $\varphi$, of finite $A$-labeled graphs which maps each vertex $g$ to $\vp g$ and each edge $(g,a)$ to $(\vp g,a)$. The category $\mathfrak{N}$ of finite $A$-generated groups thus leads to the category $\Gamma(\mathfrak{\mathfrak{N}})$ of finite $A$-labeled (Cayley) graphs which is an inverse system of finite $A$-labeled (Cayley) graphs. Setting $$\Gamma(\wh{F_\mathfrak{N}}):= \varprojlim_{G\in \mathfrak{N}}\Gamma(G)$$ we get the Cayley graph $\Gamma(\wh{F_\mathfrak{N}})$ of $\wh{F_\mathfrak{N}}$, which is a connected profinite graph with vertex set $\wh{F_\mathfrak{N}}$ and edge set $\wh{F_\mathfrak{N}}\times \til A$. The geometry of this graph encodes properties of the pro-$\mathfrak{N}$-topology of $F$ and is the main topic of the present paper.

\subsection{Dissolving constellations: how to get tree-like \hyphenation{Cayley} Cayley graphs} The purpose of this subsection is to characterize the $PQ$-categories $\mathfrak{N}$ for which $\Gamma(\wh{F_\mathfrak{N}})$ is either tree-like or Hall (recall Definitions \ref{Hall-subgraph} and \ref{definition:tree-like}).

\begin{Def}\label{constellation}
Let $\Ga$ be a connected (pro)finite graph with distinguished vertex $1$; a \emph{constellation} in $\Ga$ is a triple $(\Xi,g,\Theta)$ where
\begin{enumerate}
\item $g$ is a vertex of $\Ga$, $\Xi$ and $\Theta$ are connected subgraphs of $\Ga$,
\item $1,g\in \Xi\cap \Theta$,
\item $1$ and $g$ are in distinct connected components of $\Xi\cap \Theta$.
\end{enumerate}
\end{Def}
We note that a connected profinite graph is tree-like if and only if it does not admit a constellation \cite[Proposition 2.6]{geometry0}. The following definitions are crucial.
\begin{Def}\label{dissolve constellation}
Let $G$ be a finite $A$-generated group and $(\Xi,g,\Theta)$ be a constellation in $\Gamma(G)$; an $A$-generated group $H$ \emph{dissolves} the constellation $(\Xi,g,\Theta)$ if $[u]_H\ne [v]_H$ for all pairs of words $(u,v)\in F\times F$ for which
\begin{enumerate}
\item $[u]_G=g=[v]_G$
\item the path in $\Gamma(G)$ labeled $u$ starting at $1$ runs entirely in $\Xi$, that labeled $v$ (also starting at $1$) runs entirely in $\Theta$.
\end{enumerate}
\end{Def}
The next notion of (pre)dissolver turns out to be essential for the understanding of $PQ$-categories $\mathfrak{N}$ for which the Cayley graph of the profinite completion $\wh{F_\mathfrak{N}}$ is tree-like.
\begin{Def}\label{dissolver} Let $G$ be a finite $A$-generated group.
\begin{enumerate}
\item an $A$-generated group $H$ is a \emph{predissolver of} $G$ if $H$ dissolves every constellation of $\Gamma(G)$;
\item a predissolver $H$ of $G$ which in addition satisfies $H\twoheadrightarrow G$ is a \emph{dissolver of} $G$.
\end{enumerate}
\end{Def}
For every predissolver $H$ of $G$, the product $H\mathrel{\underset{A}{\times}}G$ is a dissolver of $G$. By \cite[Proposition 4.1]{geometry1} we have
\begin{Thm}\label{Cayley:tree-like} Let $\mathfrak{N}$ be a $PQ$-category; then the Cayley graph $\Gamma(\wh{F_\mathfrak{N}})$ is tree-like if and only if every $G\in\mathfrak{N}$ admits a (pre)dissolver in $\mathfrak{N}$.
\end{Thm}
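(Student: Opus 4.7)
The plan is to invoke the characterization (\cite[Proposition~2.6]{geometry0}) that a connected profinite graph is tree-like iff it admits no constellation, and then to bridge constellations in $\Gamma(\wh{F_\mathfrak{N}})$ with (non)dissolvability statements at the finite levels $\Gamma(G)$ via inverse-limit compactness. Throughout I write $\pi_G$ for the canonical continuous projection $\wh{F_\mathfrak{N}}\twoheadrightarrow G$ and likewise for the induced Cayley graph morphism $\Gamma(\wh{F_\mathfrak{N}})\twoheadrightarrow\Gamma(G)$.

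First I would handle the ``tree-like $\Rightarrow$ predissolvers'' direction. Fix $G\in\mathfrak{N}$; since $\Gamma(G)$ is finite there are only finitely many constellations, and because $\mathfrak{N}$ is closed under finite products and quotients it suffices to exhibit, for each constellation $(\Xi,g,\Theta)$ of $\Gamma(G)$, some $H\in\mathfrak{N}$ dissolving it---the $A$-generated product of all such $H$'s is then a predissolver, and a further product with $G$ is a dissolver. Fix a constellation and let $U$ (resp.\ $V$) be the set of reduced words $u\in F$ labeling paths from $1_G$ to $g$ that stay in $\Xi$ (resp.\ $\Theta$). If no $H\in\mathfrak{N}$ dissolved the constellation, then for every $H$ the images of $U$ and $V$ in $H$ would intersect; a finite-intersection-property argument on the compact closures $\overline{U},\overline{V}\subseteq\wh{F_\mathfrak{N}}$ then forces $\overline{U}\cap\overline{V}\ne\emptyset$. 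Picking $x$ in this intersection and letting $\widehat{\Xi}$ (resp.\ $\widehat{\Theta}$) be the closure in $\Gamma(\wh{F_\mathfrak{N}})$ of the union of all finite paths from $1$ projecting into $\Xi$ (resp.\ $\Theta$) yields two closed, profinitely connected subgraphs each containing $1$ and $x$. Any connected subgraph of $\widehat{\Xi}\cap\widehat{\Theta}$ projects into $\Xi\cap\Theta$, so if it contained both $1$ and $x$ its image would connect $1_G$ to $g$ inside $\Xi\cap\Theta$---impossible. Hence $(\widehat{\Xi},x,\widehat{\Theta})$ is a constellation in $\Gamma(\wh{F_\mathfrak{N}})$, contradicting tree-likeness.

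Conversely, assume every $G\in\mathfrak{N}$ has a predissolver in $\mathfrak{N}$ and suppose for contradiction that $(\widehat{\Xi},g,\widehat{\Theta})$ is a constellation in $\Gamma(\wh{F_\mathfrak{N}})$. The separation of $1$ and $g$ by components of the profinite subgraph $\widehat{\Xi}\cap\widehat{\Theta}$ must be witnessed at some finite continuous quotient, so after enlarging $G$ if necessary one obtains $G\in\mathfrak{N}$ for which $(\Xi_G,g_G,\Theta_G):=(\pi_G(\widehat{\Xi}),\pi_G(g),\pi_G(\widehat{\Theta}))$ is a genuine constellation in $\Gamma(G)$. Let $H\in\mathfrak{N}$ be a predissolver of $G$; pick reduced words $u,v\in F$ labeling finite paths from $1_G$ to $g_G$ inside $\Xi_G$ and $\Theta_G$ respectively. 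The dissolving property of $H$ gives $[u]_H\ne[v]_H$, whereas by lifting $u$ and $v$ to finite paths in $\widehat{\Xi}$ and $\widehat{\Theta}$ from $1$ to $g$ (possible once $G$ is large enough that such lifts stay inside the respective subgraphs) one gets $[u]_{\wh{F_\mathfrak{N}}}=g=[v]_{\wh{F_\mathfrak{N}}}$ and hence $[u]_H=[v]_H$---a contradiction.

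The hardest part I would expect is the descent step in the second direction: the image $\pi_G(\widehat{\Xi}\cap\widehat{\Theta})$ can be strictly smaller than $\pi_G(\widehat{\Xi})\cap\pi_G(\widehat{\Theta})=\Xi_G\cap\Theta_G$, so separation of $1_G$ and $g_G$ by components of $\pi_G(\widehat{\Xi}\cap\widehat{\Theta})$ need not be inherited by $\Xi_G\cap\Theta_G$. The plan is to pass to sufficiently fine $G$, using that $\widehat{\Xi}\cap\widehat{\Theta}=\bigcap_{G'}\pi_{G'}^{-1}(\Xi_{G'}\cap\Theta_{G'})$, so that the extra identifications disappear and $(\Xi_G,g_G,\Theta_G)$ becomes a bona-fide constellation whose $U$- and $V$-paths lift compatibly into the profinite subgraphs; verifying that such a $G$ exists and that the chosen finite words $u,v$ admit the required lifts is the technical core of this direction.
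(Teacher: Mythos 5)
Your forward direction (tree-like $\Rightarrow$ existence of predissolvers) is essentially sound: reducing to a single undissolved constellation $(\Xi,g,\Theta)$ of a fixed $G\in\mathfrak{N}$, the finite-intersection-property argument on $\bigcap_H\pi_H^{-1}(U_H\cap V_H)$ (using closure of $\mathfrak{N}$ under the products $H_1\mathrel{\underset{A}{\times}}H_2$ for directedness) does produce a point $x\in\overline{U}\cap\overline{V}$ with $\pi_G(x)=g$, and your projection argument correctly shows that no connected subgraph of $\widehat{\Xi}\cap\widehat{\Theta}$ can contain both $1$ and $x$. (The paper gives no proof of this theorem, citing \cite[Proposition 4.1]{geometry1}; your argument for this half matches the standard one.)

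The reverse direction has a genuine gap at the step ``by lifting $u$ and $v$ to finite paths in $\widehat{\Xi}$ and $\widehat{\Theta}$ from $1$ to $g$ \dots one gets $[u]_{\wh{F_\mathfrak{N}}}=g=[v]_{\wh{F_\mathfrak{N}}}$''. A word $u\in F$ evaluates in $\wh{F_\mathfrak{N}}$ to an element of the countable dense subgroup $F$, whereas $g$ is an arbitrary vertex of $\Gamma(\wh{F_\mathfrak{N}})$; in general no word whatsoever evaluates to $g$, and enlarging $G$ does not help. More fundamentally, a connected profinite subgraph such as $\widehat{\Xi}$ need not contain any finite path joining two of its vertices --- this failure is precisely the phenomenon the Hall/tree-like machinery is built to handle (compare Definition \ref{Hall-subgraph}) --- so a path $1_G\to g_G$ in $\Xi_G$ has no reason to ``lift'' to a path $1\to g$ inside $\widehat{\Xi}$. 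As written, the contradiction $[u]_H=[v]_H$ is never actually reached.

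The repair is to produce $u$ and $v$ at the level of the dissolver, not of $\wh{F_\mathfrak{N}}$. Your compactness descent to a finite $G$ with $(\Xi_G,g_G,\Theta_G)$ a genuine constellation is the right idea and does go through (the components of $1_G$ in $\Xi_G\cap\Theta_G$ form an inverse system of finite connected graphs, so if they always contained $g_G$ their limit would connect $1$ to $g$ in $\widehat{\Xi}\cap\widehat{\Theta}$). Now replace the predissolver by a dissolver $H=H'\mathrel{\underset{A}{\times}}G\in\mathfrak{N}$ with $H\twoheadrightarrow G$, and set $\Xi_H:=\pi_H(\widehat{\Xi})$, $\Theta_H:=\pi_H(\widehat{\Theta})$. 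These are \emph{finite} connected subgraphs of $\Gamma(H)$ containing both $1_H$ and $g_H:=\pi_H(g)$, hence contain finite paths $1_H\to g_H$; let $u,v$ be their labels. Then $[u]_H=g_H=[v]_H$, while the projections of these paths to $\Gamma(G)$ run in $\Xi_G$ and $\Theta_G$ with $[u]_G=g_G=[v]_G$, contradicting that $H$ dissolves $(\Xi_G,g_G,\Theta_G)$. No finite path inside the profinite subgraphs is ever needed.
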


We turn to the Hall property. We first recall from \cite[Proposition 3.10]{supersolvable} the following necessary and sufficient condition on an $A$-generated profinite group to have a Cayley graph with the Hall property.
\begin{Thm} \label{CayleyHallsupersol} The Cayley graph $\Gamma(\wh{F_\mathfrak{N}})$ is Hall if and only if for each open subgroup $\mU$ of $\wh{F_\mathfrak{N}}$ and each edge $e$ of $\Gamma(\wh{F_\mathfrak{N}})$ the graph $\Gamma(\wh{F_\mathfrak{N}})\setminus \mU e^{\pm 1}$ is disconnected.
\end{Thm}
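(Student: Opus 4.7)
For the forward direction I would argue directly from the definitions: the set $\Gamma(\wh{F_\mathfrak{N}}) \setminus \mU e^{\pm 1}$ is a closed (hence profinite) subgraph of $\Gamma(\wh{F_\mathfrak{N}})$ that contains both endpoints $\iota e$ and $\tau e$ of $e$. If it were connected, applying the Hall property to it with the one-edge reduced path $e$ would force $e$ itself into the subgraph, contradicting its removal.

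For the converse I assume the displayed condition and argue contrapositively. Suppose the Hall property fails and pick a counterexample $(\Delta, u, v, \pi = e_1 \cdots e_n)$ with $\Delta$ a connected profinite subgraph, $u, v \in V(\Delta)$, and $\pi$ a reduced path in $\Gamma := \Gamma(\wh{F_\mathfrak{N}})$ from $u$ to $v$ having some edge outside $E(\Delta)$; take such a counterexample with $n$ minimal. A short minimality argument shows that every intermediate vertex $\tau e_j$ ($0 < j < n$) lies outside $V(\Delta)$, because otherwise splitting $\pi$ at such a vertex would yield two strictly shorter sub-paths with endpoints in $V(\Delta)$ that would be forced into $\Delta$ by minimality; hence every edge of $\pi$ lies outside $E(\Delta)$. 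Set $e := e_1$, so $\iota e = u \in V(\Delta)$. Since $E(\Delta)$ is closed in the profinite edge space and $e \notin E(\Delta)$, I may choose an open subgroup $\mU$ of $\wh{F_\mathfrak{N}}$ with $\mU e^{\pm 1} \cap E(\Delta) = \emptyset$, and by further shrinking $\mU$ arrange that among the edges $e_1, \dots, e_n$ of $\pi$, exactly the ones equal to $e$ or $e^{-1}$ (as directed edges of $\Gamma$) lie in $\mU e^{\pm 1}$.

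The crucial auxiliary step is the following lemma: whenever $\Gamma \setminus \mU e^{\pm 1}$ is disconnected, the two endpoints $\iota e$ and $\tau e$ lie in different connected components. I would prove this by a symmetry argument. The open subgroup $\mU$ acts on $\Gamma$ by left translation, preserves $\mU e^{\pm 1}$, and hence permutes the components of $\Gamma \setminus \mU e^{\pm 1}$; if $\iota e$ and $\tau e$ both lay in a common component $V_0$, then for every $x \in \mU$ the endpoints $x \iota e$ and $x \tau e$ of the removed edge $x e$ would both lie in $x V_0$, so every removed edge would have both endpoints in a single component. Adjoining all these edges back would then not merge any components, contradicting the connectedness of $\Gamma$.

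Passing to a sufficiently fine finite quotient $\Gamma(G)$ that both witnesses the disconnection and faithfully distinguishes the edges of $\pi$, the lemma yields distinct components $V_1 \ni \iota \bar e = \bar u$ and $V_2 \ni \tau \bar e$ of $\Gamma(G) \setminus U \bar e^{\pm 1}$, with $U := \pi_G(\mU)$ and $\bar e := \pi_G(e)$; since $\pi_G(\Delta)$ is connected and meets $V_1$ at $\bar u$, it sits entirely in $V_1$, and in particular $\bar v \in V_1$. By construction the only edges of the walk $\pi_G(\pi)$ crossing between components are $\bar e$ (traversing $V_1 \to V_2$) and $\bar e^{-1}$ (traversing $V_2 \to V_1$), and since this walk starts and ends in $V_1$ the numbers of crossings in each direction must coincide: $|J_e| = |J_{e^{-1}}|$, where $J_{e^{\pm 1}} := \{\, j : e_j = e^{\pm 1} \,\}$. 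Because $1 \in J_e$, some $i' \in J_{e^{-1}}$ exists. If $i' < n$, the tail $e_{i'+1} \cdots e_n$ is a strictly shorter reduced path from $u$ to $v$ whose edges lie outside $E(\Delta)$, contradicting the minimality of $\pi$. The delicate remaining case --- $|J_e| = |J_{e^{-1}}| = 1$ with $i' = n$ and hence $u = v$ --- is the main obstacle; I would resolve it by enlarging $\Delta$ to $\Delta' := \Delta \cup \{e, e^{-1}\}$, which remains a connected profinite subgraph (the adjoined edge joins $u \in V(\Delta)$ to the new vertex $v_1 := \tau e$), and observing that $\sigma := e_2 \cdots e_{n-1}$ is then a strictly shorter reduced closed path at $v_1 \in V(\Delta')$ all of whose edges lie outside $E(\Delta')$ --- using that $e^{\pm 1}$ occurs in $\pi$ only at positions $1$ and $n$ --- once again contradicting the minimality of $\pi$.
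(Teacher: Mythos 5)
The paper does not actually prove this statement --- it is recalled from \cite[Proposition 3.10]{supersolvable} --- so your argument is necessarily independent, and I believe it is essentially correct: the forward direction is immediate as you say, and your converse via a minimal counterexample, a symmetry lemma separating the endpoints of $e$, and a parity count of the crossings of the removed orbit $\mU e^{\pm 1}$ is sound, including the delicate closed-path case $u=v$, which you handle correctly by enlarging $\Delta$ by the single geometric edge $e^{\pm 1}$. Your ingredients closely parallel what the paper develops \emph{after} this theorem: the reduction to finite quotients leading to Theorem \ref{modifiedCayleyHall}, the equivalence (1)$\Leftrightarrow$(2) of Theorem \ref{disconnecting} (your separation lemma, proved there for normal $N$ via the dissolving formalism rather than by symmetry), and the signed-traversal counts of Corollary \ref{detecting edges}. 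Two points need tightening. First, state and prove the separation lemma for the finite graph $\Gamma(G)\setminus U\bar e^{\pm 1}$ rather than for $\Gamma(\wh{F_\mathfrak{N}})\setminus \mU e^{\pm 1}$: ``connected components'' of a profinite graph (connectedness here being the profinite notion) are not developed in the paper, whereas at the finite level your argument --- $U$ permutes the components, so if one removed edge had both endpoints in a single component then all would, and re-adjoining them could not reconnect $\Gamma(G)$ --- is complete, and has the virtue of working for non-normal $U$ (the paper first reduces to normal subgroups). Second, ``passing to a sufficiently fine finite quotient that witnesses the disconnection and faithfully distinguishes the edges of $\pi$'' hides the real content: you need a quotient $\pi_G:\wh{F_\mathfrak{N}}\twoheadrightarrow G$ with $\ker\pi_G\subseteq \mU$, for then $\pi_G^{-1}(U\bar e^{\pm 1})=\mU e^{\pm 1}$, so that $\pi_G\bigl(\Gamma(\wh{F_\mathfrak{N}})\setminus \mU e^{\pm 1}\bigr)=\Gamma(G)\setminus U\bar e^{\pm 1}$ and the distinguishing of the $e_j$ becomes automatic; that some such $G$ witnesses the disconnection is exactly what profinite disconnectedness of $\Gamma(\wh{F_\mathfrak{N}})\setminus\mU e^{\pm 1}$ gives (quotients with $\ker\pi_G\not\subseteq\mU$ remove no edges and are connected), and the property persists under refinement. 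The paper carries out this same reduction, for normal $\mU$, in the passage from Theorem \ref{CayleyHallsupersol} to Theorem \ref{modifiedCayleyHall}. With these two repairs your proof stands as a self-contained alternative to the cited one.
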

Suppose that $e=(g,a)$ for $g\in \wh{F_\mathfrak{N}}$ and $a\in \til A$; then $\mU e^{\pm 1}=\mU g(1,a)^{\pm 1}$ and
$$\Gamma(\wh{F_\mathfrak{N}})\setminus \mU e^{\pm 1}\cong g^{-1}\big(\Gamma(\wh{F_\mathfrak{N}})\setminus \mU g(1,a)^{\pm 1}\big)= \Gamma(\wh{F_\mathfrak{N}})\setminus g^{-1}\mU g(1,a)^{\pm 1}.$$
Since $\mU$ is open if and only if $g^{-1}\mU g$ is open, in Theorem \ref{CayleyHallsupersol} we may restrict ourselves to edges of the form $e=(1,a)$ where $a\in \til A$. Moreover, if $\mU\subseteq \mV$ are open subgroups and $\Gamma(\wh{F_\mathfrak{N}})\setminus \mU (1,a)^{\pm 1}$  is disconnected then so is $\Gamma(\wh{F_\mathfrak{N}})\setminus \mV (1,a)^{\pm 1}$; from this it follows that in Theorem \ref{CayleyHallsupersol} it suffices to consider open  {\emph{normal}} subgroups $\mU$. Let $\mU$ be an open normal subgroup of $\wh{F_\mathfrak{N}}$ for which $\Gamma(\wh{F_\mathfrak{N}})\setminus \mU (1,a)^{\pm 1}$ is disconnected  {and let $G:=\wh{F_\mathfrak{N}}/\mU$. There exists a finite quotient $H$ of $\wh{F_\mathfrak{N}}$ with $\wh{F_\mathfrak{N}}\overset{\vp}{\twoheadrightarrow}H\overset{\psi}{\twoheadrightarrow}G$ and such that
$$\vp(\Ga(\wh{F_\mathfrak{N}})\setminus \mU(1,a)^{\pm 1})=\Ga(H)\setminus N(1,a)^{\pm 1}$$
is disconnected for $N=\ker\psi$. Altogether we may modify Theorem \ref{CayleyHallsupersol} as follows.
\begin{Thm}\label{modifiedCayleyHall} The Cayley graph $\Gamma(\wh{F_\mathfrak{N}})$ is Hall if and only if each $G\in \mathfrak{N}$ admits $H\in \mathfrak{N}$, $H\overset{\psi}\twoheadrightarrow G$ such that for each $a\in \til A$ and $N=\ker \psi$ the graph $\Ga(H)\setminus N(1,a)^{\pm 1}$ is disconnected.
\end{Thm}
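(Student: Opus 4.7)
The plan is to deduce Theorem \ref{modifiedCayleyHall} from Theorem \ref{CayleyHallsupersol} by combining (i) the conjugation/monotonicity reductions laid out in the paragraph preceding the statement, (ii) a standard profinite compactness argument to convert infinite disconnection into finite disconnection, and (iii) the $PQ$-closure of $\mathfrak{N}$ under finite products to amalgamate the witnesses for the different letters $a\in\til A$.

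For the forward direction, assume $\Ga(\wh{F_\mathfrak{N}})$ is Hall and fix $G\in\mathfrak{N}$. Let $\mU=\ker(\wh{F_\mathfrak{N}}\twoheadrightarrow G)$, an open normal subgroup. Theorem \ref{CayleyHallsupersol}, applied in turn to each edge $(1,a)$ for $a\in\til A$, tells us that $\Ga(\wh{F_\mathfrak{N}})\setminus\mU(1,a)^{\pm 1}$ is disconnected. Since this subgraph is closed in the profinite graph $\Ga(\wh{F_\mathfrak{N}})=\varprojlim_{G'\in\mathfrak{N}}\Ga(G')$, it is itself the inverse limit of its images in the finite quotients. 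For $G'\in\mathfrak{N}$ with $G'\twoheadrightarrow G$, writing $N'=\ker(G'\twoheadrightarrow G)$, a direct computation (using that the projection is surjective and that $\mU$ is the preimage of $N'$) yields
\[
\operatorname{im}\bigl(\Ga(\wh{F_\mathfrak{N}})\setminus\mU(1,a)^{\pm 1}\bigr)=\Ga(G')\setminus N'(1,a)^{\pm 1}.
\]
By the usual profinite compactness argument, disconnection of the limit forces some such $\Ga(H_a)\setminus N_a(1,a)^{\pm 1}$ to be disconnected, with $H_a\in\mathfrak{N}$ and $H_a\twoheadrightarrow G$.

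To produce a single $H$ that works for all $a\in\til A$ simultaneously --- which is the one nontrivial step --- I would set $H:=\prod_{a\in\til A}^{A}H_a$, the product in the category of $A$-generated groups. Being a $PQ$-category, $\mathfrak{N}$ contains $H$, and the canonical morphisms $H\twoheadrightarrow H_a\twoheadrightarrow G$ all agree with $\psi:H\twoheadrightarrow G$. The induced surjective graph morphism $\Ga(H)\twoheadrightarrow\Ga(H_a)$ maps the orbit $N(1,a)^{\pm 1}$ onto $N_a(1,a)^{\pm 1}$ (with $N=\ker\psi$) and the complement onto the complement, so we obtain a continuous surjection $\Ga(H)\setminus N(1,a)^{\pm 1}\twoheadrightarrow \Ga(H_a)\setminus N_a(1,a)^{\pm 1}$; since the target is disconnected, so is the source, for every $a$.

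For the backward direction, the reductions in the paragraph following Theorem \ref{CayleyHallsupersol} already reduce the Hall criterion to showing that $\Ga(\wh{F_\mathfrak{N}})\setminus \mU(1,a)^{\pm 1}$ is disconnected for every open normal subgroup $\mU$ and every $a\in\til A$. Given such $\mU$, let $G:=\wh{F_\mathfrak{N}}/\mU\in\mathfrak{N}$ and pick $H\overset{\psi}{\twoheadrightarrow}G$ in $\mathfrak{N}$ as furnished by the hypothesis. Composing $\wh{F_\mathfrak{N}}\overset{\vp}{\twoheadrightarrow} H$ with $\psi$ yields the canonical map $\wh{F_\mathfrak{N}}\twoheadrightarrow G$, so $\vp^{-1}(N)=\mU$ and the induced graph morphism restricts to a continuous surjection $\Ga(\wh{F_\mathfrak{N}})\setminus\mU(1,a)^{\pm 1}\twoheadrightarrow \Ga(H)\setminus N(1,a)^{\pm 1}$. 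Disconnection of the target (by hypothesis) transfers to the source, completing the argument via Theorem \ref{CayleyHallsupersol}. The only conceptual work is the product construction in the forward direction; everything else is bookkeeping about images and preimages of edge orbits.
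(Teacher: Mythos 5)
Your proof is correct and follows essentially the same route as the paper: the conjugation and monotonicity reductions to edges $(1,a)$ and open normal subgroups, the inverse-limit/compactness descent of disconnection to a finite quotient, and then the $A$-generated product to obtain a single $H$ serving all letters simultaneously (a step the paper leaves implicit). No gaps.
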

}

Next, we are going to analyze graphs of the form $\Gamma(H)\setminus N(1,a)^{\pm 1}$ where $H\in\mathfrak{N}$ and $N$ is some normal subgroup of $H$. For a finite group $G$ with Cayley graph $\Gamma(G)$ and $a\in \til A$ we consider the constellation
$$\Delta_a:=(\Gamma(G)\setminus (1,a)^{\pm 1}, a, \{1,(1,a)^{\pm 1},a\}).$$
\begin{Thm}\label{disconnecting} Let $a\in \til A$ and $\vp:H\twoheadrightarrow G$ with $N=\ker \vp$; then the following assertions are equivalent:
\begin{enumerate}
\item $\Gamma(H)\setminus N(1,a)^{\pm 1}$ is disconnected
\item $1$ and $a$ are in distinct connected components of $\Gamma(H)\setminus N(1,a)^{\pm 1}$
\item for all $n\in N$, $n$ and $na$ are in distinct connected components of $\Gamma(H)\setminus N(1,a)^{\pm 1}$
\item $H$ dissolves the constellation $\Delta_a$.
\end{enumerate}
\end{Thm}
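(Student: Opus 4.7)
The plan is to establish the cyclic chain (3) $\Rightarrow$ (2) $\Rightarrow$ (1) $\Rightarrow$ (3) together with the separate equivalence (2) $\Leftrightarrow$ (4); the driving tool throughout is the action of $N$ on $\Gamma(H)$ by left multiplication.

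Two of the cyclic implications are immediate: (3) $\Rightarrow$ (2) by taking $n = 1$, and (2) $\Rightarrow$ (1) since two vertices in distinct components witness disconnectedness. The heart of the cycle is (1) $\Rightarrow$ (3). I would first observe that, for every $n \in N$, the left translation $x \mapsto nx$ is a graph automorphism of $\Gamma(H)$ that carries the edge set $N(1,a)^{\pm 1}$ onto itself (since $n \cdot (n',a) = (nn', a)$ with $nn' \in N$), hence restricts to an automorphism of $\Gamma(H) \setminus N(1,a)^{\pm 1}$. Assuming towards contradiction that (3) fails, some $n_0 \in N$ has $n_0$ and $n_0 a$ in a common component $C$; then for every $n \in N$ the translate $nn_0^{-1} C$ is a component containing both $n$ and $na$. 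So the two endpoints of every removed edge $(n, a)$ lie in one and the same component of $\Gamma(H) \setminus N(1,a)^{\pm 1}$. Because $\Gamma(H)$ itself is connected, an arbitrary path between two vertices can be rerouted — by replacing each traversal of a removed edge with a detour inside its component — into one that avoids $N(1,a)^{\pm 1}$. Hence $\Gamma(H) \setminus N(1,a)^{\pm 1}$ is connected, contradicting (1).

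For (2) $\Leftrightarrow$ (4) I would unfold Definition \ref{dissolve constellation} applied to $\Delta_a$. Any path from $1$ to $a$ contained in $\Theta = \{1, (1,a)^{\pm 1}, a\}$ is labeled by a word in $\{a, a^{-1}\}^\ast$ that freely reduces to $a$, so every admissible $v$ automatically satisfies $[v]_H = a$. Accordingly, (4) is equivalent to the non-existence of a word $u$ with $[u]_G = a$, $[u]_H = a$, and whose $\Gamma(G)$-path from $1$ runs in $\Xi = \Gamma(G) \setminus (1,a)^{\pm 1}$. The canonical graph morphism $\Gamma(H) \twoheadrightarrow \Gamma(G)$ lifts labeled paths starting at $1$, and its fibre over the geometric edge $(1,a)^{\pm 1}$ is precisely $N(1,a)^{\pm 1}$; therefore such a word $u$ exists if and only if there is a path in $\Gamma(H)$ from $1$ to the vertex $a$ that avoids $N(1,a)^{\pm 1}$, that is, if and only if (2) fails. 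The only substantive step is the rerouting argument in (1) $\Rightarrow$ (3); everything else is a direct unfolding of the definitions once the $N$-translation symmetry has been identified.
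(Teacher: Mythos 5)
Your proof is correct, and it is logically complete: the cycle $(3)\Rightarrow(2)\Rightarrow(1)\Rightarrow(3)$ together with $(2)\Leftrightarrow(4)$ does yield the equivalence of all four statements. The route differs from the paper's in its decomposition: the paper proves $(4)\Rightarrow(3)$ and $(1)\Rightarrow(4)$, so the rerouting argument there is fed by a path coming from the \emph{failure of dissolving} (a word $w$ with $[w]_H=a$ whose $\Gamma(G)$-path avoids $(1,a)^{\pm 1}$, lifted to $\Gamma(H)$), whereas you feed the same rerouting argument with a path coming from the failure of $(3)$ itself, making $(1)\Leftrightarrow(2)\Leftrightarrow(3)$ a purely graph-theoretic statement about deleting an $N$-orbit of edges from an $N$-vertex-transitive graph; the "dissolving" content is then isolated in the separate dictionary $(2)\Leftrightarrow(4)$, which rests on the observation that every admissible $v$ reduces to $a$ and that the fibre of $(1,a)^{\pm 1}$ under $\Gamma(H)\twoheadrightarrow\Gamma(G)$ is exactly $N(1,a)^{\pm 1}$. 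The ingredients (left-translation invariance of $\Gamma(H)\setminus N(1,a)^{\pm 1}$, the detour substitution, and the path-lifting correspondence) are the same in both proofs, so this is a reorganization rather than a new idea, but your split is arguably cleaner in that it separates the combinatorics from the definition-unfolding. One small point you gloss over in the backward direction of $(2)\Leftrightarrow(4)$: a path in $\Gamma(H)$ avoiding $N(1,a)^{\pm 1}$ is labeled by a possibly unreduced word, and you need that the path of its free reduction traverses only a subset of the original geometric edges (or, equivalently, that you may take the path reduced from the outset); this is standard but worth a sentence.
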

\begin{proof} The implications (3) $\Rightarrow$ (2) $\Rightarrow$ (1) are trivial. In order to show (4) $\Rightarrow$ (3) let $n\in N$ and suppose that there is a word $w\in F$ labeling a path $n\to na$ which runs in $\Gamma(H)\setminus N(1,a)^{\pm 1}$. Then $[w]_H=a$, and $w$ also labels a path $1\to a$ which runs in
$$n^{-1}(\Gamma(H)\setminus N(1,a)^{\pm 1})=n^{-1}\Gamma(H)\setminus n^{-1}N(1,a)^{\pm 1} = \Gamma(H)\setminus N(1,a)^{\pm 1}.$$
The projection under $\vp$ of that path is a path $1\to a$ in $\Gamma(G)$ which runs in $\Gamma(G)\setminus(1,a)^{\pm 1}$. However, since $H$ dissolves $\Delta_a$ this is not possible, leading to a contradiction.

We are left with showing the implication (1) $\Rightarrow$ (4). We show that if $H$ does not dissolve $\Delta_a$ then $\Gamma(H)\setminus N(1,a)^{\pm 1}$ is connected. So, suppose that $H$ does not dissolve $\Delta_a$. There exists a word $w\in F$ which labels a path $1\to a$ which runs inside $\Gamma(G)\setminus (1,a)^{\pm 1}$ and such that $[w]_H=a$. In particular, $w$ labels a path $1\to a$ in $\Gamma(H)$ and that path does not traverse any edge of $N(1,a)^{\pm 1}$ because its projection to $\Gamma(G)$ avoids the edge $(1,a)^{\pm 1}$. In particular, there is a path $\pi_a:1\to a$ in $\Gamma(H)\setminus N(1,a)^{\pm 1}$. Let $n\in N$; the shifted path $n\pi_a$ runs from $n$ to $na$ and runs inside $n(\Gamma(H)\setminus N(1,a)^{\pm 1})=\Gamma(H)\setminus N(1,a)^{\pm 1}$. Let now $u,v$ be any two vertices of $\Gamma(H)$ and let $\pi$ be any path $u\to v$. Whenever $\pi$ traverses an edge $(n,na)^{\pm 1}\in N(1,a)^{\pm 1}$ then replace it by the path $(n\pi_a)^{\pm 1}$. This yields a path from $u$ to $v$ which runs entirely in $\Gamma(H)\setminus N(1,a)^{\pm 1}$. Altogether, $\Gamma(H)\setminus N(1,a)^{\pm 1}$ is connected, as requested.
\end{proof}

We are motivated to modify Definition \ref{dissolver}.
\begin{Def}\label{weak dissolver} Let $G$ be a finite $A$-generated group.
\begin{enumerate}
\item an $A$-generated group $H$ is a \emph{weak predissolver of} $G$ if $H$ dissolves the constellation $\Delta_a$ of $\Gamma(G)$ for every $a\in \til A$;
\item a weak predissolver $H$ of $G$ which in addition satisfies $H\twoheadrightarrow G$ is a \emph{weak dissolver of $G$}.
\end{enumerate}
\end{Def}
Again, if $H$ is a weak predissolver of $G$ then $H\mathrel{\underset{A}{\times}}G$ is a weak dissolver. We now are able to formulate an analogue of Theorem \ref{Cayley:tree-like};  {it is an immediate consequence of Theorems \ref{modifiedCayleyHall} and \ref{disconnecting}.}
\begin{Thm}\label{Cayley:Hall} Let $\mathfrak{N}$ be a $PQ$-category; then the Cayley graph $\Gamma(\wh{F_\mathfrak{N}})$ is Hall if and only if every $G\in\mathfrak{N}$ has a weak (pre)dissolver in $\mathfrak{N}$.
\end{Thm}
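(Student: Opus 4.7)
The plan is to obtain Theorem \ref{Cayley:Hall} by a direct two-way reduction, in each direction combining Theorem \ref{modifiedCayleyHall} with the equivalence (1) $\Leftrightarrow$ (4) from Theorem \ref{disconnecting}, and invoking the $PQ$-closure properties of $\mathfrak{N}$ to pass between ``predissolver'' and ``dissolver'' when needed.

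For the forward direction, I would assume $\Gamma(\wh{F_\mathfrak{N}})$ is Hall and fix an arbitrary $G \in \mathfrak{N}$. Theorem \ref{modifiedCayleyHall} supplies $H \in \mathfrak{N}$ and a canonical surjection $\psi: H \twoheadrightarrow G$ with kernel $N$ such that for every letter $a \in \til A$ the graph $\Gamma(H) \setminus N(1,a)^{\pm 1}$ is disconnected. Applying the implication (1) $\Rightarrow$ (4) of Theorem \ref{disconnecting} separately for each $a$, I conclude that $H$ dissolves the constellation $\Delta_a$ for every $a \in \til A$; in other words, $H$ is a weak predissolver of $G$, and since $H \twoheadrightarrow G$ it is in fact a weak dissolver lying in $\mathfrak{N}$.

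For the converse, I would start from an arbitrary $G \in \mathfrak{N}$ and a weak predissolver $H_0 \in \mathfrak{N}$ of $G$. Since $\mathfrak{N}$ is a $PQ$-category it is closed under $A$-generated products and under quotients, so $H := H_0 \mathrel{\underset{A}{\times}} G$ belongs to $\mathfrak{N}$ and by the observation preceding the theorem it is a weak dissolver of $G$; in particular the canonical morphism $\psi: H \twoheadrightarrow G$ exists in $\mathfrak{N}$, and $H$ dissolves $\Delta_a$ for each $a \in \til A$. Applying the implication (4) $\Rightarrow$ (1) of Theorem \ref{disconnecting} with $N = \ker\psi$, I obtain that $\Gamma(H) \setminus N(1,a)^{\pm 1}$ is disconnected for every $a \in \til A$. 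Since this holds for an arbitrary $G \in \mathfrak{N}$, Theorem \ref{modifiedCayleyHall} yields that $\Gamma(\wh{F_\mathfrak{N}})$ is Hall.

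There is essentially no technical obstacle: the theorem is a packaging result, and the only subtlety is to make sure the auxiliary group produced by Theorem \ref{modifiedCayleyHall} (respectively, the product used to upgrade a predissolver to a dissolver) indeed belongs to $\mathfrak{N}$, which is where the $PQ$-properties enter. If anything is delicate, it is merely to notice that the single $H$ provided by Theorem \ref{modifiedCayleyHall} must work simultaneously for every $a \in \til A$, which matches exactly the quantifier structure of Definition \ref{weak dissolver}(1) and therefore causes no trouble in either direction.
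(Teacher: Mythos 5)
Your proof is correct and is exactly the argument the paper intends: the paper itself dispatches Theorem \ref{Cayley:Hall} as ``an immediate consequence of Theorems \ref{modifiedCayleyHall} and \ref{disconnecting},'' and your two-way reduction via the equivalence (1) $\Leftrightarrow$ (4), together with the product trick $H_0\mathrel{\underset{A}{\times}}G$ to upgrade a weak predissolver to a weak dissolver inside $\mathfrak{N}$, fills in precisely the intended details.
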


In \cite[Theorem 3.12]{supersolvable} an alternative characterization  {is given of when $\wh{F_\mathfrak{N}}$ has a Hall Cayley graph, in terms of irredundant generating sets of the open subgroups of $\wh{F_\mathfrak{N}}$.} From this, a seemingly unrelated property, introduced by Lubotzky and van den Dries \cite{LubotzkyDries} for finitely generated profinite groups, came into play which turned out to be a sufficient condition for the Hall property.
\begin{Def}\label{def:freely indexed}
The profinite group $\wh{F_\mathfrak{N}}$ is \emph{$A$-freely indexed} if for each open subgroup $\mU$ the rank $d(\mU)$ is given by the Schreier formula:
\begin{equation}\label{Schreier formula}
d(\mU)=[\wh{F_\mathfrak{N}}:\mU](\vert A\vert -1)+1.
\end{equation}
\end{Def}
Note that for $\mU=\wh{F_\mathfrak{N}}$ one immediately has that $d(\wh{F_\mathfrak{N}})=\vert A\vert$.
%The definition may be applied to arbitrary $A$-generated profinite groups, however it is known that in any such group, the %abstract subgroup generated by $A$ must be $F$ \cite[Theorem 3.7]{supersolvable}; hence the above formulation of the %definition (just for $\wh{F_\mathfrak{N}}$) is not really a restriction.
The following has been shown in \cite[Corollary 3.13]{supersolvable}.
\begin{Prop} The Cayley graph $\Gamma(\wh{F_\mathfrak{N}})$ of every freely indexed profinite group $\wh{F_\mathfrak{N}}$ is Hall.
\end{Prop}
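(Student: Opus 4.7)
The plan is to invoke Theorem~\ref{Cayley:Hall}: every $G\in\mathfrak{N}$ must be shown to admit a weak predissolver in $\mathfrak{N}$. By Theorem~\ref{disconnecting} together with the discussion preceding Theorem~\ref{modifiedCayleyHall}, it suffices to find, for each $a\in\til A$, a group $H_a\in\mathfrak{N}$ with surjection $H_a\twoheadrightarrow G$ and kernel $N_a$ such that $\Gamma(H_a)\setminus N_a(1,a)^{\pm 1}$ is disconnected. Given such $H_a$, the iterated $A$-product $H$ of all the $H_a$ (over $a\in\til A$) lies in $\mathfrak{N}$ (since $\mathfrak{N}$ is a $PQ$-category), surjects to each $H_a$ and hence to $G$, and dissolves each $\Delta_a$ (because $H_a$ does and $H\twoheadrightarrow H_a$), so it is the desired weak dissolver.

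Next, I would reduce the required disconnectedness to an algebraic condition on Schreier generators via a covering-space analysis of $p:\Gamma(H_a)\to\Gamma(G)$. If $(1,a)^{\pm 1}$ is a bridge of $\Gamma(G)$, one may take $H_a:=G$. Otherwise, choose a spanning tree $T$ of $\Gamma(G)$ avoiding $(1,a)$; then the geometric fundamental group $\pi_1(\Gamma(G),1)$ is free on a Schreier basis $\{s_e\}\subseteq F$ indexed by positively oriented non-tree edges $e$, and I single out the generator $s_a$ corresponding to $(1,a)$. A standard argument identifies the intersection of the component of $1$ in $p^{-1}(\Gamma(G)\setminus(1,a)^{\pm 1})$ with the fibre $N_a$ over $1$: it equals the subgroup $K:=\langle[s_e]_{H_a}:e\ne(1,a)\rangle$ of $N_a$, because a word $w\in F$ labels a closed path at $1$ in $\Gamma(G)$ avoiding $(1,a)^{\pm 1}$ if and only if it is a product of Schreier generators other than $s_a$. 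Hence $\Gamma(H_a)\setminus N_a(1,a)^{\pm 1}$ is disconnected if and only if $K\ne N_a$, equivalently $[s_a]_{H_a}\notin K$.

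At this point I would invoke the freely indexed hypothesis. Set $\mU_G:=\ker(\wh{F_\mathfrak{N}}\twoheadrightarrow G)$. The Schreier basis $\{s_e\}$ topologically generates $\mU_G$ (because $\ker(F\twoheadrightarrow G)$ is dense in $\mU_G$), and its cardinality $|G|(|A|-1)+1$ equals $d(\mU_G)$ by hypothesis. Hence $\{s_e\}$ is a topological generating set of minimum size, so that $s_a$ cannot lie in the closed subgroup $C:=\overline{\langle s_e:e\ne(1,a)\rangle}$ of $\mU_G$ --- otherwise $\mU_G$ would be topologically generated by the remaining $|G|(|A|-1)$ elements, contradicting freely indexed. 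By Hausdorffness of $\wh{F_\mathfrak{N}}$, choose an open normal subgroup $\mU'\unlhd\wh{F_\mathfrak{N}}$ with $\mU'\subseteq\mU_G$ and $s_a\notin C\cdot\mU'$; the finite quotient $H_a:=\wh{F_\mathfrak{N}}/\mU'$ lies in $\mathfrak{N}$ (every continuous finite quotient does), surjects to $G$, and satisfies $[s_a]_{H_a}\notin\langle[s_e]_{H_a}:e\ne(1,a)\rangle$, completing the construction.

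The main technical hurdle is the covering-space reduction of the second paragraph, identifying the components of $\Gamma(H_a)\setminus N_a(1,a)^{\pm 1}$ with cosets of the Schreier-type subgroup $K$ in $N_a$. Once this translation is established, the remainder is essentially algebraic: freely indexedness forbids any Schreier generator from being topologically redundant, and profinite Hausdorffness upgrades this non-redundancy to the required separation inside a finite quotient in $\mathfrak{N}$.
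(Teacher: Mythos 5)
Your argument is correct. Note that the paper itself offers no proof of this Proposition --- it cites \cite[Corollary 3.13]{supersolvable} and describes the route there as passing through a characterization of the Hall property ``in terms of irredundant generating sets of the open subgroups of $\wh{F_\mathfrak{N}}$''; what you have written is essentially a self-contained reconstruction of that proof inside the present paper's framework. Your covering-space computation shows that the component of $1$ in $\Gamma(H_a)\setminus N_a(1,a)^{\pm 1}$ meets the fibre $N_a$ exactly in the subgroup generated by the Schreier generators other than $s_a$, so disconnectedness is precisely irredundancy of $s_a$ in the Schreier generating set of $N_a$; the freely indexed hypothesis makes the Schreier basis of $\mU_G$ a topological generating set of minimal size $d(\mU_G)=\vert G\vert(\vert A\vert-1)+1$, hence no $s_e$ is topologically redundant, and Hausdorffness transfers this to a finite quotient in $\mathfrak{N}$ (which does lie in $\mathfrak{N}$, since every open normal subgroup of $\wh{F_\mathfrak{N}}$ contains a basic one). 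Two cosmetic points: (i) it is cleaner to run the final reduction through conditions (1)/(2) of Theorem \ref{disconnecting} --- disconnectedness passes up along surjections $H\twoheadrightarrow H_a$ because the preimage of $N_a(1,a)^{\pm 1}$ in $\Gamma(H)$ is $N(1,a)^{\pm 1}$ --- rather than through ``dissolving $\Delta_a$'', since for $[a]_G=1$ the triple $\Delta_a$ degenerates and is not literally a constellation; (ii) for $a\in A^{-1}$ the distinguished Schreier generator is the one attached to the positive edge $([a]_G,a^{-1})$, which changes nothing. Neither point affects the substance.
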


Whether $\wh{F_\mathfrak{N}}$ is freely indexed can be easily expressed in terms of the corresponding Hausdorff filtered family $\sfN$ and likewise in terms of the associated relation-free $PQ$-category $\mathfrak{N}$.
\begin{Prop}\label{char:freely indexed} Let $\sfN$ be a Hausdorff filtered family of finite index normal subgroups of $F$ and $\mathfrak{N}$ be the corresponding relation-free $PQ$-category. Then the following are equivalent:
\begin{enumerate}
\item $\wh{F_\mathfrak{N}}$ is freely indexed
\item every $N\in \sfN$ admits an $L\in \sfN$ with $L\le N$ and $d(N)=d(N/L)$
\item every $G\in \mathfrak{N}$ admits $H\in\mathfrak{N}$ with $H\twoheadrightarrow G$ and
$$d(\ker(H\twoheadrightarrow G))=\vert G\vert(\vert A\vert-1)+1.$$
\end{enumerate}
\end{Prop}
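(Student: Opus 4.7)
My plan is to prove the three-way equivalence by first verifying $(2)\Leftrightarrow(3)$ as a pure translation between $\sfN$ and $\mathfrak{N}$, and then establishing $(1)\Leftrightarrow(2)$ by identifying the topological rank of open subgroups of $\wh{F_\mathfrak{N}}$ with a supremum of abstract ranks of finite quotients of the corresponding subgroups of $F$.

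For $(2)\Leftrightarrow(3)$, I will use the bijection $N\leftrightarrow G=F/N$, $L\leftrightarrow H=F/L$: the relation $L\le N$ then corresponds to the existence of the canonical morphism $H\twoheadrightarrow G$, whose kernel is $N/L$. Since $F$ is free, the Nielsen--Schreier formula gives $d(N)=[F:N](\vert A\vert-1)+1=\vert G\vert(\vert A\vert-1)+1$, so the equation $d(N)=d(N/L)$ is equivalent to $d(N/L)=\vert G\vert(\vert A\vert-1)+1$, which is condition (3).

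The heart of the argument is the formula
$$d(\mU)=\sup\{d(H/L)\mid L\in\sfN,\ L\le H\}$$
valid for every open subgroup $\mU\le\wh{F_\mathfrak{N}}$, where $H:=\pi^{-1}(\mU)\le F$ under the natural embedding $\pi\colon F\hookrightarrow\wh{F_\mathfrak{N}}$. The inequality $d(\mU)\ge d(H/L)$ holds because the image of $\mU$ in the finite quotient $F/L$ equals $HL/L=H/L$. For the reverse, I will invoke the standard fact that for a finitely generated profinite group $d(\mU)=\sup d(\mU/V)$ over open normal $V\unlhd\mU$, and observe that any such $V$ contains its $\wh{F_\mathfrak{N}}$-core (a finite intersection of $\wh{F_\mathfrak{N}}$-conjugates), which is open normal in $\wh{F_\mathfrak{N}}$ and therefore corresponds to some $L\in\sfN$; here condition~(1) of Definition~\ref{Hausdorff filtered} (closure under finite intersections) is what makes this work.

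Granted this formula, $(1)\Rightarrow(2)$ is immediate: for $N\in\sfN$ and the corresponding open normal $\mU$, condition (1) forces $d(\mU)=[F:N](\vert A\vert-1)+1=d(N)$, and the integer-valued supremum attains its upper bound at some $L\in\sfN$ with $L\le N$ and $d(N/L)=d(N)$. For $(2)\Rightarrow(1)$ I will pick any open $\mU$, let $H=\pi^{-1}(\mU)$, and pass to the core $H_F=\bigcap_{g\in F}gHg^{-1}$, which is normal in $F$ and contains some $N\in\sfN$, hence lies in $\sfN$ by condition (2) of Definition~\ref{Hausdorff filtered}. Applying (2) to $H_F$ produces $L\in\sfN$, $L\le H_F\le H$, with $d(H_F/L)=d(H_F)=[F:H][H:H_F](\vert A\vert-1)+1$. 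The Reidemeister--Schreier bound applied to the finite pair $H_F/L\le H/L$ of index $[H:H_F]$ yields
$$d(H_F/L)-1\le[H:H_F]\bigl(d(H/L)-1\bigr),$$
and substitution forces $d(H/L)\ge[F:H](\vert A\vert-1)+1=d(H)$. Combined with the trivial $d(H/L)\le d(H)$ this gives $d(H/L)=d(H)$; the key formula then delivers $d(\mU)\ge d(H)$, while $d(\mU)\le d(H)$ holds since $\mU$ is the closure of $\pi(H)$. Hence $d(\mU)=d(H)=[\wh{F_\mathfrak{N}}:\mU](\vert A\vert-1)+1$, proving (1). The main obstacle is precisely this step, because (2) directly gives information only about open \emph{normal} subgroups of $\wh{F_\mathfrak{N}}$; the Reidemeister--Schreier inequality is the essential quantitative tool that transports the rank information from $H_F/L$ back to $H/L$ and closes the gap between the two formulations.
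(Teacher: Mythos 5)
Your argument is correct, and its overall strategy coincides with the paper's: both reduce the problem to open \emph{normal} subgroups of $\wh{F_\mathfrak{N}}$ and then exploit the fact that the rank of an inverse limit $\varprojlim N/L$ equals the supremum of the ranks $d(N/L)$, which (being an integer bounded by $d(N)$) is attained. The one genuine difference is how the reduction to normal subgroups is handled: the paper simply cites Lubotzky--van den Dries \cite[Lemma 2.5 (ii)]{LubotzkyDries} for the statement that it suffices to verify the Schreier formula on open normal subgroups, whereas you prove this reduction from scratch --- passing from an arbitrary open $\mU$ with $H=\pi^{-1}(\mU)$ to the core $H_F\in\sfN$, and then using the finite Reidemeister--Schreier inequality $d(H_F/L)-1\le[H:H_F]\bigl(d(H/L)-1\bigr)$ to push the rank information back up from $H_F/L$ to $H/L$. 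This makes your write-up self-contained (and in effect reproves the cited lemma in this special case), at the cost of some extra length; all the individual steps --- $H_F\in\sfN$ via condition (2) of Definition \ref{Hausdorff filtered}, the identification $d(H)=[F:H](\vert A\vert-1)+1$ by Nielsen--Schreier, the inequality $d(\mU)\le d(H)$ from density, and the formula $d(\mU)=\sup_L d(H/L)$ --- check out.
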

\begin{proof} It is easy to check that conditions (2) and (3) are equivalent. For the equivalence of (1) and (2) one can use that in order to be freely indexed it suffices to check that the Schreier formula \ref{Schreier formula} holds for every open normal subgroup $\mU$ of $\wh{F_\mathfrak{N}}$ \cite[Lemma 2.5 (ii)]{LubotzkyDries}. So, let $\mU$ be open and normal and $U:=\mU\cap F$ where $F$ is the abstract subgroup of $\wh{F_\mathfrak{N}}$ generated by $A$; then $U\in \sfN$ and the formula \ref{Schreier formula} holds for $\mU$ if and only if $d(\mU)=d(U)$. But $\mU=\varprojlim U/N$ where the limit is taken over all $N\in \sfN$ for which $N\le U$. Since $d(U)=d(\varprojlim U/N)$ if and only if $d(U)=d(U/N)$ for some $N$ the claim follows.
\end{proof}
Finally, the existence of universal $S$-extensions provides a sufficient condition for the existence of dissolvers.
For $S$ a cyclic group of prime order the following is already contained in \cite{geometry0} (in a different language), the general case is \cite[Theorem 4.5]{geometry1}.
\begin{Prop}\label{universalSimpliesdissolver} For every $A$-generated group $G$ and every finite simple group $S$, the universal $S$-extension $G^{A,S}$ is a dissolver of $G$.
\end{Prop}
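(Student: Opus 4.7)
Since $G^{A,S}=F/R(S)$ with $R(S)\subseteq R:=\ker(F\twoheadrightarrow G)$, the surjection $G^{A,S}\twoheadrightarrow G$ is built into the construction, and only the predissolver condition needs verification. Fix a constellation $(\Xi,g,\Theta)$ of $\Gamma(G)$ and words $u,v\in F$ with $[u]_G=[v]_G=g$ such that $u$ labels a path from $1$ inside $\Xi$ and $v$ one inside $\Theta$. By the universal property of $G^{A,S}$ it suffices to produce \emph{any} $A$-generated group $H\twoheadrightarrow G$ whose kernel is a power of $S$ and in which $[u]_H\neq[v]_H$: the resulting canonical morphism $G^{A,S}\twoheadrightarrow H$ then transfers the separation back to $G^{A,S}$.

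The combinatorial input is extracted from the constellation. Let $D\subseteq V(\Gamma(G))$ be the connected component of $1$ in $\Xi\cap\Theta$; by definition of a constellation $g\notin D$. Any edge of $\Xi\cap\Theta$ joins endpoints in a common component of $\Xi\cap\Theta$, so no such edge crosses $\partial D$; in particular, every $\partial D$-crossing edge traversed by $u$ lies in $\Xi\setminus\Theta$. Telescoping the $\mathbb{Z}$-valued characteristic function $\chi$ of $V(\Gamma(G))\setminus D$ along the $u$-path gives
$$\sum_{e\in E}n_u(e)\bigl(\chi(\tau e)-\chi(\iota e)\bigr)=\chi(g)-\chi(1)=1,$$
where $n_u(e)\in\mathbb{Z}$ is the signed traversal count of $u$ at the positive edge $e$; edges outside $\Xi$ and edges in $\Xi\cap\Theta$ contribute zero, so some $e^{*}\in\Xi\setminus\Theta$ has $n_u(e^{*})\neq 0$, while automatically $n_v(e^{*})=0$ as $v$ stays in $\Theta$.

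For $S=C_p$ this closes the argument via the explicit Gasch\"utz model: take $H=G^{A,\mathbb{Z}/p}\hookrightarrow\mathbb{F}_p[E]\rtimes G$, in which $[w]_H=\bigl(\sum_e n_w(e)\,e,\,[w]_G\bigr)$; reducing the telescoping identity modulo $p$ keeps its right-hand side equal to $1\neq 0$ in $\mathbb{F}_p$, so some $e^{*}\in\Xi\setminus\Theta$ satisfies $n_u(e^{*})\not\equiv 0\pmod p$, whence the $\mathbb{F}_p[E]$-components of $[u]_H$ and $[v]_H$ differ at $e^{*}$.

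For general finite simple $S$ one constructs $H$ inside the wreath product $S\wr_V G=S^V\rtimes G$: fix a spanning tree $T$ of $\Gamma(G)$ and an edge-labelling $\lambda\colon E\to S$ supported on non-tree edges, with $\lambda(e^{*})=s\in S$ of order not dividing $n_u(e^{*})$ and the remaining non-tree labels chosen so that $\{\lambda(e):e\notin T\}$ generates $S$; set $f_a(h):=\lambda(h,a)$ and $H:=\langle(f_a,a):a\in A\rangle$. Then $H\twoheadrightarrow G$ and the base-group kernel $H\cap S^V$, projecting onto $S$ at each coordinate by the generation assumption, is subdirect and hence --- by the subdirect-to-direct fact for simple $S$ recalled in Subsection~\ref{universal S extension} --- a direct power of $S$. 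The main obstacle is to verify $[u]_H\neq[v]_H$: the $S$-valued monodromy of $c:=uv^{-1}\in R$ over the basepoint computes $[u]_H\cdot[v]_H^{-1}$ in the base group, and a compatible combinatorial arrangement of $T$ together with the auxiliary labels (worked out in \cite[Theorem~4.5]{geometry1}) forces this monodromy to equal $s^{n_u(e^{*})}\neq 1$ rather than cancel via non-abelian interactions, thereby completing the reduction.
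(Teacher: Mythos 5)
The paper does not actually prove this proposition; it cites \cite{geometry0} for $S=C_p$ and \cite[Theorem 4.5]{geometry1} for the general case, so your attempt must be judged on its own. Your reduction via the universal property is sound (a separate witness $H$ per pair $(u,v)$ suffices, since each separation pulls back along $G^{A,S}\twoheadrightarrow H$), and your treatment of $S=C_p$ is complete and correct: the telescoping of the characteristic function of $V(\Gamma(G))\setminus D$ along the $u$-path is exactly the border-crossing identity the paper itself records before Corollary \ref{detecting edges}, and combined with the explicit Gasch\"utz model it does yield $[u]_{G^{A,\mathbb{Z}/p}}\neq[v]_{G^{A,\mathbb{Z}/p}}$. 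This is, incidentally, the only case the paper actually uses (Section \ref{solvable} invokes the proposition solely for $S=C_p$).

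For non-abelian simple $S$, however, your argument has a genuine gap, and it sits precisely where the real difficulty of the proposition lies. First, a quantifier problem: having fixed $e^{*}$ with $n_u(e^{*})\neq 0$ over $\mathbb{Z}$, an element $s\in S$ of order not dividing $n_u(e^{*})$ need not exist (take $n_u(e^{*})$ divisible by the exponent of $S$); you must instead first fix a prime $q\mid|S|$ and rerun the telescoping identity modulo $q$ to produce an $e^{*}$ with $q\nmid n_u(e^{*})$. Second, and more seriously, the two requirements on the labelling $\lambda$ pull against each other: if only $e^{*}$ carries a nontrivial label $s$, the base kernel $H\cap S^V$ is a subdirect power of $\langle s\rangle$, not of $S$, so the universal property of $G^{A,S}$ does not apply to $H$; whereas once the other non-tree labels are chosen to generate $S$, the monodromy of $uv^{-1}$ is an \emph{ordered} product of the labels of all non-tree edges traversed, and in a non-abelian group nothing prevents this product from being trivial even though the signed count at $e^{*}$ is nonzero --- the identity $s^{n_u(e^{*})}$ holds only when the other factors commute away. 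Your phrase ``a compatible combinatorial arrangement \dots (worked out in \cite[Theorem 4.5]{geometry1}) forces this monodromy to equal $s^{n_u(e^{*})}$'' is not an argument but a deferral of the crux to the very reference the paper cites for the whole proposition. To close the gap you would need either to exhibit such an arrangement and prove non-cancellation, or to replace the single wreath-product witness by a subdirect family of them (one per choice of auxiliary labels) whose combined kernel is still a power of $S$ while at least one member detects $uv^{-1}$; as written, neither is done.
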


\subsection{Four crucial properties of a $PQ$-category ${\mathfrak{N}}$} In the preceding section, four possible properties of a $PQ$-category have turned out to be crucial.
\begin{Def}\label{crucial properties} A $PQ$-category $\mathfrak{N}$ is
\begin{enumerate}
\item \emph{locally extensible} if every $G\in\mathfrak{N}$ admits some finite simple group $S$ for which $G^{A,S}\in \mathfrak{N}$;
\item \emph{freely indexed} if every $G\in \mathfrak{N}$ admits some $H\in \mathfrak{N}$, with $H\twoheadrightarrow G$ such that
$$d(\ker(H\twoheadrightarrow G))=\vert G\vert(\vert A\vert-1)+1;$$
\item \emph{arboreous} if every $G \in\mathfrak{N}$ admits a (pre)dissolver $H\in \mathfrak{N}$;
\item \emph{Hall} if every $G \in\mathfrak{N}$ admits a weak (pre)dissolver $H\in \mathfrak{N}$.
\end{enumerate}
Moreover, a formation  $\mathfrak{F}$ or a variety $\mathbf{V}$ has the respective property if for every finite set $A$, $\vert A\vert\ge 2$, the $PQ$-category $\mathfrak{F}_A$ (resp.~$\mathbf{V}_A$) of all $A$-generated members of $\mathfrak{F}$ (of $\mathbf{V}$) has the property in question.
\end{Def}
The discussion so far has shown:
\begin{Thm} Let $\mathfrak{N}$ be a $PQ$-category.
\begin{enumerate}
\item $\wh{F_\mathfrak{N}}$ is freely indexed if and only if $\mathfrak{N}$ is freely indexed.
\item $\Gamma(\wh{F_\mathfrak{N}})$ is tree-like if and only if $\mathfrak{N}$ is arboreous.
\item  $\Gamma(\wh{F_\mathfrak{N}})$ is Hall if and only if $\mathfrak{N}$ is Hall.
\end{enumerate}
\end{Thm}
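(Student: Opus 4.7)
The statement is essentially a consolidation of results that have just been established: each of the three equivalences is a matter of matching the definition in Definition \ref{crucial properties} against a characterization theorem proven earlier in the subsection. The plan is therefore to deduce each item by citation, making explicit how the hypothesis on $\mathfrak{N}$ translates into the corresponding geometric/algebraic condition on $\wh{F_\mathfrak{N}}$ or $\Gamma(\wh{F_\mathfrak{N}})$.

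For part (1), I would observe that Definition \ref{crucial properties}(2) is word-for-word condition (3) of Proposition \ref{char:freely indexed}. Since Proposition \ref{char:freely indexed} establishes the equivalence of its condition (3) with $\wh{F_\mathfrak{N}}$ being freely indexed (via the intermediate statement (2) about the Hausdorff-filtered family $\sfN$ associated with $\mathfrak{N}$), the claim in part (1) follows at once. Here one should note that $\mathfrak{N}$ is implicitly taken to be relation-free in order to have $\wh{F_\mathfrak{N}}$ well defined with $F$ densely embedded.

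For part (2), Definition \ref{crucial properties}(3) asserts that every $G\in\mathfrak{N}$ admits a (pre)dissolver lying in $\mathfrak{N}$, which is precisely the hypothesis of Theorem \ref{Cayley:tree-like}; conversely, that theorem states that tree-likeness of $\Gamma(\wh{F_\mathfrak{N}})$ forces each $G\in\mathfrak{N}$ to admit such a (pre)dissolver in $\mathfrak{N}$. Thus the equivalence is immediate from Theorem \ref{Cayley:tree-like}. Part (3) is analogous: Definition \ref{crucial properties}(4) is exactly the hypothesis of Theorem \ref{Cayley:Hall}, with ``weak (pre)dissolver'' replacing ``(pre)dissolver'', and the equivalence drops out directly.

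Since each part is a one-line consequence of a prior result, there is no real obstacle to overcome; the only point requiring any care is being explicit that the passage from predissolver to dissolver (respectively weak predissolver to weak dissolver) needed inside $\mathfrak{N}$ is free, because the remark after Definition \ref{dissolver} (respectively after Definition \ref{weak dissolver}) shows that $H\mathrel{\underset{A}{\times}}G\in\mathfrak{N}$ is a (weak) dissolver whenever $H\in\mathfrak{N}$ is a (weak) predissolver of $G\in\mathfrak{N}$, and $\mathfrak{N}$ is closed under finite products of $A$-generated groups by the $PQ$-axioms.
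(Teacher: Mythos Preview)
Your proposal is correct and matches the paper's own treatment: the paper gives no separate proof of this theorem but simply prefaces it with ``The discussion so far has shown:'', relying exactly on Proposition~\ref{char:freely indexed} for (1), Theorem~\ref{Cayley:tree-like} for (2), and Theorem~\ref{Cayley:Hall} for (3). Your added remark about upgrading a (weak) predissolver to a (weak) dissolver inside $\mathfrak{N}$ via closure under $A$-products is a helpful clarification but is already implicit in the paper's phrasing ``(pre)dissolver'' in Definition~\ref{crucial properties}.
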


 By \cite{supersolvable,geometry1}, the following implications hold for a $PQ$-category:
$$\mbox{locally extensible} \Rightarrow \mbox{freely indexed} \Rightarrow \mbox{Hall}$$
and
$$\mbox{locally extensible} \Rightarrow \mbox{arboreous} \Rightarrow \mbox{Hall}.$$

In \cite{supersolvable} the varieties of finite supersolvable groups having either of the properties of Definition \ref{crucial properties} were classified and it was shown that the four conditions are equivalent in that case. Shusterman \cite{Shusterman} classified all prosupersolvable freely indexed groups and an email discussion with him revealed that even for $PQ$-categories of supersolvable groups these four conditions are equivalent (or equivalently, for finitely generated prosupersolvable groups the corresponding properties are the same). In particular, the formations of finite supersolvable groups enjoying either of these properties coincide with the varieties found in \cite{supersolvable}. To the best of our knowledge, no examples of $PQ$-categories or formations, let alone varieties, so far have been found for which these four conditions are distinct. The main purpose of the present paper is to construct examples of formations (via $PQ$-categories)  for which these four conditions disagree. Section \ref{non-solvable} presents examples of formations which are arboreous (and therefore Hall) but not freely indexed (and therefore not locally extensible), while Section \ref{solvable} is concerned with the construction of formations of solvable groups which are arboreous and freely indexed (and therefore Hall) but not locally extensible. The problem to construct such varieties and/or formations has been raised in several papers \cite{geometry1,geometry0,power,supersolvable} and, for varieties this is still open. The problem to classify all Hall formations has been proposed by Ballester-Bolinches, Pin and Soler-Escriv\`a \cite{ballpinxaro}.

\subsection{Pro-$\mathfrak{N}$-topology and geometry of $\Gamma(\wh{F_\mathfrak{N}})$} \subsubsection{Interplay between pro-$\mathfrak{N}$-topology of $F$ and geometry of $\Gamma(\wh{F_\mathfrak{N}})$} Let $\mathfrak{N}$ be a relation-free $PQ$-category.
Following Margolis, Sapir and Weil \cite{MSW} we call a finitely generated subgroup $H$ of $F$ \emph{$\mathfrak{N}$-extendible} if the core graph $\cH$ of $H$ can be embedded into a finite complete graph $\ol{\cH}$ whose transition group $T_{\ol{\cH}}$ belongs to $\mathfrak{N}$ (in order to be precise, this means: $T_{\ol{\cH}}$ is isomorphic --- \textbf{as an $A$-generated group} --- with a member of $\mathfrak{N}$). Since a permutation automaton $(\cA,1)$ (with distinguished vertex $1$) is always isomorphic to the Schreier graph $(\Sigma(T_\cA,U,A),U)$ (with distinguished vertex $U$) where $U$ is the stabilizer of $1$, the permutation automata $\cA$ for which $T_\cA$ belongs to $\mathfrak{N}$ are exactly the Schreier graphs $\Sigma(F,H,A)$ for finite index subgroups $H$ of $F$ for which there exists $N\in\sfN$ with $H\ge N$.

Let $\cH$ with distinguished vertex $1=1_\cH$ be the Stallings automaton of the finitely generated subgroup $H$ of $F$ and let $G\in\mathfrak{N}$. By ${\cH}^G$ we denote the subgraph of $\Gamma(G)$ spanned by all edges lying on some path starting at $1$ labeled by some word $w\in F$ which labels a path in $\cH$ starting at $1_\cH$. We set $\cH^{\wh{F_\mathfrak{N}}}:=\varprojlim_{G\in \mathfrak{N}}\cH^G$ and call $\cH^{\wh{F_\mathfrak{N}}}$ the \emph{$\mathfrak{N}$-universal covering graph of $\cH$} \cite{geometry1}. Suppose that $\cH$ embeds (as a pointed graph) in a complete graph $\cC$ (with distinguished vertex $1_\cC$) with transition group $T_\cC$; the unique graph morphism $\Gamma(T_\cC)\twoheadrightarrow \cC$ mapping $1$ to $1_\cC$ induces a unique morphism $\cH^{T_\cC}\twoheadrightarrow \cH$ mapping $1$ to $1_\cH$.  Hence, if $H$ is $\mathfrak{N}$-extendible then there exists a canonical morphism $\cH^{\wh{F_\mathfrak{N}}}\twoheadrightarrow \cH$ of pointed graphs. Suppose conversely that there is such a morphism  $\cH^{\wh{F_\mathfrak{N}}}\twoheadrightarrow \cH$; from the universal property of projective limits that morphism factors through $\cH^G$ for some $G\in \mathfrak{N}$, that is, there is a morphism $\tau:\cH^G\twoheadrightarrow \cH$ of pointed graphs mapping the distinguished vertex $1$ of $\cH^G$ to the distinguished vertex $1_\cH$ of $\cH$.  Let
$$T:=\{[w]_G\mid w\in H\}$$
be the image of $H$ under the canonical morphism $F\twoheadrightarrow G$. Consider the Schreier graph $\Sigma:=\Sigma(G,T,A)$ and the canonical morphism $\psi:\Gamma(G)\twoheadrightarrow \Sigma$. (Readers familiar with graph congruences and Stallings foldings \cite{MSW, kapmas, Stallings} may view the map $\psi$ as follows: first identify all vertices of $T$ to one vertex and then, in the resulting graph, apply Stallings foldings until the outcome is a folded graph.) In particular, $\psi(t)=T$ for all $t\in T$. Note that every such $t$ is a vertex of $\cH^G$. Now let $g$ and $h$ be vertices of $\cH^G$ such that $\psi(g)=\psi(h)$. Hence there are $s,t\in T$ and a word $w$ such that $g=s[w]_G$ and $h=t[w]_G$. Since $s$ and $g$ are vertices of $\cH^G$ there is a word $v$ labeling a path $\pi:s\to s[w]_G$ which runs entirely in $\cH^G$. In addition, $s[w]_G=g=s[v]_G$ which implies $[w]_G=[v]_G$. Under $\tau$, the path $\pi$ is mapped to the path $\tau(\pi):1_\cH\to \tau(g)$ having label $v$. From the definition of $\cH^G$ it follows that the path $\pi':t\to h=t[w]_G=t[v]_G$ labeled $v$ runs entirely in $\cH^G$ (since the path $\tau(\pi)$ may be lifted to a path in $\cH^G$ starting at $t$ instead of $s$). Then $\tau(\pi)=\tau(\pi')$ and, in particular, $\tau(g)=\tau(h)$. Altogether we have proved:
\begin{equation}\label{factoring tau}
\mbox{for all vertices }g,h\in \cH^G: \psi(g)=\psi(h)\Longrightarrow \tau(g)=\tau(h).
\end{equation}
(From the point of view of Stallings foldings, the above argument essentially says that for two vertices $g,h\in\cH^G$ which are identified  by the folding process (which eventually computes the map $\psi$), that process may be performed in a way such that for the identification of $g$ with $h$ only edges of $\cH^G$ are involved. This means one could start the folding process $\Gamma(G)\to \Sigma$ inside $\cH^G$ which leads to an intermediary graph which contains $\cH$ as subgraph; the subsequent foldings would leave this subgraph unchanged so that, at the end, $\cH$ appears as subgraph of $\Sigma$.) Implication  \ref{factoring tau} says that the map $\tau$ factors through $\psi(\cH^G)$, that is, there is a (unique) morphism of pointed graphs $\alpha:\psi(\cH^G)\twoheadrightarrow \cH$ such that $\alpha\circ \psi\vert_{\cH^G}=\tau$. On the other hand, every $w\in H$ labels a path $1\to [w]_G\in T$ running entirely in $\cH^G$. That path is mapped under $\psi$ to a path in $\psi(\cH^G)$ closed at $T$. Hence by \cite[Proposition 2.4]{MSW} there is a (unique) morphism of pointed graphs $\beta: \cH\to \psi(\cH^G)$ mapping $1_\cH$ to $T$. Uniqueness of the involved morphisms implies $\psi\vert_{\cH^G}=\beta\circ\tau$. Using the surjectivity of $\tau$ onto $\cH$ and the surjectivity of $\psi\vert_{\cH^G}$ onto $\psi(\cH^G)$, it follows that $\alpha\circ\beta$ resp.~$\beta\circ\alpha$ is the identity on $\cH$ resp.~on $\psi(\cH^G)$, so that $\alpha$ and $\beta$ are inverse isomorphisms between $\cH$ and $\psi(\cH^g)$. Hence we see that $\cH\cong \psi(\cH^G)$ appears as a pointed subgraph of $\Sigma$.   Since the transition group of $\Sigma$ is $G/T_G$ it belongs to $\mathfrak{N}$, that is, $\cH$ is $\mathfrak{N}$-extendible.  Altogether we get the following result \cite{geometry1}.
\begin{Thm} A finitely generated subgroup $H$ of $F$ is $\mathfrak{N}$-extendible if and only if there is a (unique) continuous morphism (of pointed graphs) $\cH^{\wh{F_\mathfrak{N}}}\twoheadrightarrow \cH$.
\end{Thm}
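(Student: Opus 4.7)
The plan is to prove both implications by combining the universal property of the projective limit defining $\cH^{\wh{F_\mathfrak{N}}}$ with a Stallings-type folding analysis.

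For the forward direction ($\mathfrak{N}$-extendible $\Rightarrow$ morphism), I would assume $\cH$ embeds as a pointed graph in a complete graph $\cC$ with $T_\cC\in\mathfrak{N}$. The canonical morphism $\Gamma(T_\cC)\twoheadrightarrow \cC$ (mapping $1$ to $1_\cC$) restricts to a morphism of pointed graphs $\cH^{T_\cC}\twoheadrightarrow \cH$. Since $T_\cC\in\mathfrak{N}$, the inverse-limit description of $\cH^{\wh{F_\mathfrak{N}}}$ gives a projection $\cH^{\wh{F_\mathfrak{N}}}\twoheadrightarrow \cH^{T_\cC}$, and composition produces the required continuous surjection. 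Uniqueness follows because two continuous maps to the finite graph $\cH$ which agree on the dense image of $F$ coincide.

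For the converse (the harder direction), assume a continuous morphism $\cH^{\wh{F_\mathfrak{N}}}\twoheadrightarrow \cH$ exists. Since $\cH$ is finite and $\cH^{\wh{F_\mathfrak{N}}}=\varprojlim_{G\in\mathfrak{N}}\cH^G$, the morphism factors through some $\tau:\cH^G\twoheadrightarrow \cH$ for a $G\in\mathfrak{N}$. Set $T:=\{[w]_G\mid w\in H\}\le G$, let $\Sigma:=\Sigma(G,T,A)$ be the associated Schreier graph (whose transition group is a quotient of $G$, hence in $\mathfrak{N}$), and let $\psi:\Gamma(G)\twoheadrightarrow \Sigma$ be the canonical folding-type morphism. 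The goal is to exhibit $\cH$ as a pointed subgraph of $\Sigma$.

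The main technical obstacle, which I expect to absorb most of the work, is to verify the implication
\[
\psi(g)=\psi(h)\ \Longrightarrow\ \tau(g)=\tau(h)\qquad \text{for all vertices } g,h\in\cH^G.
\]
The plan for this step: write $g=s[w]_G$, $h=t[w]_G$ with $s,t\in T$ and some word $w$; since $s,g\in\cH^G$, there is a path $s\to s[w]_G$ inside $\cH^G$ labeled by some word $v$ (originating, via the definition of $\cH^G$, from an $\cH$-readable word), forcing $[v]_G=[w]_G$. The image of this path under $\tau$ ends at $\tau(g)$; lifting the same label from $t$ gives a path in $\cH^G$ ending at $h$, whose $\tau$-image (read from $1_\cH=\tau(t)$) also labels $v$, so by folding $\tau(g)=\tau(h)$. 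Once this factoring is established, one obtains a well-defined $\alpha:\psi(\cH^G)\twoheadrightarrow \cH$ with $\alpha\circ\psi|_{\cH^G}=\tau$; conversely, each $w\in H$ gives a closed path at $T$ in $\psi(\cH^G)$, so the Stallings universal property provides $\beta:\cH\to \psi(\cH^G)$ mapping $1_\cH$ to $T$. Uniqueness of morphisms out of $\cH$ and $\psi(\cH^G)$, together with the surjectivities of $\tau$ and $\psi|_{\cH^G}$, forces $\alpha$ and $\beta$ to be mutually inverse. Hence $\cH$ is isomorphic to a pointed subgraph of the permutation automaton $\Sigma$, which proves $\mathfrak{N}$-extendibility.
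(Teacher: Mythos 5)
Your proposal is correct and follows essentially the same route as the paper's own argument: the forward direction via $\Gamma(T_\cC)\twoheadrightarrow\cC$ and the projection $\cH^{\wh{F_\mathfrak{N}}}\twoheadrightarrow\cH^{T_\cC}$, and the converse by factoring through some $\tau:\cH^G\twoheadrightarrow\cH$, establishing the implication $\psi(g)=\psi(h)\Rightarrow\tau(g)=\tau(h)$ by the same lifting-of-labels argument, and then exhibiting $\cH\cong\psi(\cH^G)$ inside the Schreier graph $\Sigma(G,T,A)$ via the mutually inverse morphisms $\alpha$ and $\beta$. No substantive differences to report.
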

In view of this theorem one can give the following interpretation of the Stallings automaton $\til{\cH}$ of the $\mathfrak{N}$-extendible closure \cite{MSW} $\til{H}$ of a finitely generated subgroup $H$ of $F$ (having Stallings automaton $\cH$):  $\til{\cH}$ is  the largest quotient of $\cH$ which is also a (continuous) quotient of $\cH^{\wh{F_\mathfrak{N}}}$.

\begin{Rmk} The following Theorems \ref{3.12}, \ref{3.14}, \ref{RZTheorem}  are about the interplay between the pro-$\mathfrak{N}$-topology on $F$ and the geometry of the Cayley graph $\Gamma(\wh{F_\mathfrak{N}})$ of the pro-$\mathfrak{N}$-completion of $F$. These results have already been formulated and proved in \cite{geometry0, geometry1} in the context of formations \cite{geometry1} and varieties \cite{geometry0}. In that proofs only the geometric structure of the Cayley graph $\Gamma(\wh{F_\mathfrak{N}})$ and its approximation by finite graphs was used. The fact that the groups $\wh{F_\mathfrak{N}}$ were \emph{relatively free} played no role. Hence these proofs carry over verbally to the present more general context and are omitted. We also note that, as a prerequisite, the relevant statements in sections 2.1 and 2.2 of \cite{MSW} also carry over from the variety case to the present situation of $PQ$-categories. The reason for the latter is that they are based on Theorem 3.3 in \cite{Hall} (the pro-$\mathfrak{N}$-closure in $F$ of a finitely generated subgroup $H$ is the intersection of all pro-$\mathfrak{N}$-open subgroups containing $H$), which also holds in the present context.
\end{Rmk}

\begin{Thm}[\cite{geometry1}, Theorem 3.1]\label{3.12}  An $\mathfrak{N}$-extendible finitely generated subgroup $H$ of $F$ is pro-$\mathfrak{N}$-closed if and only if $\cH^{\wh{F_\mathfrak{N}}}$ is a Hall-subgraph of $\Gamma(\wh{F_\mathfrak{N}})$.
\end{Thm}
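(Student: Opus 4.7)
The plan is to follow the template of the proofs in \cite{geometry0, geometry1}, which by the Remark above carries over to the $PQ$-category setting. The tools are: \cite[Theorem 3.3]{Hall}, by which the pro-$\mathfrak{N}$-closure of $H$ is the intersection of all pro-$\mathfrak{N}$-open supergroups of $H$; the Margolis--Sapir--Weil dictionary (transferred via the Remark) which identifies such supergroups with permutation-automaton completions of a suitable finite approximation $\cH^G$ ($G \in \mathfrak{N}$) whose transition group lies in $\mathfrak{N}$; the inverse-limit presentation $\cH^{\wh{F_\mathfrak{N}}} = \varprojlim_{G \in \mathfrak{N}} \cH^G$, which combined with compactness translates graph-theoretic properties of the limit into combinatorial statements about the approximations; and, since $H$ is $\mathfrak{N}$-extendible, the canonical morphism $\cH^{\wh{F_\mathfrak{N}}} \twoheadrightarrow \cH$ supplied by the preceding theorem.

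For $(\Leftarrow)$, assume $\cH^{\wh{F_\mathfrak{N}}}$ is a Hall-subgraph and take $w \in F \setminus H$. Reading $w$ from $1_\cH$ in $\cH$ either fails partway or ends at a vertex different from $1_\cH$; applying the morphism $\cH^{\wh{F_\mathfrak{N}}} \twoheadrightarrow \cH$ and the Hall hypothesis to the path labeled $w$ at $1$ in $\Gamma(\wh{F_\mathfrak{N}})$, and passing by compactness to a sufficiently fine finite $G \in \mathfrak{N}$, one finds that in $\Gamma(G)$ either an edge of the path labeled $w$ at $1$ lies outside $\cH^G$, or its endpoint $[w]_G$ is a vertex of $\cH^G$ distinct from $1$. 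Completing $\cH^G$ to a permutation automaton with transition group in $\mathfrak{N}$ (possible by the $\mathfrak{N}$-extendibility of $H$, possibly after enlarging $G$) yields a pro-$\mathfrak{N}$-open subgroup $K \supseteq H$ with $w \notin K$, whence $w \notin \overline{H}$ and $H = \overline{H}$.

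For $(\Rightarrow)$, assume $H$ is pro-$\mathfrak{N}$-closed, and let $\pi \colon u \to v$ be a finite reduced path in $\Gamma(\wh{F_\mathfrak{N}})$ with label $w \in F$ and endpoints $u, v \in \cH^{\wh{F_\mathfrak{N}}}$. Arguing by contradiction, suppose some edge of $\pi$ escapes $\cH^{\wh{F_\mathfrak{N}}}$; then for sufficiently fine $G \in \mathfrak{N}$ the projected path in $\Gamma(G)$ contains an edge outside $\cH^G$ while $[u]_G$ and $[v]_G$ remain in $\cH^G$. Choosing words $s, t \in F$ labeling paths from $1_\cH$ in $\cH$ to the images of $u, v$ in $\cH$, one shows (via the MSW dictionary and a careful chase through $\cH^G \twoheadrightarrow \cH$) that the image of $s w t^{-1}$ in every $G \in \mathfrak{N}$ lies in the image of $H$, so $s w t^{-1} \in \overline{H} = H$; the resulting closed path at $1_\cH$ in $\cH$ labeled $s w t^{-1}$ then forces the middle $w$-segment to lie in $\cH$, contradicting the escape in $\cH^G$. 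The main obstacle is this last step: the same profinite vertex $u$ is generally realised by mutually incompatible words across different $G$, so one must carefully align the lifts $s, t$ along the inverse system to ensure that $[swt^{-1}]_G \in [H]_G$ holds uniformly; this is the place where the compactness argument at the heart of \cite[Sec.~2.1--2.2]{MSW}, transferred to the $PQ$-setting via the Remark, is decisive.
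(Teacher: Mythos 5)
The paper does not reproduce a proof of this statement (it is quoted from \cite{geometry1} and, per the Remark, the proof there carries over verbatim), so I am judging your sketch against that intended argument. Your $(\Rightarrow)$ direction is essentially the right argument, and the ``obstacle'' you flag at the end is in fact not one: for each vertex $x$ of $\cH$ fix one word $p_x$ reading $1_\cH\to x$ in $\cH$; then for \emph{every} $G\in\mathfrak{N}$ the fibre of $\cH^G\twoheadrightarrow\cH$ over $x$ is exactly the coset $[H]_G\,[p_x]_G$, so the fixed lifts $s=p_x$, $t=p_y$ give $[swt^{-1}]_G\in[H]_G$ uniformly in $G$, and the $[H]_G$-invariance of $\cH^G$ transports the conclusion from the path labelled $w$ at $[s]_G$ to the path $\pi_G$ at $[u]_G$. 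What you do still owe is the cancellation lemma behind ``forces the middle $w$-segment to lie in $\cH$'': every edge of the $w$-portion of the (unreduced) closed path labelled $swt^{-1}$ either survives into the reduced closed path at $1_\cH$ or cancels against an edge of the $s$- or $t$-portions, and in either case lies in $\cH$.

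The $(\Leftarrow)$ direction has a genuine gap. A completion $\cC$ of the pointed graph $(\cH^G,1)$ yields the open subgroup $L(\cC,1)$, and this does \emph{not} contain $H$: any $h\in H$ with $[h]_G\ne 1$ reads in $\cH^G\subseteq\cC$ from $1$ to $[h]_G\ne 1$, so $h\notin L(\cC,1)$. Hence ``completing $\cH^G$'' cannot produce the separating open subgroup $K\supseteq H$, and your dichotomy is also not what the Hall hypothesis delivers: an edge of the path labelled $w$ lying outside $\cH^G$ says nothing about membership of $w$ in an open supergroup of $H$ (take $w\in HN\setminus H$ whose path leaves $\cH^G$), and $[w]_G$ being a vertex of $\cH^G$ distinct from $1$ is compatible with $[w]_G\in[H]_G$, i.e.\ with $w\in HN$. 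The correct argument splits according to your first sentence. If $w$ reads in $\cH$ from $1_\cH$ to a vertex $x\ne 1_\cH$, no Hall hypothesis is needed: any completion of $\cH$ \emph{itself} with transition group in $\mathfrak{N}$ (which exists by $\mathfrak{N}$-extendibility) gives an open $K\supseteq H$ with $1_\cC\cdot w=x\ne 1_\cC$, so $w\notin K$. If instead $w$ falls off $\cH$, then the path labelled $w$ at $1$ in $\Gamma(\wh{F_\mathfrak{N}})$ cannot be contained in $\cH^{\wh{F_\mathfrak{N}}}$ (its image under $\cH^{\wh{F_\mathfrak{N}}}\twoheadrightarrow\cH$ would be a path labelled $w$ at $1_\cH$), and now the Hall hypothesis, applied to the finite reduced path from $1$ to $[w]$, forces the \emph{endpoint} $[w]$ to lie outside $V(\cH^{\wh{F_\mathfrak{N}}})$; hence $[w]_G\notin V(\cH^G)\supseteq[H]_G$ for some $G$, i.e.\ $w\notin HN$ for $N=\ker(F\twoheadrightarrow G)$, and $HN$ is a pro-$\mathfrak{N}$-open subgroup containing $H$. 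Your version never isolates this step, which is the only place the Hall property of $\cH^{\wh{F_\mathfrak{N}}}$ is actually used.
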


\begin{Lemma}\label{Hallclosure} Let $\cH$ be a connected subgraph of a connected profinite graph $\Gamma$; then there exists a (unique) smallest Hall-subgraph  $(\cH)^{\bm{\mathsf{H}}}$ containing $\cH$.
\end{Lemma}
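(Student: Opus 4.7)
The plan is to build $(\cH)^{\bm{\mathsf{H}}}$ by a transfinite closure process and then identify it with the intersection of all Hall-subgraphs of $\Gamma$ containing $\cH$. Concretely, set $\cH_0 := \cH$; for a successor ordinal put $\cH_{\alpha+1}$ equal to the topological closure in $\Gamma$ of the union of $\cH_\alpha$ with all finite reduced paths of $\Gamma$ whose endpoints are vertices of $\cH_\alpha$; for a limit ordinal put $\cH_\alpha := \overline{\bigcup_{\beta<\alpha}\cH_\beta}$. For cardinality reasons the chain stabilises at some ordinal $\lambda$, and one sets $\Delta^{\ast} := \cH_\lambda$.

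The first task is to verify that $\Delta^{\ast}$ is a Hall-subgraph. Closedness under the incidence operations and under the topology is immediate from the construction, and the path-containment condition holds once the chain has stabilised. The delicate point is profinite connectedness, which I would establish by transfinite induction on $\alpha$ using two preservation facts. For any finite continuous quotient $\eta : \Gamma \twoheadrightarrow \Gamma'$ and any $X \subseteq \Gamma$, one has $\eta(\overline{X}) = \eta(X)$, because a convergent net in the finite discrete space $\Gamma'$ is eventually constant; thus topological closure preserves profinite connectedness. Inserting finite reduced paths each of whose endpoints already lies in a profinitely connected subgraph also preserves connectedness, since the $\eta$-image of each inserted path is a walk whose endpoints lie in the connected $\eta$-image of the previous stage. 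Finally, ascending unions of profinitely connected subgraphs are profinitely connected because their $\eta$-images are ascending unions of connected finite subgraphs.

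Minimality is then obtained by a second transfinite induction. If $\Delta$ is any Hall-subgraph of $\Gamma$ containing $\cH$, then $\cH_0 \subseteq \Delta$ by hypothesis; at a successor stage the Hall property of $\Delta$ forces every finite reduced path of $\Gamma$ between vertices of $\cH_\alpha \subseteq \Delta$ to lie in $\Delta$, and closedness of $\Delta$ then yields $\cH_{\alpha+1} \subseteq \Delta$; the limit stage uses only closedness of $\Delta$. Hence $\Delta^{\ast} \subseteq \Delta$ for every such $\Delta$, and since $\Delta^{\ast}$ is itself a Hall-subgraph containing $\cH$ it is the unique minimum, which we denote $(\cH)^{\bm{\mathsf{H}}}$.

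The main obstacle is the connectedness step: making certain that profinite (rather than merely topological) connectedness passes through both topological closure and ascending unions of subgraphs within the profinite-graph category. Once these two preservation properties are in hand, everything else is formal transfinite bookkeeping, and one could equivalently phrase the argument as showing that the intersection $\bigcap\{\Delta \mid \cH \subseteq \Delta,\ \Delta \text{ Hall}\}$ is itself a Hall-subgraph, with $\Delta^{\ast}$ providing a concrete description of it.
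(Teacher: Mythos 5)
Your construction is correct, but it proceeds in the opposite direction from the paper. The paper's proof is a short ``from above'' argument: it sets $\cK:=\bigcap\mathrm{Hall}(\cH)$, the intersection of all Hall-subgraphs of $\Gamma$ containing $\cH$ (nonempty, since $\Gamma$ itself qualifies), observes that $\cK$ inherits the path-containment condition directly from each member of the family, and then --- since $\cK$ need not be connected --- takes the connected component of $\cK$ containing $\cH$, using that every connected component of a subgraph satisfying the path condition is itself a Hall-subgraph. You instead build $(\cH)^{\bm{\mathsf{H}}}$ ``from below'' by transfinite iteration, which makes minimality essentially automatic but shifts all the work onto preserving profinite connectedness through path-insertion, topological closure and ascending unions. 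Your three preservation facts are sound (the key points being $\eta(\overline{X})\subseteq\overline{\eta(X)}=\eta(X)$ for finite continuous quotients $\eta$, and that connectedness of a closed subgraph $\Delta$ may be tested on a cofinal system of its finite images --- a standard fact you use implicitly and should perhaps cite). The trade-off: the paper's route hides its subtlety in the facts that connected components of profinite graphs are closed and that the component containing the connected graph $\cH$ is again Hall, whereas yours hides its subtlety in the connectedness bookkeeping; your version additionally yields a concrete layered description of $(\cH)^{\bm{\mathsf{H}}}$, and your closing remark correctly identifies it with the paper's intersection.
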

\begin{proof} Let $\mathrm{Hall}(\cH)$ be the set of all Hall-subgraphs of $\Gamma$ which contain $\cH$ and let $\cK:=\bigcap \mathrm{Hall}(\cH)$. Then $\cK$ is Hall: let $u,v$ be vertices of $\cK$ which are connected by a finite reduced path $\pi$; for every  $\cC\in \mathrm{Hall}(\cH)$, since $\cK\subseteq \cC$, we have $u,v\in \cC$ and $\pi\subseteq \cC$, hence $\pi\subseteq \bigcap \mathrm{Hall}(\cH)=\cK$. Every connected component of a Hall-subgraph of $\Gamma$ is itself a Hall-subgraph. It follows that the connected component of $\cK$ containing $\cH$ is a connected Hall-subgraph of $\Gamma$ and is contained in every Hall-subgraph of $\Gamma$ containing $\cH$.
\end{proof}

\begin{Problem} Let $\mathfrak{N}$ be a relation-free $PQ$-category; for a subgroup $H$ of $F$ let us denote by $\ol{H}$ the closure in $F$ with respect to the pro-$\mathfrak{N}$ topology.
\begin{enumerate}
\item Is  $\ol{H}$ finitely generated whenever $H$ is finitely generated?
\item For an $\mathfrak{N}$-extendible subgroup $H$ of $F$ with core graph $\cH$, what is the connection between  $\ol H$ and $(\cH^{\wh{F_\mathfrak{N}}})^{\bm{\mathsf{H}}}$? In case $K=\overline H$ is finitely generated with core graph $\mathrsfs{K}$, is $\mathrsfs{K}^{\wh{F_\mathfrak{N}}}=(\mathrsfs{H}^{\wh{F_\mathfrak{N}}})^{\bm{\mathsf{H}}}$?
\end{enumerate}
\end{Problem}

Of particular interest are categories $\mathfrak{N}$ for which \textbf{every} $\mathfrak{N}$-extendible subgroup $H$ is pro-$\mathfrak{N}$-closed. This is the case if and only if \textbf{every} connected subgraph of $\Gamma(\wh{F_\mathfrak{N}})$ is a Hall-subgraph, that is $\Gamma(\wh{F_\mathfrak{N}})$ has the Hall property (Definition \ref{Hall-subgraph}).
\begin{Thm}[\cite{geometry1}, Theorem 3.2]\label{3.14}  Let $\mathfrak{N}$ be a relation-free $PQ$-category; then every $\mathfrak{N}$-extendible finitely generated subgroup $H$ of $F$ is pro-$\mathfrak{N}$-closed if and only if the Cayley graph $\Gamma(\wh{F_\mathfrak{N}})$ is Hall.
\end{Thm}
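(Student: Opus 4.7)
The plan is to reduce both implications to Theorem \ref{3.12}, which translates pro-$\mathfrak{N}$-closedness of a finitely generated $\mathfrak{N}$-extendible subgroup $H\le F$ into the Hall-subgraph property of $\cH^{\wh{F_\mathfrak{N}}}$ inside $\Gamma(\wh{F_\mathfrak{N}})$. The forward implication ($\Leftarrow$) is essentially immediate: if $\Gamma(\wh{F_\mathfrak{N}})$ is Hall then every connected subgraph, in particular every $\cH^{\wh{F_\mathfrak{N}}}$ (which is connected, being the projective limit of the connected subgraphs $\cH^G$ spanned by edges on paths from the base vertex), is a Hall-subgraph, and Theorem \ref{3.12} closes the case.

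For the converse I would argue by contrapositive. Suppose $\Gamma(\wh{F_\mathfrak{N}})$ is not Hall. There is a connected subgraph $\Delta$, vertices $u,v\in \Delta$, and a finite reduced path $\pi\colon u\to v$ in $\Gamma(\wh{F_\mathfrak{N}})$ that is not entirely contained in $\Delta$. Left-translating by $u^{-1}$, we may assume $u=1$ and in particular $1\in \Delta$. Writing $\Delta = \varprojlim_{G\in \mathfrak{N}}\Delta_G$, where $\Delta_G\subseteq \Gamma(G)$ is the image of $\Delta$, the goal is to single out one $G_0\in \mathfrak{N}$ for which the finite image $\Delta_{G_0}$ inherits the failure: some edge of the projected path $\pi_{G_0}$ lies outside $\Delta_{G_0}$, while the vertex $v_{G_0}$ still belongs to $\Delta_{G_0}$ and the projection $\pi_{G_0}$ is, edge-for-edge, a faithful copy of $\pi$. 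Such a $G_0$ exists because $\mathfrak{N}$ is a $PQ$-category closed under finite products, and only finitely many items of data have to be separated simultaneously; any finite collection of separations can be realized in a joint common refinement inside $\mathfrak{N}$.

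Next I would set $H := L(\Delta_{G_0},1)$, the finitely generated subgroup of $F$ consisting of the words labelling closed paths at $1$ in $\Delta_{G_0}$. Since $\Delta_{G_0}$ is a folded subgraph of the complete $A$-labelled graph $\Gamma(G_0)$ whose transition group $G_0$ lies in $\mathfrak{N}$, the subgroup $H$ is $\mathfrak{N}$-extendible by the criterion recalled in Subsection \ref{stall}. The decisive step is then to verify that the profinite subgraph $\cH^{\wh{F_\mathfrak{N}}}$ contains both $1$ and $v$ but does \emph{not} contain the full edge set of $\pi$: the vertex $v$ is captured because a word reading a path $1\to v_{G_0}$ inside $\Delta_{G_0}$ lifts through the projective system $\wh{F_\mathfrak{N}} = \varprojlim G$ to a word whose value is exactly $v\in \wh{F_\mathfrak{N}}$ (refining $G_0$ further if necessary), while the witnessing edge of $\pi$ is excluded by the way $G_0$ was chosen. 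Once this is established, Theorem \ref{3.12} furnishes a finitely generated $\mathfrak{N}$-extendible subgroup $H$ that fails to be pro-$\mathfrak{N}$-closed, completing the contrapositive.

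The hardest step is the bookkeeping at the interface between the abstract combinatorics of the reduced path $\pi$ (a finite object in $\Gamma(\wh{F_\mathfrak{N}})$ without intrinsic topology) and the profinite combinatorics of $\Delta$ and its finite images; more precisely, showing that a single $G_0\in \mathfrak{N}$ can be chosen to simultaneously separate all edges of $\pi$, keep the endpoint $v$ in the approximation $\Delta_{G_0}$, and exclude a specific edge of $\pi_{G_0}$ from $\Delta_{G_0}$, and then lifting the vertex $v$ back through the projective system so that it ends up as a vertex of $\cH^{\wh{F_\mathfrak{N}}}$ rather than merely of some coset-image thereof. As the authors note in the remark preceding the statement, this step is carried out verbatim as in \cite[Theorem 3.2]{geometry1}, the relatively-free hypothesis of the original argument playing no role beyond the approximation properties of $\Gamma(\wh{F_\mathfrak{N}})$.
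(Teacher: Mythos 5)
Your reduction of both implications to Theorem \ref{3.12} is the intended route (the paper itself defers the proof to \cite[Theorem 3.2]{geometry1}), and the direction ``$\Gamma(\wh{F_\mathfrak{N}})$ Hall $\Rightarrow$ every extendible $H$ closed'' is fine as you state it. The contrapositive of the other direction, however, has a genuine gap at exactly the step you flag as decisive: the claim that $v\in\cH^{\wh{F_\mathfrak{N}}}$ for $H:=L(\Delta_{G_0},1)$. The Stallings automaton $\cH$ of this subgroup is not $\Delta_{G_0}$ but its \emph{core} at $1$ (the subgraph spanned by reduced closed paths at $1$), and $\cH^{G_0}=\cH$. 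Since $\Delta$ is only profinitely connected, $\Delta_{G_0}$ is an arbitrary connected subgraph of $\Gamma(G_0)$ and may have hanging trees; if $v_{G_0}$ sits in one, then $v_{G_0}\notin\cH=\cH^{G_0}$, hence $v\notin\cH^{\wh{F_\mathfrak{N}}}$ and Theorem \ref{3.12} gives nothing. The justification you offer --- that a word reading a path $1\to v_{G_0}$ inside $\Delta_{G_0}$ ``lifts to a word whose value is exactly $v$'' --- cannot be repaired as stated: since $\mathfrak{N}$ is relation-free, $F$ embeds in $\wh{F_\mathfrak{N}}$, so any $u$ with $[u]_{\wh{F_\mathfrak{N}}}=v$ freely reduces to the label $w$ of $\pi$, and the path it reads from $1$ in $\Gamma(G_0)$ then traverses the excluded edge $e_{G_0}$; so no such $u$ is readable in $\Delta_{G_0}$. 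What is actually needed is, for \emph{every} $G\in\mathfrak{N}$, a $G$-dependent word $u_G$ readable in the fixed finite graph $\cH$ from $1_\cH$ with $[u_G]_G=v_G$; connectivity of $\Delta_G$ produces a word readable in $\Delta_{G_0}$, but not necessarily in its core.

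The repair is where the real content of \cite{geometry1,supersolvable} lies. Either one first reduces, via Theorem \ref{CayleyHallsupersol}/\ref{modifiedCayleyHall}, to witnesses of the special form $\Delta=\Gamma(\wh{F_\mathfrak{N}})\setminus\mU(1,a)^{\pm1}$ with $\mU$ open normal; its image in $G_0=\wh{F_\mathfrak{N}}/\mU$ is $\Gamma(G_0)\setminus(1,a)^{\pm1}$, for which one checks that the associated $\cH^{G}$ is the connected component of $1$ in $\Gamma(G)\setminus N_G(1,a)^{\pm1}$ and hence that $\cH^{\wh{F_\mathfrak{N}}}=\Delta$, so that $1$ and $a$ lie in $\cH^{\wh{F_\mathfrak{N}}}$ while the reduced path $(1,a)$ does not. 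Or, staying with a general $\Delta$, one must replace $\Delta_{G_0}$ by a connected subgraph of $\Gamma(G_0)\setminus e_{G_0}^{\pm1}$ containing it which is equal to its own core (for instance the component of $1$ there, once one has checked it has no vertices of degree one); then every path of $\Delta_G$ issuing from $1$ projects into $\cH$, giving $\Delta_G\subseteq\cH^G$ for all $G$, hence $1,v\in\Delta\subseteq\cH^{\wh{F_\mathfrak{N}}}$ while $e\notin\cH^{\wh{F_\mathfrak{N}}}$. Your outline names the right objects, but without this adjustment the chosen $H$ need not witness the failure of closedness.
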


The property of being tree-like (Definition \ref{definition:tree-like}) has attracted considerable attention and is important  in the context of the Ribes--Zalesski\u\i-Theorem \cite{RZ}.

\begin{Thm}[\cite{geometry1}, Theorem 3.5, Theorem 3.9]\label{RZTheorem} The following conditions on a relation-free $PQ$-category $\mathfrak{N}$ are equivalent:
\begin{enumerate}
\item the product $H_1H_2$ of any two $\mathfrak{N}$-extendible subgroups $H_1,H_2$ of $F$ is pro-$\mathfrak{N}$ closed;
\item the product $H_1\cdots H_n$ of any finite number $n$ of $\mathfrak{N}$-extendible subgroups of $F$ is pro-$\mathfrak{N}$-closed;
\item the Cayley graph of $\Gamma(\wh{F_\mathfrak{N}})$ is tree-like.
\end{enumerate}
\end{Thm}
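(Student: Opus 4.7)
The implication $(2)\Rightarrow(1)$ is immediate by specializing to $n=2$, so the real content lies in $(1)\Rightarrow(3)$ and $(3)\Rightarrow(2)$. As the remark preceding the statement emphasizes, the proofs of \cite{geometry0,geometry1} rely only on the profinite-graph structure of $\Gamma(\wh{F_\mathfrak{N}})$ together with its approximation by the finite Cayley graphs $\Gamma(G)$ for $G\in\mathfrak{N}$, plus the variety-free parts of \cite{MSW}; my plan is to track that the arguments transfer verbatim once $\mathfrak{N}$-extendibility replaces $\mathbf{V}$-extendibility.

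For $(1)\Rightarrow(3)$ I would argue contrapositively. Suppose $\Gamma(\wh{F_\mathfrak{N}})$ is not tree-like; by \cite[Proposition 2.6]{geometry0} a constellation $(\Xi,g,\Theta)$ exists, and by the usual compactness/inverse limit argument it already appears in some finite $\Gamma(G)$ with $G\in\mathfrak{N}$. Viewing $\Xi$ and the translate $g^{-1}\Theta$ as folded pointed subgraphs of $\Gamma(G)$ at base vertex $1$, one reads off two finite folded $A$-graphs which, by completion inside $\Gamma(G)$, serve as Stallings automata for $\mathfrak{N}$-extendible subgroups $H_1,H_2\le F$. A reduced word $u$ labeling a path $1\to g$ inside $\Xi$ and a reduced word $v$ labeling a path $g\to 1$ inside $\Theta$ yield $w:=uv$ with $[w]_G=1$, and the constellation forces $w\in\overline{H_1H_2}\setminus H_1H_2$: any honest factorization $w=h_1h_2$ with $h_i\in H_i$ would, upon projection to $\Gamma(G)$, produce a common gluing vertex in $\Xi\cap\Theta$ connecting $1$ to $g$, contradicting their being in distinct components of $\Xi\cap\Theta$. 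This contradicts (1).

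For $(3)\Rightarrow(2)$ I would induct on $n$; the base $n=1$ is Theorem \ref{3.14} (tree-like implies Hall). For the inductive step, it suffices to show that if $K:=H_1\cdots H_{n-1}$ is already pro-$\mathfrak{N}$-closed and $H=H_n$ is $\mathfrak{N}$-extendible, then $\overline{KH}\subseteq KH$. Given $w\in\overline{KH}$, consider in $\Gamma(\wh{F_\mathfrak{N}})$ the subgraph $\mathcal{K}\cup [w]\cdot\mathcal{H}^{-1}$, where $\mathcal{K}$ is obtained by suitably concatenating the $\mathfrak{N}$-universal covers $\cH_i^{\wh{F_\mathfrak{N}}}$ along all candidate decompositions; $w$ being in the closure translates into a connectivity statement between $1$ and $[w]$ through this configuration. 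The tree-like hypothesis --- equivalently the absence of constellations --- forces the set of possible gluing vertices in $\Gamma(\wh{F_\mathfrak{N}})$ to collapse to a single coherent choice: otherwise two distinct candidate gluings would witness a constellation. This collapse descends to some finite $G\in\mathfrak{N}$, delivering an honest decomposition $w=kh$ with $k\in K$, $h\in H$, and concluding the induction.

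The main obstacle is the geometric bookkeeping in the inductive step of $(3)\Rightarrow(2)$: one must carefully convert the topological condition $w\in\overline{H_1\cdots H_n}$ into a constellation-like configuration and exploit tree-likeness to rule it out. This is precisely the analysis of \cite[Theorem 3.5, 3.9]{geometry1}, and since that analysis uses neither relative freeness of $\wh{F_\mathfrak{N}}$ nor the variety hypothesis on $\mathfrak{F}$, it transfers to a general relation-free $PQ$-category without modification.
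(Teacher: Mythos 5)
The paper does not actually prove Theorem \ref{RZTheorem}: it cites \cite{geometry1} and the preceding Remark explains that those proofs use only the geometry of $\Gamma(\wh{F_\mathfrak{N}})$ and its finite approximations (plus the variety-free parts of \cite{MSW}, which rest on Hall's theorem), hence transfer verbatim to relation-free $PQ$-categories. Your closing paragraph says exactly this, so at the citation level you coincide with the paper. The problem is that the self-contained sketches you interpose would not survive being written out, so as a reconstruction of the cited arguments the proposal has genuine gaps. In $(1)\Rightarrow(3)$: descending the constellation to a single finite $\Gamma(G)$ and concluding $w=uv\in\overline{H_1H_2}$ from $[w]_G=1$ cannot work, because membership in the pro-$\mathfrak{N}$-closure is a statement about \emph{every} $N\in\sfN$, while the existence of a constellation in some finite $\Gamma(G)$ is nearly vacuous (for $\vert A\vert\ge 2$ no finite Cayley graph in $\mathfrak{N}$ is a tree, so all of them have constellations, yet many $PQ$-categories satisfy (1)). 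What ``not tree-like'' really gives, via Theorem \ref{Cayley:tree-like}, is a $G\in\mathfrak{N}$ with a constellation dissolved by \emph{no} member of $\mathfrak{N}$, and it is that undissolvability which must be converted into $w\in H_1H_2N$ for all $N\in\sfN$. Moreover your recipe for $H_1,H_2$ fails concretely: for the pro-abelian topology on $F=\langle a,b\rangle$ the standard constellation consists of the two paths $1\to a\to ab$ and $1\to b\to ba$, and the loop subgroups of $\Xi$ and $g^{-1}\Theta$ at the base point are both trivial, so $H_1H_2=\{1\}$ is closed and witnesses nothing; the correct subgroups here are $\langle a\rangle$ and $\langle b\rangle$, which your construction does not produce, and neither $u\in H_1$ nor $v\in H_2$ in general, so no reason is given why $uv$ lies in the closure of the product.

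In $(3)\Rightarrow(2)$ the induction is set up as if $K=H_1\cdots H_{n-1}$ were an $\mathfrak{N}$-extendible subgroup with a Stallings automaton and universal cover to be glued to that of $H_n$; but $K$ is only a subset, not a subgroup, so the two-factor case does not bootstrap in this way, and the graph ``obtained by suitably concatenating the universal covers along all candidate decompositions'' is not a defined object --- the collapse of ``gluing vertices'' you invoke is precisely the content that needs proof. The cited arguments handle all $n$ simultaneously by translating $w\in\overline{H_1\cdots H_n}$ into a connectivity statement about a chain of translated universal covers $\cH_i^{\wh{F_\mathfrak{N}}}$ inside $\Gamma(\wh{F_\mathfrak{N}})$ and then using tree-likeness. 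In short: your claim that the analysis of \cite{geometry1} transfers is correct and is all the paper asserts, but the intermediate ``proof'' you supply is not a valid substitute for that analysis.
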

\subsubsection{Discussion and Problems} Theorem \ref{RZTheorem} provides \textbf{almost a characterization} of the profinite Hausdorff topologies on $F$ for which the Ribes--Zalesski\u\i-Theorem \cite{RZ} holds. Indeed, if $\Gamma(\wh{F_\mathfrak{N}})$ is tree-like then also Hall and therefore every $\mathfrak{N}$-extendible subgroup $H$ of $F$ is pro-$\mathfrak{N}$-closed; but by Margolis, Sapir and Weil \cite[Proposition 2.7]{MSW} every finitely generated pro-$\mathfrak{N}$-closed subgroup $H$ of $F$ is $\mathfrak{N}$-extendible (as already mentioned, the proof in \cite{MSW} holds for the more general context of $PQ$-categories). Hence, the properties of being $\mathfrak{N}$-extendible and of being pro-$\mathfrak{N}$-closed coincide; in particular, item (2) then implies that the product $H_1\cdots H_n$ of any finite number $n$ of finitely generated pro-$\mathfrak{N}$-closed subgroups of $F$ is closed. But the statement (2) of Theorem \ref{RZTheorem}  is formally stronger than the statement obtained by replacing ``$\mathfrak{N}$-extendible'' by ``$\mathfrak{N}$-closed''. In connection with statement (1) we are tempted to ask:
\begin{Problem} Can in statements (1) or (2) of Theorem \ref{RZTheorem} ``$\mathfrak{N}$-extendible'' be replaced by ``pro-$\mathfrak{N}$-closed''?
\end{Problem}
The most challenging open problem seems to be the following:
\begin{Problem}\label{arboreous=Hall} Is every Hall $PQ$-category arboreous? In other words, does the Hall property of the Cayley graph of a profinite group imply that it is tree-like?
\end{Problem}
The problem of whether there exist varieties which are Hall but not arboreous has been asked in several papers \cite{geometry0,power,supersolvable}, for formations this question was raised in \cite{geometry1}; recent experience by the authors indicate that even on the level of $PQ$-categories this question seems to be hard. Finally, the following problem seems to be also open:
\begin{Problem} Is every freely indexed $PQ$-category arboreous?
\end{Problem}

\section{Non-solvable formations}\label{non-solvable}
The main result in this section will be that every $A$-generated finite group $G$ admits a positive integer $N$ such that for every $n\ge N$ the alternating group $\mathds{A}_n$, subject to a properly chosen generating set $A$, is a predissolver of $G$. The approach in a sense combines an idea of Ash \cite{Ash} with Jordan's Theorem on primitive permutation groups. The result will be essentially based on our main combinatorial result:
\begin{Thm}\label{completing_to_alternating_group} Let $\cA$ be a connected incomplete inverse automaton on $m$ vertices and let  $q$ be the smallest prime larger than $m$. Then, for every $n\ge m+q+2$ there exists a permutation automaton $\cC_n$ on $n$ vertices  extending $\cA$ whose transition group $T_{\cC_n}$ is the alternating group $\mathds{A}_n$.
\end{Thm}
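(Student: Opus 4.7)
The plan is to reduce to Jordan's classical theorem on primitive permutation groups and to construct $\cC_n$ by a concrete combinatorial completion in the spirit of Ash's method. Jordan's theorem says that any primitive subgroup of the symmetric group on $n$ points that contains a $p$-cycle for a prime $p\le n-3$ already contains $\mathds{A}_n$. Since $n\ge m+q+2$, we have $q\le n-3$, so this will apply with $p=q$. It therefore suffices to construct $\cC_n$ so that its transition group $T_{\cC_n}$ has three simultaneous properties: (a) $T_{\cC_n}\subseteq \mathds{A}_n$ (equivalently, every generator $\pi_a$, $a\in A$, is an even permutation), (b) $T_{\cC_n}$ contains a $q$-cycle, and (c) $T_{\cC_n}$ is primitive. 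These three together force $T_{\cC_n}=\mathds{A}_n$.

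I would label the $n-m\ge q+2$ new vertices as $w_1,\dots,w_{n-m}$ and pick a distinguished letter $a_0\in A$ (available because $|A|\ge 2$). Define $\pi_{a_0}$ to act as a standalone $q$-cycle $(w_1\,w_2\,\cdots\,w_q)$ on the first $q$ new vertices, and extend its partial injection on $V_\cA$ to a permutation by pairing each dangling chain end with a dangling chain start, either directly inside $V_\cA$ or by routing through some of the spare vertices $w_{q+1},\dots,w_{n-m}$ (the remaining new vertices becoming $\pi_{a_0}$-fixed). Every cycle of $\pi_{a_0}$ other than the distinguished $q$-cycle then has length at most $m<q$; since $q$ is prime, the integer $L:=\mathrm{lcm}$ of those remaining cycle lengths is coprime to $q$, and $\pi_{a_0}^{L}$ is a non-trivial power of $(w_1\,w_2\,\cdots\,w_q)$ and the identity elsewhere --- a genuine $q$-cycle in $T_{\cC_n}$. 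This takes care of (b). For each remaining letter $a\in A\setminus\{a_0\}$, I would extend its partial injection to a full permutation on $V_{\cC_n}$, using the freedom in how chains are closed to simultaneously achieve (a), the connectedness of $\cC_n$ (and hence transitivity of $T_{\cC_n}$), and (c).

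The main obstacle will be securing primitivity in concert with evenness. Transitivity is easy: $\cA$ is connected, so as soon as one non-$a_0$ generator bridges $V_\cA$ to the new vertices, $\cC_n$ is connected and $T_{\cC_n}$ is transitive. Evenness of each $\pi_a$ can be enforced by local parity adjustments --- reversing one pair of chain completions, or lengthening a routing by transposing two new vertices, flips the total parity --- and with $\ge q+2$ new vertices there is enough room to do this without spoiling transitivity. Primitivity is the genuinely global condition: one must guarantee that no non-trivial partition of $\{1,\dots,n\}$ is preserved by every $\pi_a$. I expect the argument to proceed by designing the non-$a_0$ extensions so that $T_{\cC_n}$ is in fact $2$-transitive (the stabilizer of one cleverly chosen vertex acting transitively on the remaining $n-1$), leveraging the $q$-cycle together with the routing freedom in $\{w_{q+1},\dots,w_{n-m}\}$. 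Alternatively, one can rule out hypothetical block systems directly: since $q$ is prime, any preserved block system must either split $\{w_1,\dots,w_q\}$ into singletons or lump it into one block, and both can be obstructed by the action of a second, carefully chosen generator that mixes old and new vertices. The case $m=1$ (where $q=2$ so a $q$-cycle is an odd transposition) falls outside the Jordan--plus--evenness strategy and would be handled separately by directly exhibiting generators of $\mathds{A}_n$ on the $n\ge 5$ vertices, which is possible because the partial injections at a single vertex are (almost) entirely empty.
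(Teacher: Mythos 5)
Your overall strategy is exactly the paper's: complete $\cA$ by even permutations so that the transition group is transitive, primitive, and contains a $q$-cycle with $q\le n-3$, then invoke Jordan's theorem to conclude $T_{\cC_n}=\mathds{A}_n$ (with small $m$ handled separately). However, there is a genuine gap at precisely the point you yourself flag as "the main obstacle": primitivity is never actually established. You offer two possible routes (engineer $2$-transitivity, or obstruct block systems by a "carefully chosen" second generator) without carrying out either, and the choice of construction matters --- a "standalone" $q$-cycle on $a_0$ gives no leverage against block systems by itself. The paper's proof succeeds because of a specific interlocking design: the new vertices $W=\{x_1,\dots,x_q,y,z,t_1,\dots,t_k\}$ carry the $q$-cycle $x_1\to\cdots\to x_q\to x_1$ on a letter $b$ \emph{and} an $a$-labelled cycle $y\to x_2\to x_3\to t_1\to\cdots\to t_k\to z\to y$ passing through the two consecutive $q$-cycle vertices $x_2,x_3$, with $\vert W\vert>n/2$. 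The block analysis then has teeth: if the $q$ points of the $b$-cycle lie in distinct blocks, a block-mate $s$ of $x_1$ has a shorter $b$-cycle and $s\cdot b^l=s$ forces $x_1\mathrel{\mathrm P}x_{l+1}$ with $l<q$, a contradiction; if they lie in one block, then $x_2\mathrel{\mathrm P}x_3$ propagates along the $a$-cycle to put all of $W$ in a single block, which has more than half the vertices, so the partition is trivial. Without some such explicit interaction between the two generators on the new vertices, your argument does not close.

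Two smaller points. First, your claim that every non-distinguished cycle of $\pi_{a_0}$ has length at most $m<q$ is not automatic: routing a chain of $V_\cA$ through spare vertices can produce cycles of length up to $m+2$, and when $q=m+1$ such a cycle has length $\ge q$ (and could even equal $q$, destroying the "unique $q$-cycle" needed for the power trick). The paper avoids this by putting the $q$-cycle on a letter $b$ different from the incomplete letter $a$ used to attach $W$ to $V_\cA$, and by explicitly stipulating that all other $b$-cycles be strictly shorter than $q$. Second, your connectivity remark ("as soon as one non-$a_0$ generator bridges $V_\cA$ to the new vertices") can fail if $a_0$ is the only letter that is partial on $V_\cA$; a letter already total on $V_\cA$ can only permute the new vertices among themselves. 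These are repairable, but together with the missing primitivity argument they mean the proof is an outline of the right approach rather than a complete argument.
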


\begin{proof} We assume that $m\ge 3$; the cases $m\le 2$ can be checked individually and are irrelevant for the sequel. In particular, $q\ge 5$. Let $V$ be the set of vertices of $\cA$.  We choose an  integer  $k\ge 0$ and let
$$W:=\{x_1,\dots,x_q,y,z,t_1,\dots,t_k\}$$
be a set of $q+k+2$ new vertices (that is, $V\cap W=\emptyset$). Note that $\vert V\vert \le q-1$ and $q+2\le \vert W\vert$. We extend the graph $\cA$ to a graph $\cC_n$ on the vertex set $V\cup W$: choose a letter $a\in A$ which does not induce a total transformation on $V$ and a vertex $v\in V$ which is not the initial vertex of an edge labeled $a$. Then add the following edges:
\begin{itemize}
\item $v\overset{a}{\to} x_1$
\item $y\overset{a}{\to}x_2\overset{a}{\to} x_3 \overset{a}{\to}t_1 \overset{a}{\to} t_2 \overset{a}{\to}\cdots\overset{a}{\to}  t_{k-1}  \overset{a}{\to}t_k \overset{a}{\to}z \overset{a}{\to} y $
\item $x_1 \overset{b}{\to} x_2 \overset{b}{\to}\cdots \overset{b}{\to} x_q\overset{b}{\to}x_1$
\end{itemize}
for some $b\ne a$ (see Figure 1).
\begin{figure}[ht]
\begin{tikzpicture}
\filldraw  (0,1) circle (2pt);
\filldraw  (-1,0.5) circle (2pt);
\filldraw  (1,0.5) circle (2pt);
\filldraw  (2,1) circle (2pt);
\filldraw  (-2,1) circle (2pt);
\filldraw  (0,2.5) circle (2pt);
\filldraw  (1,3) circle (2pt);
\filldraw  (0,-1.5) circle (2pt);
\filldraw[gray] (-1,3) circle (2pt);
\node[above] at (-1,3) {$t_1$};
\node[below] at (0,-1.5) {$v$};
\node[below right] at (1,0.5) {$x_1$};
\node[below] at (-1,0.5) {$y$};
\node[below left] at (-2,1) {$z$};
\node[right] at (0,1) {$x_2$};
\node[below right] at (0,2.5) {$x_3$};
\node[below] at (1,3) {$x_4$};
\node[below right] at (2,1) {$x_q$};
\draw[dotted] (-3,-0.5)--(3,-0.5);
\node at (-3,1.75) {$W$};
\node at (-3,-1.5) {$V$};
\draw[->,thick] (-0.05,1.1) to [out=105, in=-105] (-0.05,2.4);
\draw[->,thick] (0.05,1.1) to [out=75, in=-75] (0.05,2.4);
\draw[->,thick] (1.9,0.95) --(1.1,0.55);
\draw[->,thick] (0.9,0.55)-- (0.1,0.95);
\draw[->,thick] (-0.9,0.55)--(-0.1,0.95);
\draw[->,thick] (-1.9,0.95)--(-1.1,0.55);
\draw[->,thick] (0.1,2.55)--(0.9,2.95);
\draw[->, thick, dotted] (-0.1,2.55)--(-0.9,2.95);
\draw[->,thick] (0.05,-1.4)--(0.95,0.4);
\draw[->,thick,dotted] (-1.1,3) .. controls (-3.3,3.2) and (-2.8,1.5).. (-2.05,1.1);
\draw[->,thick,dotted] (1.1,3) .. controls (3.3,3.2) and (2.8,1.5).. (2.05,1.1);
\node[below right] at (0.3,-0.5) {$a$};
\node[below] at (1.6,0.85) {$b$};
\node[below] at (0.5,0.85) {$b$};
\node[right] at (0.05,1.75) {$b$};
\node[above] at (0.5,2.75) {$b$};
\node at (2.5,2.4) {$b$};
\node[below] at (-1.6,0.85) {$a$};
\node[below] at (-0.5,0.85) {$a$};
\node[left] at (-0.05,1.75) {$a$};
\node[above] at (-0.5,2.75) {$a$};
\node at (-2.5,2.4) {$a$};
\end{tikzpicture}
\caption{ }
\end{figure}
Up to now, the action of $a^{\pm 1}$ is still undefined on $x_4,\dots,x_q$ and possibly on elements of $V$, that of $b^{\pm 1}$ is still undefined on $y,z,t_1,\dots,t_k$ and possibly elements of $V$; the actions of all other letters are undefined on all of $W$ and possibly elements of $V$. In particular, the actions of $a^{\pm 1}$ as well as of $b^{\pm 1}$ are undefined on at least two vertices. We can extend the actions of all letters to total even permutations on $V\cup W$ in a way that all $b$-cycles, except $x_1\to x_2\to\cdots\to x_q\to x_1$ have lengths (strictly) smaller than $q$. Thereby we get a permutation automaton $\cC_n$ which extends $\cA$. The transition group $T_{\cC_n}$ is a subgroup of the alternating group $\mathds{A}_n$ where $n=\vert V\vert +\vert W\vert= m+q+k+2$.

Since all $b$-cycles (except $x_1\to x_2\to\dots\to x_q\to x_1$) have lengths smaller than $q$, a certain power $b^r$ of $b$ consists entirely of the cycle $x_1\to\dots\to x_q\to x_1$ (and fixes all other vertices). Altogether, $T_{\cC_n}$ contains a cycle of prime length $q<\vert V\vert +\vert W\vert -2= m+q+k$. Since $\cC_n$ is obviously connected, $T_{\cC_n}$ acts transitively on $V\cup W$. We show that $T_{\cC_n}$ is primitive: it suffices to show that $T_{\cC_n}$ does not leave invariant any non-trivial equivalence relation $\mathrm P$. Suppose that $\mathrm P$ is an equivalence relation on $V\cup W$ left invariant under $T_{\cC_n}$.  Then all blocks of $\mathrm P$  have the same size. The cycle $x_1\to \dots x_q\to x_1$ of prime length $q$  either belongs to a single block of $\mathrm P$, or its elements belong to $q$ distinct blocks. Suppose that the latter is true and that $\mathrm{P}$ is not the identity relation.   Then $x_1\mathrel{\mathrm P}s$ for some $s\notin \{x_1,\dots, x_q\}$ (since no block can be a singleton). By construction, the $b$-cycle of $s$ is shorter than $q$, that is, there exists $l< q$ such that $s\cdot b^l=s$. Now $x_1\mathrel{\mathrm P} s$ implies
$$x_1\ne x_{l+1}=x_1\cdot b^l\mathrel{\mathrm P}s\cdot b^l=s\mathrel{\mathrm P} x_1,$$
that is, $x_1\ne x_{l+1}\mathrel{\mathrm{P}} x_1$, a contradiction to the assumption that all $x_i$ are in distinct blocks. So, suppose that all elements $x_1,\dots,x_q$ belong to a single block of $\mathrm P$. In particular, $x_2\mathrel{\mathrm P} x_3$ which implies $x_2\cdot a^l\mathrel{\mathrm P}x_3\cdot a^l$ for all $l\ge 1$. So we get:
\[
\begin{aligned}
x_2\mathrel{\mathrm P} x_3=x_2\cdot a \mathrel{\mathrm P}x_3\cdot a
    &= t_1=x_2\cdot a^2\mathrel{\mathrm P}x_3\cdot a^2\\
    &= t_2=x_2\cdot a^3\mathrel{\mathrm P} x_3\cdot a^3\\
    &\ \ \vdots\\
    &=t_k=x_2\cdot a^{k+1}\mathrel{\mathrm P}x_3\cdot a^{k+1}\\
    &=z=x_2\cdot a^{k+2}\mathrel{\mathrm P}x_3\cdot a^{k+2}=y.
\end{aligned}
\]
Altogether, all of $W$ would belong to a single block of $\mathrm P$;  since $\vert W\vert >\frac{\vert W\cup V\vert}{2}$ this implies that $\mathrm{P}$ has only one block. By Jordan's Theorem \cite[Theorem 3.3E]{Dixon and Mortimer}, $T_{\cC_n}=\mathds{A}_n$.

\end{proof}

We continue with some further prerequisites. For the following, we fix an $A$-generated group $G$ with Cayley graph $\Gamma=\Gamma(G)$.
The set of all constellations of $G$ is partially ordered by
$$(\Xi,g,\Theta)\le (\Xi',g',\Theta') \Longleftrightarrow \Xi\subseteq \Xi', g=g',\Theta\subseteq \Theta'.$$
A predissolver $H$ of some constellation is also a predissolver of every smaller constellation. It is therefore sufficient to consider maximal constellations. We give a description of the maximal constellations of $G$ in terms of cut sets of $\Gamma$. Consider a  \emph{minimal cut set} in $\Gamma$, that is, a set $C$ of (geometric) edges such that the graph $\Gamma\setminus C$ has two connected components, but $\Gamma\setminus C'$ is connected for every proper subset $C'$ of $C$ (recall that by a \emph{geometric edge} we mean a pair $f^{\pm 1}$ where $f$ is an edge of $\Gamma$). Now decompose the set $C$ into two disjoint  {non-empty} subsets: $C=C_\Xi\cup C_\Theta$ and set $$\Xi:=\Gamma\setminus C_\Theta\text{ and }\Theta:=\Gamma\setminus C_\Xi.$$
\begin{Prop} Let $\Xi,\Theta,C$, etc., be as above and let $g$ be a vertex not in the connected component of $1$ in $\Gamma\setminus C$; then $(\Xi,g,\Theta)$ is a maximal constellation, and conversely, every maximal constellation can be so constructed.
\end{Prop}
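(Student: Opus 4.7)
I plan to prove both implications of the characterization, with the real work concentrated in the converse.

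\emph{Forward direction.} Given a minimal cut set $C$ with non-empty disjoint decomposition $C = C_\Xi \sqcup C_\Theta$, I would first note that $\Xi \cap \Theta = \Gamma \setminus C$, which is disconnected by hypothesis with $1, g$ in distinct components. Since each $C_\bullet$ is a proper subset of $C$, minimality of $C$ makes $\Xi = \Gamma \setminus C_\Theta$ and $\Theta = \Gamma \setminus C_\Xi$ connected, so $(\Xi, g, \Theta)$ is a constellation. For maximality, any enlargement $(\Xi', g, \Theta') \geq (\Xi, g, \Theta)$ has the form $\Xi' = \Gamma \setminus D_\Xi$, $\Theta' = \Gamma \setminus D_\Theta$ with $D_\Xi \subseteq C_\Theta$, $D_\Theta \subseteq C_\Xi$ (hence disjoint, and with union contained in $C$). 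The constellation condition forces $\Gamma \setminus (D_\Xi \cup D_\Theta)$ to be disconnected, which by minimality of $C$ is impossible unless $D_\Xi \cup D_\Theta = C$, forcing $\Xi' = \Xi$ and $\Theta' = \Theta$.

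\emph{Converse direction.} Given a maximal constellation $(\Xi, g, \Theta)$, I would first establish three structural properties: (a) $V(\Xi) = V(\Theta) = V(\Gamma)$; (b) $\Xi \cup \Theta = \Gamma$ as graphs; (c) $\Xi \cap \Theta$ has exactly two connected components. Each is proved by contradiction: a violation of (a), (b) or (c) produces a witness edge (a boundary edge for (a) and (c); an edge missed by both $\Xi$ and $\Theta$ for (b)) whose adjunction to one of $\Xi, \Theta$ preserves connectedness and does not merge the components of $1$ and $g$ in the intersection, producing a strictly larger constellation. With (a)--(c) in hand, I would set $C_\Theta := E(\Gamma) \setminus E(\Xi)$, $C_\Xi := E(\Gamma) \setminus E(\Theta)$, and $C := C_\Xi \cup C_\Theta$. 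Property (b) makes the decomposition disjoint, and both parts are non-empty, since otherwise $\Xi$ or $\Theta$ would equal $\Xi \cap \Theta$ and fail to be connected. Then $\Gamma \setminus C = \Xi \cap \Theta$ has two components by (c), while the identities $\Xi = \Gamma \setminus C_\Theta$, $\Theta = \Gamma \setminus C_\Xi$ reproduce the construction of the proposition. Minimality of $C$ follows because for any $f \in C$, if $\Gamma \setminus (C \setminus \{f\})$ were still disconnected, then adjoining $f$ to the appropriate side ($\Theta$ if $f \in C_\Xi$, or $\Xi$ if $f \in C_\Theta$) would give a constellation strictly containing $(\Xi, g, \Theta)$; maximality therefore forces $\Gamma \setminus (C \setminus \{f\})$ to be connected, and monotonicity extends this to all proper subsets $C' \subsetneq C$.

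The main obstacle is property (c). For a hypothetical third component $K$ of $\Xi \cap \Theta$, I would use connectedness of $\Gamma$ to produce a boundary edge $e$ of $K$ in $\Gamma$, and then distinguish whether $e$ lies in $\Xi \setminus \Theta$, $\Theta \setminus \Xi$, or $\Xi \cap \Theta$; the last is ruled out at once because any edge of $\Xi \cap \Theta$ places its two endpoints in the same component of $\Xi \cap \Theta$. The subtlety in the remaining two cases is to check that adjoining $e$ to the side not already containing it never merges the components of $1$ and $g$; this uses the observation that a single edge can merge at most two components, one of which is $K$, which by choice is neither the component of $1$ nor that of $g$.
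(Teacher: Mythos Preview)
Your proposal is correct and follows essentially the same approach as the paper's proof. Both arguments establish maximality in the forward direction via the minimality of $C$, and in the converse direction both exploit maximality to force $V(\Xi)=V(\Theta)=V(\Gamma)$ and to rule out extra components of $\Xi\cap\Theta$ by adjoining a boundary edge. Two minor organizational differences: the paper defines $C_\Xi$ and $C_\Theta$ geometrically (as the edges of $\Xi$, respectively $\Theta$, joining the two components $\Omega_1$, $\Omega_g$) and packages your (b) together with a stronger statement $\Gamma=\Omega_1\cup\Omega_g\cup C_\Xi\cup C_\Theta$, all dismissed as ``by maximality''; and the paper declares the resulting cut set ``clearly minimal'' without further comment, whereas you supply the argument. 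Your handling of (c) adds the edge to only one side rather than to both as the paper does, but the effect on $\Xi\cap\Theta$ is identical, so this is inessential.
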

\begin{proof} It is clear that $(\Xi,g,\Theta)$ is maximal: the only possibility to extend it would be to add an edge of $C_\Xi$ to $\Theta$ or to add an edge of $C_\Theta$ to $\Xi$: in both cases the intersection of the resulting two graphs would be connected.

Let, conversely, $(\Xi,g,\Theta)$ be a maximal constellation; let $\Omega_1$ resp.~$\Omega_g$ be the connected component of $1$ resp.~$g$ in the graph $\Xi\cap\Theta$. By the maximality of $(\Xi,g,\Theta)$, $V(\Xi)=V(\Gamma)=V(\Theta)$ and $V(\Gamma)=V(\Omega_1)\cup V(\Omega_g)$. To see the former, assume w.l.o.g.~that $V(\Xi)\ne V(\Gamma)$. Then there exists a vertex $v$ and a geometric edge $e^{\pm 1}$ not in $\Xi$ such that $\Xi^*:=\Xi\cup \{e^{\pm 1},v\}$ is connected. By maximality,  {the graph $\Xi^*\cap \Theta$ has $1$ and $g$ in the same connected component}, hence $e^{\pm 1}$ connects $\Omega_1$ with $\Omega_g$ so that  {$v\in \Omega_1\cup\Omega_g\subseteq \Xi$}, a contradiction. To see the latter, note that if $\Xi\cap\Theta$ had another connected component $\Omega\notin\{\Omega_1,\Omega_g\}$, then $(\Xi\cup\{e^{\pm1}\},g,\Theta\cup\{e^{\pm1}\})$, where $e^{\pm1}$ is any geometric edge having one vertex in $\Omega$ and the other outside $\Omega$, would be a larger constellation in $\Gamma$,  {which again leads to a} contradiction. Now let $C_\Xi$ resp.~$C_\Theta$ be the set of all geometric edges of $\Xi$ resp.~$\Theta$ having one vertex in $\Omega_1$ and the other vertex in $\Omega_g$. Again by the maximality of $(\Xi,g,\Theta)$, we have $\Gamma=\Omega_1\cup\Omega_g\cup C_\Xi\cup C_\Theta$, so $C_\Xi\cup C_\Theta$ is a (clearly minimal) cut set in $\Gamma$ and $\Xi=\Gamma\setminus C_\Theta$ as well as $\Theta=\Gamma\setminus C_\Xi$ hold, as required.
\end{proof}

Let us now consider the set $\bm{\mathcal{MC}}$ of all pairs $(\Xi,\Theta)$ where $(\Xi,g,\Theta)$ is a maximal constellation for some $g$. For $(\Xi,\Theta)\in \bm{\mathcal{MC}}$  form the disjoint union $\Xi\sqcup \Theta$ and identify both basepoints $1$; the resulting graph is not folded, but we denote by $\Xi\mathrel{\underset{1}{\sqcup}}\Theta$ the largest folded quotient \cite{MSW, kapmas, Stallings}. This graph can be described differently: let $C=C_\Xi\cup C_\Theta$ be the minimal cut set of $\Gamma$ giving rise to the pair $(\Xi,\Theta)$ and let $\Delta\cup \Upsilon=\Gamma\setminus C$ where $\Delta$ is the connected component of $1$ and $\Upsilon$ is the other component (Figure 2).
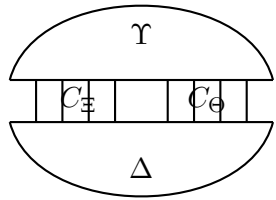
\begin{figure}[ht]
\begin{tikzpicture}[scale=0.7]
\draw[thick] (0,0)--(5,0);
\draw[thick] (0,0) to [out=-75,in=-105](5,0) ;
\draw[thick] (0.5,0)--(0.5,0.8);
\draw[thick] (1,0)--(1,0.8);
\draw[thick] (1.5,0)--(1.5,0.8);
\draw[thick] (2,0)--(2,0.8);
\draw[thick] (3,0)--(3,0.8);
\draw[thick] (3.5,0)--(3.5,0.8);
\draw[thick] (4,0)--(4,0.8);
\draw[thick] (4.5,0)--(4.5,0.8);
\draw[thick] (0,0.8)--(5,0.8);
\draw[thick] (0,0.8) to [out=75,in=105] (5,0.8);
\node[above] at (1.3,0) {$C_\Xi$};
\node[above] at (3.75,0) {$C_\Theta$};
\node[below] at (2.5,-0.5) {$\Delta$};
\node[above] at (2.5,1.3) {$\Upsilon$};
\end{tikzpicture}
\caption{$\Gamma=\Delta\cup C\cup \Upsilon$}
\end{figure}

Consider two disjoint copies of $\Upsilon$, denoted $\Upsilon_\Xi$ and $\Upsilon_\Theta$, and form the graph
$$\Upsilon_\Xi\cup C_\Xi\cup \Delta\cup C_\Theta\cup \Upsilon_\Theta$$ (Figure 3)
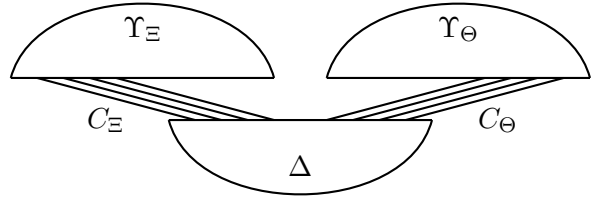
\begin{figure}[ht]
\begin{tikzpicture}[scale=0.7]
\draw[thick] (0,0)--(5,0);
\draw[thick] (0,0) to [out=-75,in=-105](5,0) ;
\draw[thick] (0.5,0)--(-2.5,0.8);
\draw[thick] (1,0)--(-2,0.8);
\draw[thick] (1.5,0)--(-1.5,0.8);
\draw[thick] (2,0)--(-1,0.8);

\draw[thick] (3,0)--(6,0.8);
\draw[thick] (3.5,0)--(6.5,0.8);
\draw[thick] (4,0)--(7,0.8);
\draw[thick] (4.5,0)--(7.5,0.8);
\draw[thick] (-3,0.8)--(2,0.8);
\draw[thick] (-3,0.8) to [out=75,in=105] (2,0.8);
\node at (-1.2,0) {$C_\Xi$};
\node at (6.25,0) {$C_\Theta$};
\node[below] at (2.5,-0.5) {$\Delta$};
\node[above] at (-0.5,1.3) {$\Upsilon_\Xi$};
\node[above] at (5.5,1.3) {$\Upsilon_\Theta$};
\draw[thick] (3,0.8)--(8,0.8);
\draw[thick] (3,0.8)to [out=75,in=105] (8,0.8);
\end{tikzpicture}
\caption{$\Xi\mathrel{\protect\underset{1}{\sqcup}}\Theta=\Upsilon_\Xi\cup C_\Xi\cup\Delta\cup C_\Theta\cup \Upsilon_\Theta$}
\end{figure}
where the edges $C_\Xi$ connect appropriate vertices of $\Delta$ and $\Upsilon_\Xi$ while those of $C_\Theta$ connect vertices of $\Delta$ with those of $\Upsilon_\Theta$  such that
$$ \Xi\cong\Delta\cup C_\Xi\cup \Upsilon_\Xi\text{ and }\Theta\cong\Delta\cup C_\Theta\cup \Upsilon_\Theta.$$
(Recall that $\Xi=\Delta\cup C_\Xi\cup \Upsilon$ and $\Theta=\Delta\cup C_\Theta\cup \Upsilon$.)

 It is easy to see that $\Xi\mathrel{\underset{1}{\sqcup}}\Theta$ is incomplete (at least one letter $a$ induces a non-total transformation); for example, the copy in $\Upsilon_\Theta$ of the end-vertex in $\Upsilon$ of every edge of $C_\Xi$ admits a letter $a\in A$ for which the induced transformation or its inverse is not defined. From the construction, the following is immediate.
\begin{Prop} \label{witness}\begin{enumerate}
\item The transition group $T$ of any completion of $\Xi\mathrel{\underset{1}{\sqcup}}\Theta$ is a predissolver of every maximal constellation of the form $(\Xi,g,\Theta)$ (and hence of every smaller constellation).
\item If $\cA$ is an inverse automaton which contains $\Xi\mathrel{\underset{1}{\sqcup}}\Theta$ for every $(\Xi,\Theta)\in \bm{\mathcal{MC}}$ as a subgraph then the transition group $T$ of any completion of $\cA$ is a predissolver of every constellation of $G$.
\end{enumerate}
\end{Prop}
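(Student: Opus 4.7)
The plan is to exploit the geometric separation built into $\Xi\mathrel{\underset{1}{\sqcup}}\Theta$: every vertex $g\in\Upsilon$ of $\Gamma(G)$ has two distinct preimages $g_\Xi\in\Upsilon_\Xi$ and $g_\Theta\in\Upsilon_\Theta$, and a labeled path from $1$ cannot reach one side from the other without traversing an edge of $C_\Xi$ or of $C_\Theta$.

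For (1), I would fix $(\Xi,\Theta)\in\bm{\mathcal{MC}}$, a completion $\cC$ of $\Xi\mathrel{\underset{1}{\sqcup}}\Theta$, and set $T:=T_\cC$, viewed as an $A$-generated group acting on $V(\cC)$ with distinguished vertex $1$. Pick any $g$ such that $(\Xi,g,\Theta)$ is a maximal constellation (so $g\in\Upsilon$) and consider a witness pair $(u,v)\in F\times F$: $[u]_G=g=[v]_G$, $u$ labels a path $1\to g$ inside $\Xi$, and $v$ labels a path $1\to g$ inside $\Theta$. The subgraph $\Delta\cup C_\Xi\cup\Upsilon_\Xi$ of $\Xi\mathrel{\underset{1}{\sqcup}}\Theta$ is canonically isomorphic, as a pointed labeled graph, to $\Xi$, so the $u$-labeled path from $1$ lifts into this subgraph and terminates at $g_\Xi$. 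Symmetrically, the $v$-labeled path lifts into $\Delta\cup C_\Theta\cup\Upsilon_\Theta\cong\Theta$ and terminates at $g_\Theta$. Both paths remain defined in the completion $\cC$ and the two endpoints are distinct vertices of $\cC$, giving
$$
1\cdot[u]_T \;=\; g_\Xi \;\ne\; g_\Theta \;=\; 1\cdot[v]_T,
$$
so $[u]_T\ne[v]_T$. Thus $T$ dissolves $(\Xi,g,\Theta)$. A witness pair for a smaller constellation $(\Xi',g,\Theta')$ with $\Xi'\subseteq\Xi$, $\Theta'\subseteq\Theta$ is a fortiori a witness pair for $(\Xi,g,\Theta)$, so $T$ also dissolves all smaller constellations. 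Part (2) then follows by running the identical argument inside $\cA$: any completion $\cC$ of $\cA$ simultaneously extends each embedded copy of $\Xi\mathrel{\underset{1}{\sqcup}}\Theta$ (with common base point $1$), so $T_\cC$ dissolves every maximal constellation $(\Xi,g,\Theta)$ of $G$, and hence every constellation of $G$, so $T_\cC$ is a predissolver of $G$.

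The one point that deserves explicit verification — though it is not a serious obstacle — is that the lift of a path labeled by a word running in $\Xi$ really does terminate at $g_\Xi$ rather than $g_\Theta$. This rests on the fact that in the folded graph $\Xi\mathrel{\underset{1}{\sqcup}}\Theta$ the two $\Delta$-copies collapse into one, while $\Upsilon_\Xi$ and $\Upsilon_\Theta$ remain disjoint: since $C_\Xi$ and $C_\Theta$ are disjoint sets of edges of $\Gamma(G)$ and each edge $(v,a)$ is determined by its source and label, at no vertex of $\Delta$ is a $C_\Xi$-edge identified with a $C_\Theta$-edge under folding, so an extension (in particular $\cC$) cannot conflate $\Upsilon_\Xi$ with $\Upsilon_\Theta$.
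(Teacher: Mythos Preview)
Your proof is correct and follows essentially the same route as the paper's: both lift the $u$- and $v$-labeled paths from the base vertex of $\Xi\mathrel{\underset{1}{\sqcup}}\Theta$ to the distinct endpoints $g_\Xi$ and $g_\Theta$, whence $[u]_T\ne[v]_T$ in any completion. One small remark on part~(2): the statement does not require the embedded copies of $\Xi\mathrel{\underset{1}{\sqcup}}\Theta$ in $\cA$ to share a common base vertex, so the paper phrases the conclusion as ``there is a vertex $h$ in $\cA$ with $h\cdot u\ne h\cdot v$'' rather than $1\cdot u\ne 1\cdot v$; your argument carries over verbatim with this adjustment.
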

\begin{proof} For (1), let $(\Xi,g,\Theta)$ be a maximal constellation and $C_\Xi, C_\Theta, \Delta, \Upsilon$ be such that  $\Gamma=\Delta\cup (C_\Xi\cup C_\Theta)\cup \Upsilon$, $\Xi=\Delta\cup C_\Xi\cup \Upsilon$ and $\Theta=\Delta\cup C_\Theta\cup \Upsilon$ as in the construction. Then $g\in \Upsilon$; denote the $\Xi$- and $\Theta$-version, respectively, of $g$ in  $\Xi\mathrel{\underset{1}{\sqcup}}\Theta$, that is, in $\Upsilon_\Xi$ respectively $\Upsilon_\Theta$, by $g_\Xi$ and $g_\Theta$. Let $u, v\in F$ with $[u]_G=g=[v]_G$ be such that the path $u:1\to g$ in $\Gamma$ runs inside $\Xi$ while the path $v:1\to g$ in $\Gamma$ runs inside $\Theta$. Then, in $\Xi\mathrel{\underset{1}{\sqcup}}\Theta$, $1\cdot u=g_\Xi\ne g_\Theta=1\cdot v$. It follows that in the transition group  $T$ of any completion of $\Xi\mathrel{\underset{1}{\sqcup}}\Theta$, $[u]_T\ne[v]_T$. The idea to this construction and argument comes from Ash's paper \cite{Ash}.

(2) As in (1), if $u,v$ are words as in (1) for some maximal constellation $(\Xi,g,\Theta)$ of $G$, then since $\Xi\mathrel{\underset{1}{\sqcup}}\Theta$ is a subgraph of $\cA$, there is a vertex $h$ in $\cA$ for which $h\cdot u\ne h\cdot v$. For the same reason as in (1), $[u]_T\ne[v]_T$ for the transition group $T$ of every completion of $\cA$.
\end{proof}
In order to continue, denote the set $\{\Xi\mathrel{\underset{1}{\sqcup}}\Theta\mid (\Xi,\Theta)\in \bm{\mathcal{MC}}\}$ by $\bm{\mathcal{AMC}}$. Now, for every $\Xi\mathrel{\underset{1}{\sqcup}}\Theta\in \bm{\mathcal{AMC}}$ choose a letter $a\in A$ which induces a non-total transformation on $\Xi\mathrel{\underset{1}{\sqcup}}\Theta$. This induces a partition of the set $\bm{\mathcal{AMC}}$: for each $a\in A$ let $\bm{\mathcal{AMC}}_a$ be the set of all members $\Xi\mathrel{\underset{1}{\sqcup}}\Theta$ of $\bm{\mathcal{AMC}}$ for which we have chosen the letter $a$. In case $\bm{\mathcal{AMC}}_a$ is empty it should be ignored in the following arguments. (Under certain conditions, e.g. if $A$ is irredundant,  for every $a\in A$ there exists some $\Xi\mathrel{\underset{1}{\sqcup}}\Theta\in \bm{\mathcal{AMC}}$ for which the only choice is $a$). Now denumerate the members of $\bm{\mathcal{AMC}}_a$ somehow, say,
$$\Xi_{a1}\mathrel{\underset{1}{\sqcup}}\Theta_{a1}, \Xi_{a2}\mathrel{\underset{1}{\sqcup}}\Theta_{a2},\dots,\Xi_{at_a}\mathrel{\underset{1}{\sqcup}}\Theta_{at_a}.$$
Next form the disjoint union of these graphs and produce a connected folded graph by adding appropriate edges:
$$\Xi_{a1}\mathrel{\underset{1}{\sqcup}}\Theta_{a1}\mathrel{\overset{a}{\to} } \Xi_{a2}\mathrel{\underset{1}{\sqcup}}\Theta_{a2}\mathrel{\overset{a}{\to}}\dots\mathrel{\overset{a}{\to}}\Xi_{at_a}
\mathrel{\underset{1}{\sqcup}}\Theta_{at_a}$$
(recall that for every $j$, $a$ induces a non-total transformation on the graph $\Xi_{aj}\mathrel{\underset{1}{\sqcup}}\Theta_{aj}$) and denote the resulting graph by $\mathrsfs{AMC}_a$ ($a$ is still a non-total transformation on $\mathrsfs{AMC}_a$).

Finally, form the disjoint union of all $\mathrsfs{AMC}_a$, $a\in A$, add a new vertex $s$ and for all $a$ add an appropriate edge $\mathrsfs{AMC}_a\mathrel{\overset{a}{\to}}s$  and denote the resulting graph $\cA G$, which is a connected incomplete inverse automaton. From the construction and Proposition \ref{witness}, the following is immediate.
\begin{Lemma}\label{total_predissolver}\begin{enumerate}
\item The connected inverse automaton $\cA G$ contains every $\Xi\mathrel{\underset{1}{\sqcup}}\Theta\in \bm{\mathcal{AMC}}$ as a subgraph.
\item The transition group of any completion $\cC$ of $\cA G$ is a predissolver of every constellation $(\Xi,g,\Theta)$ of $G$, hence a predissolver of $G$.
\end{enumerate}
\end{Lemma}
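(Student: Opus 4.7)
The proof is essentially an unpacking of the construction of $\cA G$ followed by a direct application of Proposition \ref{witness}. I would split it along the two items, tackling them in order.

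For part (1), the plan is to trace the assembly of $\cA G$ in two stages and verify at each stage that no folding is forced. By the very rule by which $\bm{\mathcal{AMC}}$ was partitioned, for every $\Xi\mathrel{\underset{1}{\sqcup}}\Theta \in \bm{\mathcal{AMC}}_a$ the letter $a$ induces a non-total transformation, so at least one vertex admits no outgoing $a$-edge (or no incoming $a$-edge). Hence, within each class $\bm{\mathcal{AMC}}_a$ the connecting $a$-edges $\Xi_{a,j}\mathrel{\underset{1}{\sqcup}}\Theta_{a,j}\overset{a}{\to}\Xi_{a,j+1}\mathrel{\underset{1}{\sqcup}}\Theta_{a,j+1}$ can be attached at positions where $a$ was previously undefined; since the underlying disjoint union has disjoint vertex sets, no identifications are forced and each $\Xi_{a,j}\mathrel{\underset{1}{\sqcup}}\Theta_{a,j}$ is preserved as a subgraph of $\mathrsfs{AMC}_a$. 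The same argument applies to the second stage: the $\mathrsfs{AMC}_a$ are disjoint, and the edges $\mathrsfs{AMC}_a\overset{a}{\to}s$ to the new vertex $s$ can again be placed at positions where $a$ is non-total, so $\cA G$ remains folded and contains each $\mathrsfs{AMC}_a$, and hence each $\Xi\mathrel{\underset{1}{\sqcup}}\Theta$, as a subgraph. Connectedness follows because $s$ is reachable from every piece, and incompleteness follows because at $s$ (and elsewhere) many transitions remain undefined.

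For part (2), once (1) is in hand the argument is immediate. Any completion $\cC$ of $\cA G$ also contains every $\Xi\mathrel{\underset{1}{\sqcup}}\Theta \in \bm{\mathcal{AMC}}$ as a subgraph, so Proposition \ref{witness}(2) applies directly and yields that the transition group $T_\cC$ dissolves every maximal constellation of $\Gamma(G)$. Because a predissolver of a constellation is automatically a predissolver of every smaller constellation (by the partial order on constellations and the definition of dissolution), $T_\cC$ in fact dissolves every constellation of $\Gamma(G)$; by Definition \ref{dissolver}(1) this is exactly the statement that $T_\cC$ is a predissolver of $G$.

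The principal obstacle, to the extent that there is one, is the bookkeeping in part (1): one has to confirm that at every edge insertion the chosen letter was indeed undefined at the attachment vertex, so that no pair of vertices from distinct members of $\bm{\mathcal{AMC}}$ is identified during folding. This is ensured by the careful way in which the letters $a$ were chosen and the classes $\bm{\mathcal{AMC}}_a$ were formed; once this is recorded, part (2) is then a one-line invocation of Proposition \ref{witness} together with Definition \ref{dissolver}.
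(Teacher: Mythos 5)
Your proposal is correct and takes essentially the same route as the paper, which simply declares the lemma ``immediate from the construction and Proposition \ref{witness}''; your part (1) spells out the routine check that the connecting $a$-edges can be inserted without forcing identifications, which works because a non-total partial injection on a finite vertex set is also non-surjective, so each piece has both a vertex with no outgoing and a vertex with no incoming $a$-edge. Part (2) is exactly the intended one-line application of Proposition \ref{witness} together with the observation that dissolving all maximal constellations suffices.
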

Combination of Theorem \ref{completing_to_alternating_group} and Lemma \ref{total_predissolver} leads to:
\begin{Thm}\label{alternating-group-is-predissolver}
\begin{enumerate}
\item For every finite $A$-generated group $G$ there exists a positive integer $N\ge|G|$ such that for every $m\ge N$ the $A$-generated alternating group $\mathds{A}_m$ (subject to appropriately chosen generators) is a predissolver of $G$.
\item In case of (1), the product $\mathds{A}_m\mathrel{{\underset{A}{\times}}}G$ is a dissolver of $G$ and the kernel of the projection $\mathds{A}_m\mathrel{{\underset{A}{\times}}}G\twoheadrightarrow G$ is $\mathds{A}_m$ (which is a $2$-generated group).
\end{enumerate}
\end{Thm}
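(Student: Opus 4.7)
The plan is to combine the two preceding results, Theorem \ref{completing_to_alternating_group} and Lemma \ref{total_predissolver}. Given the $A$-generated finite group $G$, first construct the connected incomplete inverse automaton $\cA G$ of Lemma \ref{total_predissolver}; let $m_0$ denote its number of vertices (depending on $G$ and on the chosen generating sequence $A$) and let $q$ be the smallest prime strictly greater than $m_0$. Theorem \ref{completing_to_alternating_group} then supplies, for every $n \ge m_0 + q + 2$, a permutation automaton $\cC_n$ on $n$ vertices that extends $\cA G$ and whose transition group is the full alternating group $\mathds{A}_n$. The $A$-labelling of $\cC_n$ equips $\mathds{A}_n$ with a distinguished generating sequence indexed by $A$, and with respect to this sequence $\mathds{A}_n$ is by Lemma \ref{total_predissolver}(2) a predissolver of $G$. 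Choosing $N$ large enough so that $N \ge |G|$, $N \ge m_0 + q + 2$, $N \ge 5$, and $|\mathds{A}_N| > |G|$ (possible since $|\mathds{A}_m|$ tends to infinity with $m$) establishes part (1).

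For part (2), set $H := \mathds{A}_m \mathrel{\underset{A}{\times}} G$ for $m \ge N$. The observation made right after Definition \ref{dissolver} --- that the $A$-product of any predissolver of $G$ with $G$ itself is a dissolver of $G$ --- immediately shows that $H$ is a dissolver of $G$. The one delicate point, which I regard as the main obstacle, is to identify the kernel $K$ of the canonical projection $\pi_2 : H \twoheadrightarrow G$. I would view $H$ as a subdirect product inside $\mathds{A}_m \times G$ with first projection $\pi_1 : H \to \mathds{A}_m$; this $\pi_1$ is surjective because the first coordinates of the generators of $H$ are precisely the $A$-generators of $\mathds{A}_m$ inherited from the labelling of $\cC_n$. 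On $K$ the map $\pi_2$ vanishes, so $\pi_1\vert_K$ is injective and its image $\pi_1(K)$ is a normal subgroup of $\mathds{A}_m$. Since $m \ge 5$, the group $\mathds{A}_m$ is simple, hence $\pi_1(K) \in \{1, \mathds{A}_m\}$. The case $\pi_1(K) = 1$ would force $K = 1$ and thus turn $\pi_2$ into an isomorphism, which contradicts $|H| \ge |\mathds{A}_m| > |G|$ guaranteed by the choice of $N$. Therefore $K \cong \mathds{A}_m$, and the final parenthetical assertion that $\mathds{A}_m$ is $2$-generated is classical.
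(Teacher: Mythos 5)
Your proposal is correct and follows essentially the same route as the paper: part (1) is exactly the combination of Theorem \ref{completing_to_alternating_group} applied to the automaton $\cA G$ with Lemma \ref{total_predissolver}, and part (2) identifies the kernel as the first-coordinate fibre over $1$, which is a nontrivial normal subgroup of the simple group $\mathds{A}_m$ by the same cardinality comparison $|G|\le N\le m<|\mathds{A}_m|$ that the paper uses (phrased contrapositively in your version, directly via the nonexistence of a surjection $G\twoheadrightarrow\mathds{A}_m$ in the paper's). No gaps.
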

\begin{proof} Assertion (1) is an immediate consequence of Theorem \ref{completing_to_alternating_group} and Lemma \ref{total_predissolver}. For (2) we note that
$$\mathds{A}_m\mathrel{{\underset{A}{\times}}}G=\{([w]_{\mathds{A}_m},[w]_G)\mid w\in F\}.$$
Since $\vert G\vert\le N\le m$, in particular $\vert G\vert <\vert \mathds{A}_m\vert$. It follows that $G$ cannot project onto $\mathds{A}_m$, hence there exists a word $w\in F$ such that $[w]_{\mathds{A}_m}\ne1$ and $[w]_G=1$. Let
$$K:=\{x\in \mathds{A}_m\mid (x,1)\in \mathds{A}_m\mathrel{{\underset{A}{\times}}}G\}.$$
It is readily checked that $K \unlhd \mathds{A}_m$ whence $K=\mathds{A}_m$ since $K\ne\{1\}$; altogether $\mathrm{ker}(\mathds{A}_m\mathrel{{\underset{A}{\times}}}G\twoheadrightarrow G)\cong K=\mathds{A}_m$.
\end{proof}

We present some applications. Consider the profinite group
 $$\mA:=\prod_{k\ge 5} \mathds{A}_k.$$
 \begin{Cor} For every $n\ge 2$ the profinite group $\mA$ admits a generating set $A$ of size $n$ with respect to which the Cayley graph $\Gamma(\mA)$ is tree-like.
 \end{Cor}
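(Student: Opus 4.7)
The plan is to produce a generating set $A \subseteq \mA$ of size $n$ for which the associated $PQ$-category $\mathfrak{N}$ of finite continuous $A$-generated quotients of $\mA$ is arboreous; by Theorem~\ref{Cayley:tree-like} this will immediately force $\Gamma(\mA)$ to be tree-like. The key preliminary observation, via Goursat's lemma, is that since the $\mathds{A}_k$ for $k\ge 5$ are pairwise non-isomorphic simple non-abelian groups, every finite continuous quotient of $\mA$ is (as an abstract group) a full direct product $\prod_{k\in T}\mathds{A}_k$ over a finite set $T\subseteq\{5,6,\ldots\}$.

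First I would fix, once and for all, an enumeration $\mathcal G_1,\mathcal G_2,\ldots$ of all $A$-generated groups whose underlying abstract group has the form $\prod_{k\in T}\mathds{A}_k$ for some finite $T$ (only countably many such, since they correspond to certain finite-index normal subgroups of the free group on $n$ letters). For each $\mathcal G_i$, let $T_i$ denote its support; Theorem~\ref{alternating-group-is-predissolver} supplies a bound $N_i$ and, for each $m\ge N_i$, a generating $n$-tuple $\vec v_i(m)\in\mathds{A}_m^n$ making the resulting $A$-generated $\mathds{A}_m$ a predissolver of $\mathcal G_i$. I would then inductively choose pairwise distinct indices $m_i\ge\max\{N_i,5\}$ with $m_i\notin T_i\cup\{m_1,\ldots,m_{i-1}\}$ (possible since each forbidden set is finite) and decree that in the $m_i$-th coordinate of $\mA$ the generators should project to $\vec v_i(m_i)$; at every remaining index $k$, assign an arbitrary generating $n$-tuple of $\mathds{A}_k$ (which exists since $n\ge 2$). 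Writing $\vec g_k\in\mathds{A}_k^n$ for the resulting tuple at coordinate $k$, the elements $a_i:=(g_k^{(i)})_{k\ge 5}\in\mA$ form the proposed $A$; topological density of $\langle A\rangle$ in $\mA$ follows from coordinate-wise surjectivity together with the Goursat observation.

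The central verification is that the resulting $PQ$-category $\mathfrak{N}$ is arboreous. Any $G\in\mathfrak{N}$ has underlying group $\prod_{k\in T}\mathds{A}_k$, so as an $A$-generated group it coincides with some $\mathcal G_j$ in the enumeration. Set $H:=\prod_{k\in T_j\cup\{m_j\}}\mathds{A}_k$, equipped with the $A$-generation induced from the chosen generators of $\mA$; since $m_j\notin T_j$, $H$ is a genuine quotient of $\mA$ and hence lies in $\mathfrak{N}$. A direct unpacking of definitions shows $H=\mathds{A}_{m_j}\mathrel{\underset{A}{\times}} G$ as $A$-generated groups, and by construction the $A$-generation of $\mathds{A}_{m_j}$ used here is precisely $\vec v_j(m_j)$. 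Theorem~\ref{alternating-group-is-predissolver}(2) then gives that $H$ is a dissolver of $G$, and Theorem~\ref{Cayley:tree-like} concludes that $\Gamma(\mA)$ is tree-like.

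The step I expect to demand the most care is the coordination just described: the enumeration $\{\mathcal G_i\}$ must be fixed \emph{before} the generators of $\mA$ are chosen, yet every $A$-generated quotient ultimately produced by those generators must turn out to be pre-equipped with a suitable predissolver inside $\mA$. The remedy is to enumerate \emph{all} candidate $A$-generated groups of the required abstract form (not merely those arising from some preselected generating set), so that whichever $A$-generation eventually appears in $\mathfrak{N}$ is already paired with its own $\mathds{A}_{m_j}$-witness; the rest is bookkeeping and application of the general results already established.
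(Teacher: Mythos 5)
Your proof is correct, and it rests on the same two pillars as the paper's own argument: Theorem~\ref{alternating-group-is-predissolver} to manufacture alternating predissolvers with prescribed generating $n$-tuples, and Theorem~\ref{Cayley:tree-like} to convert arboreousness of the category of finite quotients into tree-likeness of $\Gamma(\mA)$, together with the observation that a subdirect product of pairwise non-isomorphic non-abelian simple groups is the full direct product (which is what makes the coordinatewise choices generate $\mA$ and identifies $H$ with $\mathds{A}_{m_j}\mathrel{\underset{A}{\times}}G$). Where you genuinely differ is in how the infinitely many choices are coordinated. The paper runs a nested induction: generators of $\mathds{A}_{n_1}$ are chosen to predissolve $\mathds{A}_5$, then generators of $\mathds{A}_{n_2}$ to predissolve the accumulated product $B_{n_1}=\prod_{i=5}^{n_1}\mathds{A}_i$, and so on, producing a cofinal chain $B_{n_1}\twoheadleftarrow B_{n_2}\twoheadleftarrow\cdots$ of dissolvers; an arbitrary finite quotient $G$ is then covered because it is a quotient of some $B_{n_j}$, which tacitly uses the fact that a dissolver of $B_{n_j}$ also dissolves every quotient of $B_{n_j}$. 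You instead pre-enumerate \emph{all} candidate $A$-generated groups of the relevant abstract shape and reserve a dedicated fresh coordinate $m_j$ for each, so that every $G\in\mathfrak{N}$ arrives already paired with an explicit dissolver $\mathds{A}_{m_j}\mathrel{\underset{A}{\times}}G$ inside $\mathfrak{N}$. The price is enumerating (and spending coordinates on) $A$-generated groups that may never actually occur as quotients of the $\mA$ you end up building --- harmless, as you note, since those coordinates still receive generating tuples; the gain is a completely direct verification for arbitrary $G$ that sidesteps the quotient-heredity of dissolvers entirely. Both routes are sound.
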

 \begin{proof}
 We need to find a generating set $A$ of size $n$ such that  every ($A$-generated) finite quotient $G$ admits an ($A$-generated) quotient $H$ which dissolves all constellations of $G$ (with respect to $A$). Let us start with some arbitrary choice $A$ of generators of $\mathds{A}_5$, that is, we consider $\mathds{A}_5$ as an $A$-generated group for some generating set of size $n$ (note that $n$ may be bigger than $60=\vert\mathds{A}_5\vert$, that is, we actually choose a mapping $\vp_5:A\to\mathds{A}_5$ such that $\vp_5(A)$ generates $\mathds{A}_5$). By Theorem \ref{alternating-group-is-predissolver} there exists $n_1>5$ such that subject to a suitable choice of generators $\vp_{n_1}:A\to\mathds{A}_{n_1}$ the group $\mathds{A}_{n_1}$ is a predissolver of $\mathds{A}_5$  {(to be precise: a predissolver of $(\mathds{A}_5,\vp_5)$)}. For every $k\ge 5$ set $B_k:=\prod_{i=5}^k \mathds{A}_i$. For $i= 6,\dots, n_1-1$ take, in every $\mathds{A}_i$, an arbitrary generating set $A$ of size $n$ via a suitable mapping $\vp_i:A\to \mathds{A}_i$; we form the direct product over all $i$ between $5$ and $n_1$ to get the group $B_{n_1}$  and the mapping $\vp_5\times\cdots\times \vp_{n_1}:A\to B_{n_1}$. As such, $B_{n_1}$ is an $A$-generated group. This can be seen, for example, by induction and the use of Proposition 2.7 in \cite{Collins} which (in the terminology of the present paper) states that the product $G\mathrel{\underset{A}{\times}}H$ of two $A$-generated groups which have no common non-trivial isomorphic quotients coincides with the full Cartesian product $G\times H$. By construction, $B_{n_1}$ is a dissolver of $\mathds{A}_5$. Next, there exists $n_2>n_1$ such that subject to a suitable choice of a generating set $A$, $\mathds{A}_{n_2}$ is a predissolver of $B_{n_1}$; again take arbitrary generating sets $A$ of size $n$ in $\mathds{A}_i$ for $i=n_1+1,\dots, n_2-1$ and form the $A$-generated group $B_{n_2}$, which then is a dissolver of $B_{n_1}$. We may continue this process by induction and obtain a generating set $A$ of $\mA$ such that $\Gamma(\mA)$ is tree-like.
 \end{proof}
 In particular, this implies that for every finite alphabet $A$ which contains at least two letters, the free group $F$ on $A$ has normal subgroups $N_i$ for $i\ge 5$ such that $F/N_i\cong \mathds{A}_i$ for every $i$, and such that, setting $\sfN$ the set of all intersections of finitely many of the groups $N_i$ then the Cayley graph of $\wh{F_\mathfrak{N}}$ is tree-like and the Ribes--Zalesski\u\i-Theorem holds for the  pro-$\mathfrak{N}$ topology of $F$ (and $\mathfrak{N}=\{F/N\mid N\in \sfN\}$).

 This also shows: in an $A$-generated profinite group $\wh{F_\mathfrak{N}}$ for which $\Gamma(\wh{F_\mathfrak{N}})$ is tree-like it may happen that $d(\wh{F_\mathfrak{N}})<\vert A\vert$. On the other hand, a generating set of smallest size in such a group not necessarily gives rise to a tree-like Cayley graph. Indeed, in every alternating group $\mathds{A}_n$ the permutation $(123)$ can be extended to a generating pair of $\mathds{A}_n$; it follows that $\mA$ admits a generating pair $a,b$ for which $a^3=1$.

 Immediately from Theorem \ref{alternating-group-is-predissolver} we get:
 \begin{Cor} Every formation which contains the alternating group $\mathds{A}_n$ for infinitely many $n$ is arboreous and therefore Hall.
 \end{Cor}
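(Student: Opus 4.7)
The plan is to reduce the statement directly to Theorem \ref{alternating-group-is-predissolver} together with closure properties of formations. Fix a finite alphabet $A$ with $|A|\ge 2$ and let $G\in\mathfrak{F}_A$ be an arbitrary $A$-generated member of $\mathfrak{F}$. I need to produce a (pre)dissolver of $G$ inside $\mathfrak{F}_A$.

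First, apply Theorem \ref{alternating-group-is-predissolver}(1) to $G$: there exists $N\ge |G|$ such that for every $m\ge N$ the group $\mathds{A}_m$, suitably $A$-generated, is a predissolver of $G$. By hypothesis, the set $\{n\mid \mathds{A}_n\in\mathfrak{F}\}$ is infinite, so I may pick some $m\ge N$ with $\mathds{A}_m\in\mathfrak{F}$. Equip $\mathds{A}_m$ with the mapping $A\to\mathds{A}_m$ furnished by Theorem \ref{alternating-group-is-predissolver}, turning it into an element of $\mathfrak{F}_A$ (the formation property only constrains the underlying abstract group, which lies in $\mathfrak{F}$).

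Now form the $A$-generated product $H:=\mathds{A}_m\mathrel{\underset{A}{\times}}G$. By Theorem \ref{alternating-group-is-predissolver}(2), $H$ is a dissolver of $G$. Since $H$ is, as an abstract group, a subdirect product of $\mathds{A}_m$ and $G$ (both of which belong to $\mathfrak{F}$), the closure of a formation under subdirect products places $H$ in $\mathfrak{F}$, and hence $H\in\mathfrak{F}_A$. As $A$ and $G\in\mathfrak{F}_A$ were arbitrary, the $PQ$-category $\mathfrak{F}_A$ is arboreous for every such $A$, so $\mathfrak{F}$ is arboreous in the sense of Definition \ref{crucial properties}. The ``therefore Hall'' part is then immediate from the implication ``arboreous $\Rightarrow$ Hall'' stated between Definition \ref{crucial properties} and this corollary.

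There is essentially no obstacle here; the only point to be careful about is that the $A$-generating map used to make $\mathds{A}_m$ a predissolver of $G$ in Theorem \ref{alternating-group-is-predissolver} is a specific one, not an arbitrary one, but this causes no trouble since membership in the formation depends only on the abstract group $\mathds{A}_m$, and $\mathfrak{F}_A$ contains \emph{every} $A$-generated group whose underlying abstract group lies in $\mathfrak{F}$.
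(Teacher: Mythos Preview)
Your proof is correct and follows exactly the route the paper intends: the paper simply states that the corollary follows ``immediately from Theorem \ref{alternating-group-is-predissolver}'', and your argument spells out precisely this deduction (pick $m\ge N$ with $\mathds{A}_m\in\mathfrak{F}$, use it as a predissolver, and invoke closure of formations under subdirect products to place the dissolver $\mathds{A}_m\mathrel{\underset{A}{\times}}G$ in $\mathfrak{F}_A$). Your added remark that membership in $\mathfrak{F}$ is a property of the abstract group, so the particular $A$-generating map supplied by Theorem \ref{alternating-group-is-predissolver} causes no issue, is exactly the one subtlety worth noting.
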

 Let $\fF_e$ be the formation generated by $\mathds{A}_{2n}$ with $n\ge 3$ and $\fF_o$ be the formation generated by $\mathds{A}_{2n+1}$ with $n\ge 2$. Both formations are arboreous, but also their join $\fF:=\fF_e\vee \fF_o$ is arboreous. This is in marked contrast to varieties: an arboreous variety must be join irreducible \cite{geometry0,power}. These formations are not freely indexed and therefore not locally extensible. Moreover, $\mathfrak{F}_o\cap\mathfrak{F}_e=\{\mathbf 1\}$, the trivial formation. For the $A$-generated free profinite objects we have $$\wh{F_{\fF}}=\wh{F_{\fF_e}}\mathrel{\underset{A}{\times}} \wh{F_{\fF_o}}=\wh{F_{\fF_e}}\times \wh{F_{\fF_o}},$$ a full direct product of two $A$-generated profinite groups with tree-like Cayley graphs which has again a tree-like Cayley graph. This is a bit surprising given the result \cite[Theorem 3.16]{supersolvable}, which implies that the direct product of two $A$-generated prosolvable groups (with $\vert A\vert \ge 2$) having no common finite quotients can never have a tree-like Cayley graph.

 \section{Solvable formations}\label{solvable} For a given alphabet $A$ we first construct a relation-free $PQ$-category of solvable groups which is freely-indexed and arboreous (therefore Hall) but not locally extensible. This gives rise to a formation having the same properties. Every inverse sequence of finite groups $\cdots\twoheadrightarrow G_2\twoheadrightarrow G_1 \twoheadrightarrow G_0$ gives rise to a $PQ$-category (by taking all quotients of all $G_n$); the $PQ$-categories of the present section will be expressed in terms of such \emph{generating sequences}.

 The approach will be based on the Gasch\"utz extension. Given an $A$-generated group $G$ and a prime $p$ then by Proposition \ref{universalSimpliesdissolver}, $G^{\mathbb{Z}/p}$ is a dissolver of $G$. Hence, if we iterate this extension then we get a sequence which generates a locally extensible (and thus arboreous, in particular relation-free) $PQ$-category. We shall modify this construction as follows: we replace $G^{\mathbb{Z}/p}$ with the quotient by its center, that is, we consider the ``less powerful'' group $G^{\mathbb{Z}/p}/Z(G^{\mathbb{Z}/p})$. The latter group can be seen to be not even a weak dissolver of $G$. However it is powerful enough that three iterations yield a dissolver. Hence this construction produces a sequence $(G_n)$ which generates an arboreous $PQ$-category. If we choose distinct primes for the iteration then we will be able to show that the $PQ$-category generated this way is in addition freely indexed but not locally extensible.

\subsection{$PQ$-categories} \subsubsection{The center of the Gasch\"utz extension} For a prime $p$, recall the definition of the {universal Gasch\"utz $p$-extension} (Section \ref{universal S extension}). From this it follows that the extension $G\mapsto G^{\mathbb{Z}/p}$ is \emph{functorial} in the following sense.
\begin{Prop} \label{Gaschuetz functorial} Let $G\twoheadrightarrow H$ be $A$-generated groups; then the following diagram commutes
\[
\begin{tikzpicture}
\matrix (m) [matrix of math nodes, row sep=3em,
column sep=3em]
{ G^{\mathbb{Z}/p} & H^{\mathbb{Z}/p} \\
G & H \\
};
\path[->>]
(m-1-1) edge (m-1-2)
(m-1-1) edge (m-2-1)
(m-1-2) edge (m-2-2)
(m-2-1) edge (m-2-2);
\end{tikzpicture}
\]
where all arrows denote canonical morphisms.
\end{Prop}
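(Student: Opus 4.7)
The natural approach is to work at the level of the free group $F$, where both Gasch\"utz extensions are realised as explicit quotients, and then read off the morphism together with the commutativity of the square from the universal property of canonical morphisms in the category of $A$-generated groups.

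Concretely, let $\varphi_G:F\twoheadrightarrow G$ and $\varphi_H:F\twoheadrightarrow H$ be the $A$-canonical morphisms, and set $R_G:=\ker\varphi_G$ and $R_H:=\ker\varphi_H$. The canonical surjection $G\twoheadrightarrow H$ is precisely the one induced by the inclusion $R_G\subseteq R_H$. As recalled in Subsection \ref{universal S extension}, the Gasch\"utz $p$-extensions admit the concrete descriptions
\[
G^{\mathbb{Z}/p}=F/R_G^p[R_G,R_G],\qquad H^{\mathbb{Z}/p}=F/R_H^p[R_H,R_H].
\]
From $R_G\subseteq R_H$ one immediately obtains $R_G^p\subseteq R_H^p$ and $[R_G,R_G]\subseteq[R_H,R_H]$, hence $R_G^p[R_G,R_G]\subseteq R_H^p[R_H,R_H]$. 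This inclusion of kernels induces a canonical $A$-generated surjection $G^{\mathbb{Z}/p}\twoheadrightarrow H^{\mathbb{Z}/p}$.

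It remains to check commutativity of the square, but this is automatic: each of the four arrows in the diagram is a canonical morphism of $A$-generated groups, and in this category there is at most one morphism between any two objects (both possible compositions are surjective $A$-generated morphisms $G^{\mathbb{Z}/p}\twoheadrightarrow H$, hence coincide with the unique canonical one). There is no real obstacle here; the only point worth noting is that the functoriality relies on the concrete description $R(C_p)=R^p[R,R]$, which is manifestly monotone in $R$ --- if one preferred to avoid that description, one could instead invoke the universal property of $G^{\mathbb{Z}/p}$ after verifying that the image of $R_G$ in $H^{\mathbb{Z}/p}$ is contained in the elementary abelian $p$-group $R_H/R_H(C_p)$, forcing $R_G^p\subseteq R_H(C_p)$ and $[R_G,R_G]\subseteq R_H(C_p)$.
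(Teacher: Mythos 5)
Your proof is correct and is essentially the intended argument: the paper gives no explicit proof (it simply asserts that functoriality ``follows'' from the construction in Subsection \ref{universal S extension}), and the natural justification is exactly your observation that $R_G\subseteq R_H$ forces $R_G^p[R_G,R_G]\subseteq R_H^p[R_H,R_H]$, with commutativity free from the uniqueness of morphisms between $A$-generated groups.
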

The set of positive edges of the Cayley graph $\Gamma(G)$ of $G$ is denoted $E$ and will be identified with $G\times A$. We first use the representation of $G^{\mathbb{Z}/p}$ as a subgroup of $\mathbb{F}_p[E]\rtimes G$. For an element $\alpha\in \mathbb{F}_p[E]$ we denote by $\alpha(e)$ the coefficient of $e$ in $\alpha$, that is, $\alpha=\sum_{e\in E}\alpha(e)e$.
\begin{Prop}\label{center of Gaschuetz} An element $(\alpha,g)\in G^{\mathbb{Z}/p}$ belongs to the center  if and only if
\begin{enumerate}
\item $g=1$
\item for all $a\in A$ and all $h,k\in G: \alpha(h,a)=\alpha(k,a)$.
\end{enumerate}
\end{Prop}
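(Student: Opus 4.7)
I would work throughout in the concrete model $G^{A,\mathbb{Z}/p}\le\mathbb{F}_p[E]\rtimes G$ from Subsection~\ref{universal S extension}. The key auxiliary fact is the identification of the kernel $K$ of $G^{A,\mathbb{Z}/p}\twoheadrightarrow G$ with the cycle space $Z:=\ker\partial$ of the Cayley graph $\Gamma(G)$, where $\partial:\mathbb{F}_p[E]\to\mathbb{F}_p[G]$ sends $(h,a)\mapsto ha-h$: an element $(\gamma,1)$ of $G^{A,\mathbb{Z}/p}$ is precisely the signed edge-sum of some closed walk at $1$, and by~\eqref{Schreier for universal extension} one has $\dim_{\mathbb{F}_p}Z=|G|(|A|-1)+1$. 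Sufficiency is then a direct verification: if $g=1$ and $\alpha=\sum_{a\in A}c_a\sigma_a$ with $\sigma_a=\sum_{h\in G}(h,a)$, then $\alpha$ is $G$-invariant under the permutation action on $\mathbb{F}_p[E]$, so $(\alpha,1)$ commutes with every $(\beta,h)$ in the semidirect product; and $\partial\sigma_a=\sum_h(ha-h)=0$ gives $\alpha\in Z=K\subseteq G^{A,\mathbb{Z}/p}$.

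For necessity, let $(\alpha,g)\in Z(G^{A,\mathbb{Z}/p})$. Commuting with the generator $(e_a,a)$ produces $ga=ag$ (so $g\in Z(G)$) together with the relation
$$\alpha-a\cdot\alpha=(1,a)-(g,a)\qquad(a\in A).$$
Commuting with an arbitrary $(\gamma,1)\in K$ collapses to $g\cdot\gamma=\gamma$, so $g$ fixes the cycle space $Z$ pointwise.

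The crux is then to deduce $g=1$, and I would do this by a dimension count. If $g\ne 1$, then $\langle g\rangle$ acts freely on $E=G\times A$ (left multiplication on the first coordinate has no fixed points on $G$), so
$$\dim_{\mathbb{F}_p}\mathbb{F}_p[E]^{\langle g\rangle}=|G||A|/|g|\le\tfrac{1}{2}|G||A|<|G|(|A|-1)+1=\dim Z,$$
the last inequality reducing to $|A|>2-2/|G|$, which holds because $|A|\ge 2$. Thus $Z$ cannot lie inside the fixed subspace, contradicting the previous step; hence $g=1$. Substituting back into the relation gives $a\cdot\alpha=\alpha$ for every $a\in A$, and since $A$ generates $G$ the element $\alpha$ is $G$-invariant, which is precisely condition~(2). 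The only place where I expect to need care is the identification $K\cong Z$: it is not stated explicitly in Subsection~\ref{universal S extension} but is built into the concrete model ($R/R^p[R,R]\cong H_1(\Gamma(G);\mathbb{F}_p)$), and everything else reduces to elementary manipulations in the semidirect product plus the above dimension inequality.
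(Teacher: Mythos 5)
Your proof is correct, but the key step --- forcing $g=1$ --- is argued quite differently from the paper. The paper stays entirely inside the coefficient formalism: from $\alpha-{}^a\alpha=(1,a)-(g,a)$ it extracts three relations among the $\alpha(h,b)$, telescopes along $a$-cycles to conclude $g\in\langle a\rangle$ for every $a\in A$, and then plays two distinct letters $a\ne b$ against each other to reach a contradiction. You instead identify $\ker(G^{A,\mathbb{Z}/p}\twoheadrightarrow G)$ with the cycle space $\ker\partial$ of $\Gamma(G)$ (correct: the kernel consists of the signed edge-vectors of closed walks at $1$, these span $\ker\partial$ via fundamental cycles, and the dimensions agree by (\ref{Schreier for universal extension})), observe that centrality forces $g$ to fix this $\bigl(|G|(|A|-1)+1\bigr)$-dimensional subspace pointwise, and note that a fixed-point-free permutation of the basis $E$ has fixed subspace of dimension at most $|G||A|/\ord(g)\le|G||A|/2$, which is too small since $|A|\ge 2$. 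Your derivation of condition (2) is also slightly cleaner: once $g=1$, the generator relation gives ${}^a\alpha=\alpha$ for all $a\in A$, hence $G$-invariance, whereas the paper commutes with an arbitrary $(\beta,kh^{-1})$. The trade-off: the paper's argument is self-contained and needs no input beyond the semidirect-product multiplication, while yours is shorter and more conceptual but leans on the identification of the kernel with $H_1(\Gamma(G);\mathbb{F}_p)$ and its dimension --- facts that are available in the paper but only implicitly (via the Schreier formula); if you wrote this up you should spell out that $K\subseteq\ker\partial$ and that equality follows from the dimension count. Both arguments use $|A|\ge2$ in an essential way.
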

\begin{proof}
Condition (2) says that $\alpha$ is constant on all edges having the same label. Assume that $(\alpha,g)$ satisfies (1) and (2) and let $(\beta,h)\in G^{\mathbb{Z}/p}$. Assumption (2) implies that ${}^h\alpha=\alpha$ whence
$$(\alpha,1)(\beta,h)=(\alpha+\beta,h)=(\beta+{}^h\alpha,h)=(\beta,h)(\alpha,1).$$
Let conversely $(\alpha,g)\in Z(G^{\mathbb{Z}/p})$. We first show by contradiction that $g=1$, so assume $g\not=1$. For each $a\in A$, we have
$$(\alpha+(g,a),ga)=(\alpha,g)\cdot((1,a),a)=((1,a),a)\cdot(\alpha,g)=((1,a)+{}^a\alpha,ag),$$
whence
\begin{equation}\label{center}
\alpha-{}^a\alpha=(1,a)-(g,a)\end{equation}
follows. Write
$$\alpha=\sum_{h\in G,b\in A}{\alpha(h,b)(h,b)},$$
so that
$${}^a\alpha=\sum_{h\in G,b\in A}{\alpha(h,b)(ah,b)}=\sum_{h\in G,b\in A}{\alpha(a^{-1}h,b)(h,b)}.$$
From  this, (\ref{center}) implies the following three facts:

\begin{enumerate}
\item $\alpha(a^{-1}h,b)=\alpha(h,b)$ for all $h\in G$ and $b\in A\setminus\{a\}$.
\item $\alpha(a^{-1}h,a)=\alpha(h,a)$ for all $h\in G\setminus\{1,g\}$.
\item $\alpha(1,a)-\alpha(a^{-1},a)=1$ and $\alpha(g,a)-\alpha(a^{-1}g,a)=-1$.
\end{enumerate}

We now argue that this implies $g\in\langle a\rangle$. Indeed, otherwise, by the second and third fact above, we get that
$$\alpha(1,a)=\alpha(a^{-1},a)+1=\alpha(a^{-2},a)+1=\cdots=\alpha(a^{-\ord(a)},a)+1=\alpha(1,a)+1,$$
a contradiction. So we can write $g=a^{-k}$ with $k\in\{1,\ldots,\ord(a)-1\}$, and the second and third facts yield
$$\alpha(1,a)=\alpha(a^{-1},a)+1=\cdots=\alpha(a^{-k},a)+1=\alpha(g,a)+1.$$
But likewise, for any $b\in A\setminus\{a\}$, we have $g\in\langle b\rangle$, so we can write $g=b^{-l}$ with $l\in\{1,\ldots,\ord(b)-1\}$, and iterated application of the first fact above, with swapped roles of $a$ and $b$, yields
$$\alpha(1,a)=\alpha(b^{-1},a)=\cdots=\alpha(b^{-l},a)=\alpha(g,a),$$
a contradiction. This concludes the proof that $g=1$. Now suppose that for some $h,k\in G$, $a\in A$, $\alpha(h,a)\not=\alpha(k,a)$; choose any $\beta\in\mathbb{F}_p[E]$ such that $(\beta,kh^{-1})\in G^{\mathbb{Z}/p}$. Then
$$(\alpha,1)(\beta,kh^{-1})=(\alpha+\beta,kh^{-1})$$
and
$$(\beta,kh^{-1})(\alpha,1)=(\beta+{}^{kh^{-1}}\alpha,kh^{-1}).$$
Now,
$$(\alpha+\beta)(h,a)=\alpha(h,a)+\beta(h,a)$$
while
$$(\beta+{}^{hk^{-1}}\alpha)(h,a)=\beta(h,a)+{}^{kh^{-1}}\alpha(h,a)=\beta(h,a)+\alpha(k,a).$$
Hence
$$\alpha+\beta\not=\beta+{}^{kh^{-1}}\alpha$$
whence
$$(\alpha,1)(\beta,kh^{-1})\not=(\beta,kh^{-1})(\alpha,1).$$
\end{proof}
\begin{Cor} For every choice of a function $f:A\to \mathbb{F}_p$ there exists $(\alpha,1)\in Z(G^{\mathbb{Z}/p})$ such that for all $a\in A$ and $g\in G$, $\alpha(g,a)=f(a)$. In particular, the center $Z(G^{\mathbb{Z}/p})$ is a rank-$\vert A\vert$ subgroup  of $\ker(G^{\mathbb{Z}/p}\twoheadrightarrow G)$ .
\end{Cor}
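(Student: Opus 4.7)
The plan is to build, for each $a\in A$, a central element $(\alpha_a, 1)\in Z(G^{\mathbb{Z}/p})$ whose edge part is $\alpha_a := \sum_{g\in G}(g,a)$, the indicator of all positive $a$-edges. By the preceding Proposition \ref{center of Gaschuetz}, such an element is automatically central; moreover, since the kernel $N := \ker(G^{\mathbb{Z}/p}\twoheadrightarrow G)$ is abelian and its group operation on elements of the form $(\alpha,1)$ is just addition of the $\mathbb{F}_p[E]$-coordinates, every target $(\alpha_f,1)$ with $\alpha_f(g,a) = f(a)$ is then obtained as $\sum_{a\in A}f(a)(\alpha_a,1)$.

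To realize $(\alpha_a,1)$ as $[w_a]_{G^{\mathbb{Z}/p}}$ for some $w_a\in F$, fix left coset representatives $g_1=1, g_2,\dots, g_m$ for $\langle a\rangle$ in $G$ and pick words $v_i\in F$ with $[v_i]_G = g_i$; set
$$w_a := \prod_{i=1}^m v_i\, a^{\ord(a)}\, v_i^{-1}.$$
Writing $[w]_{G^{\mathbb{Z}/p}} = (\pi(w), [w]_G)$ for the semidirect-product representation, the multiplication rule yields the cocycle-like identity $\pi(uv) = \pi(u) + [u]_G\cdot \pi(v)$, where $\cdot$ is the left $G$-action on $\mathbb{F}_p[E]$. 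Two consequences drive the computation: (i) $\pi(vwv^{-1}) = [v]_G\cdot \pi(w)$ whenever $[w]_G = 1$, because $\pi(vv^{-1}) = 0$ forces $\pi(v) + [v]_G\cdot \pi(v^{-1}) = 0$; and (ii) $\pi(a^{\ord(a)}) = (1,a) + (a,a)+\cdots + (a^{\ord(a)-1},a)$, the sum of $a$-edges along the $\langle a\rangle$-orbit of $1$. Since $[w_a]_G = 1$, successive application of these rules gives $\pi(w_a) = \sum_i g_i\cdot\pi(a^{\ord(a)}) = \sum_i\sum_j(g_i a^j, a) = \sum_{g\in G}(g,a) = \alpha_a$, the partition into orbits being exactly $G = \bigsqcup_i g_i\langle a\rangle$.

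The only subtlety is careful bookkeeping in $\mathbb{F}_p[E]\rtimes G$ when negative letters appear, in particular the identity $((1,a),a)^{-1} = (-(a^{-1},a), a^{-1})$, which guarantees that backwards-traversed edges contribute with a minus sign and produces the cancellation in (i); once this is handled, the first assertion is complete. The ``in particular'' part follows immediately: by Proposition \ref{center of Gaschuetz}(1), $Z(G^{\mathbb{Z}/p})\subseteq\ker(G^{\mathbb{Z}/p}\twoheadrightarrow G)$; the assignment $f\mapsto (\alpha_f,1)$ constructed above is an $\mathbb{F}_p$-linear injection $\mathbb{F}_p^A\hookrightarrow Z(G^{\mathbb{Z}/p})$, and by Proposition \ref{center of Gaschuetz}(2) every central element is determined by its values on edges $(1,a)$, $a\in A$, so this injection is surjective. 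Hence $Z(G^{\mathbb{Z}/p})$ is elementary abelian of rank $|A|$.
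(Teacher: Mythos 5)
Your proposal is correct and follows essentially the same route as the paper: both realize the desired central element by words of the form $\prod_i v_i a^{e}v_i^{-1}$ that visit a representative of each $a$-cycle (equivalently, each left coset of $\langle a\rangle$), traverse the cycle, and return, and both read off the rank statement from Proposition \ref{center of Gaschuetz}. The only cosmetic difference is that the paper raises $a$ to the power $\ord(a)f(a)$ directly, whereas you traverse each cycle once and then scale by $f(a)$ inside the abelian kernel; these are trivially equivalent.
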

\begin{proof} Fix $a\in A$ and consider the decomposition of $G$ into its $a$-cycles (this is actually the subgraph of $\Gamma(G)$ spanned by all edges labeled $a$). On each cycle choose a vertex, thereby getting vertices $v_1,\dots, v_k$ (where $k$ is the number of $a$-cycles). Now, for every $i$ choose a word $w_i$ labeling a path in $\Gamma(G)$ from $1$ to $v_i$; suppose that the order of $a$ in $G$ is $e$. Consider the word
$$w_a:=w_1a^{ef(a)}w_1^{-1}\cdots w_ka^{ef(a)}w_k^{-1}$$
(where $f(a)$ is interpreted in $\{0,\dots,p-1\}$). The path labeled by $w_a$ starting at $1$ consecutively runs to every $a$-cycle, traverses this cycle $f(a)$ times and then runs back to $1$ via the same path. Evaluation of $w_a$ in $G^{\mathbb{Z}/p}$ gives
$$[w_a]_{G^{\mathbb{Z}/p}}=(\alpha,1)$$
where
$$\alpha(g,b)=\begin{cases} f(a)\mbox{ if }a=b\\ 0 \mbox{ else.}\end{cases}$$
Now produce, for every $a\in A$ a word $w_a$ of that kind and set $w:=\prod_{a\in A}w_a$ (the order of the factors is not relevant). We obtain
$$[w]_{G^{\mathbb{Z}/p}}=(\alpha,1)$$ where $\alpha(g,a)=f(a)$ for all $g\in G, a\in A$, as required.
\end{proof}
\begin{Notation} We set $G^{\widetilde{\mathbb{Z}/p}}:=G^{{\mathbb{Z}/p}}/Z(G^{{\mathbb{Z}/p}})$.
\end{Notation}
Next we formulate a criterion when a word evaluates as $1$ in either of the groups $G^{{\mathbb{Z}/p}}$ and  $G^{\widetilde{\mathbb{Z}/p}}$. The result is immediate from the representation of $G^{{\mathbb{Z}/p}}$ as subgroup of $\mathbb{F}_p[G\times A]\rtimes G$ and from Proposition \ref{center of Gaschuetz}. For a path $\pi$ in a graph $\Gamma$ and for an edge $e$ of $\Gamma$ we denote by $\pi(e)$ the number of signed traversals of $e$ by $\pi$; for a word $w\in F$, a group $G$ and $g\in G$ we denote by $\pi_g^G(w)$ the unique path in $\Gamma(G)$ starting at $g$ and being labelled by $w$.

\begin{Cor} \label{word problem} For $w\in F$,
\begin{enumerate}
\item $[w]_{G^{\mathbb{Z}/p}}=1 \Longleftrightarrow [w]_G=1 \text{ and for all } (g,a)\in G\times A$: $$\pi_1^G(w)(g,a)\equiv 0\ (\bmod\  p)$$
\item $[w]_{G^{\widetilde{\mathbb{Z}/p}}}=1\Longleftrightarrow [w]_G=1\text{ and for all } g,h\in G, a\in A$: $$\pi_1^G(w)(g,a)\equiv \pi_1^G(w)(h,a)\ (\bmod\ p).$$
\end{enumerate}
\end{Cor}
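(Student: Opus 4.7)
The plan is to exploit the concrete model $G^{\mathbb{Z}/p}\cong\langle(e_a,a)\mid a\in A\rangle\le \mathbb{F}_p[E]\rtimes G$ with $e_a=(1,a)$ recorded at the end of Subsection \ref{universal S extension}, and then to read off both equivalences directly from Proposition \ref{center of Gaschuetz}.

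The key bookkeeping step I would carry out first is the explicit formula
$$[w]_{G^{\mathbb{Z}/p}}\;=\;\Bigl(\sum_{e\in E}\pi_1^G(w)(e)\,e,\;[w]_G\Bigr)$$
for every $w\in F$, proved by induction on the length of a spelling of $w$ over $\til A$. The base case is trivial. For the inductive step I would write the spelling as $ua^\epsilon$ with $a\in A$ and $\epsilon\in\{\pm 1\}$, set $(\alpha_u,[u]_G):=[u]_{G^{\mathbb{Z}/p}}$, and compute in the semidirect product. When $\epsilon=+1$ one obtains
$$[w]_{G^{\mathbb{Z}/p}}=(\alpha_u,[u]_G)\cdot((1,a),a)=(\alpha_u+([u]_G,a),\,[w]_G),$$
and the added edge $([u]_G,a)$ is precisely the positive edge traversed at the final step of $\pi_1^G(w)$. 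When $\epsilon=-1$ one has $(e_a,a)^{-1}=(-(a^{-1},a),a^{-1})$, so
$$[w]_{G^{\mathbb{Z}/p}}=(\alpha_u-([w]_G,a),\,[w]_G),$$
and the geometric edge $([w]_G,a)^{\pm 1}$ is precisely the one traversed backwards by $\pi_1^G(w)$ at the final step. In either case the signed count is updated correctly, so the formula is preserved. It also handles non-reduced spellings gracefully, since a cancellation $aa^{-1}$ contributes $+1$ and then $-1$ to the coefficient of the same positive edge.

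Assertion (1) then follows immediately: $[w]_{G^{\mathbb{Z}/p}}=(0,1)$ holds if and only if $[w]_G=1$ and $\pi_1^G(w)(g,a)\equiv 0\pmod p$ for every positive edge $(g,a)$. For assertion (2) I would observe that $[w]_{G^{\widetilde{\mathbb{Z}/p}}}=1$ is equivalent to $[w]_{G^{\mathbb{Z}/p}}\in Z(G^{\mathbb{Z}/p})$, and Proposition \ref{center of Gaschuetz} translates the latter precisely into the pair of conditions $[w]_G=1$ and \emph{the first coordinate is constant on each label-fibre}, which by the explicit formula above reads $\pi_1^G(w)(g,a)\equiv\pi_1^G(w)(h,a)\pmod p$ for all $g,h\in G$ and $a\in A$. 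The only delicate point in the whole argument is the sign bookkeeping in the inductive step, which is why I would split it into the two cases $\epsilon=\pm 1$ explicitly; apart from that there is no real obstacle.
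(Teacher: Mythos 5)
Your proof is correct and follows exactly the route the paper intends: the paper declares the corollary ``immediate from the representation of $G^{\mathbb{Z}/p}$ as subgroup of $\mathbb{F}_p[G\times A]\rtimes G$ and from Proposition~\ref{center of Gaschuetz}'', and your inductive computation of $[w]_{G^{\mathbb{Z}/p}}=\bigl(\sum_{e\in E}\pi_1^G(w)(e)\,e,\,[w]_G\bigr)$ simply writes out the details (including the correct sign bookkeeping for $\epsilon=-1$) that the authors leave implicit. No gaps.
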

\subsubsection{Dissolving constellations}
\begin{Lemma} \label{key lemma} Let $G$ be a group, $\{1\}\ne K\le G$ be a non-trivial subgroup and $p$ be a prime; let $L\le G^{\widetilde{\mathbb{Z}/p}}$ be the inverse image of $K$ under the canonical map $G^{\widetilde{\mathbb{Z}/p}}\twoheadrightarrow G$. Let $\Gamma$ be the Cayley graph of $G^{\widetilde{\mathbb{Z}/p}}$, $g\in G^{\widetilde{\mathbb{Z}/p}}$ and $e=(g,a)$. Then the graph $\Gamma \setminus Le^{\pm 1}$ is disconnected; more precisely: $g$ and $ga$ are in distinct connected components.
\end{Lemma}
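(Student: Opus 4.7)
The plan is to exploit the characterization of $[w]_{G^{\widetilde{\mathbb{Z}/p}}}=1$ given by Corollary \ref{word problem}(2): the signed edge-traversal counts $\pi_1^G(w)(g,a)$ must be \emph{independent of $g\in G$ modulo $p$}. Since the forbidden edge set $Le^{\pm 1}$ projects to $K(1,a)^{\pm 1}$ in $\Gamma(G)$, a putative bypass path from $g$ to $ga$ avoiding $Le^{\pm 1}$ would force its projection to have traversal count $0$ at every $a$-labeled edge starting in $K$; closing this projected path up with a single $a^{-1}$-step then produces the count $-1$ at one such edge, contradicting the mod-$p$ invariance, provided $K$ has at least two elements.

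First I would reduce to the case $g=1$. Left-translation of the Cayley graph by $g^{-1}$ is a graph automorphism sending $(g,a)^{\pm 1}$ to $(1,a)^{\pm 1}$ and sending $Le^{\pm 1}$ to $(g^{-1}Lg)(1,a)^{\pm 1}$. The subgroup $g^{-1}Lg$ is the preimage of the conjugate $\bar g^{-1}K\bar g\le G$, which is still non-trivial, so it suffices to prove: if $K\le G$ is non-trivial with preimage $L$, then $1$ and $a$ lie in distinct components of $\Gamma\setminus L(1,a)^{\pm 1}$.

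Next, assume for contradiction that there exists a path $\pi$ from $1$ to $a$ in $\Gamma\setminus L(1,a)^{\pm 1}$, labelled by some word $w\in F$. Then $[w]_{G^{\widetilde{\mathbb{Z}/p}}}=a$, hence $[wa^{-1}]_{G^{\widetilde{\mathbb{Z}/p}}}=1$. Let $\bar\pi$ denote the image of $\pi$ in $\Gamma(G)$ under the canonical projection. If $\pi$ traverses $(h,a)^{\pm 1}$ for some vertex $h$, its projection traverses $(\bar h,a)^{\pm 1}$; hence avoiding $L(1,a)^{\pm 1}$ in $\Gamma$ forces $\bar\pi$ to avoid every edge of $K(1,a)^{\pm 1}$ in $\Gamma(G)$. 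In particular, $\bar\pi(k,a)=0$ for every $k\in K$. Now $\pi_1^G(wa^{-1})$ equals $\bar\pi$ followed by the negatively-oriented edge $a\overset{a^{-1}}{\to}1$, so for every $g\in G$
\[
\pi_1^G(wa^{-1})(g,a)=\bar\pi(g,a)-[g=1].
\]

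Finally, apply Corollary \ref{word problem}(2) to $wa^{-1}$: the values $\pi_1^G(wa^{-1})(g,a)$ must all be congruent modulo $p$ as $g$ ranges over $G$. Since $K\ne\{1\}$, pick any $k\in K\setminus\{1\}$; then the formula above gives $\pi_1^G(wa^{-1})(1,a)=-1$ and $\pi_1^G(wa^{-1})(k,a)=0$, forcing $-1\equiv 0\pmod p$, a contradiction. The main conceptual step is the translation between ``$\pi$ avoids $L(1,a)^{\pm 1}$ in $\Gamma$'' and ``$\bar\pi$ avoids $K(1,a)^{\pm 1}$ in $\Gamma(G)$''; there is no serious computational obstacle, as Corollary \ref{word problem} does all the heavy lifting, and the role of the hypothesis $K\ne\{1\}$ is exactly to supply a second $a$-edge in $\Gamma(G)$ at which the counts of $\pi_1^G(wa^{-1})$ can be compared.
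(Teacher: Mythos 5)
Your proof is correct and follows essentially the same route as the paper's: reduce to $e=(1,a)$ by left translation (noting $g^{-1}Lg$ is the preimage of the conjugate of $K$), suppose a bypass path exists, project it to $\Gamma(G)\setminus K(1,a)^{\pm 1}$, and observe that the closed path labelled $wa^{-1}$ has traversal count $-1$ at $(1,a)$ but $0$ at $(k,a)$ for $1\ne k\in K$, contradicting Corollary \ref{word problem}(2). No gaps.
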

\begin{proof} A word $w\in F$ labels a path $g\to ga$ in $\Gamma\setminus Le^{\pm 1}$ if and only if $w$ labels a path $1\to a$ in $g^{-1}(\Gamma\setminus Le^{\pm 1})=\Gamma\setminus g^{-1}Lg(1,a)^{\pm 1}$. Since $g^{-1}Lg=\vp^{-1}(\bar g^{-1}K\bar g)$ for $\bar{g}=\vp(g)$ it suffices to treat the case $e=(1,a)$.

Suppose, by contradiction, that there is a word $w\in F$ which labels a path $1\to a$ in $\Gamma\setminus Le^{\pm 1}$; then $wa^{-1}$ labels a closed path in $\Gamma$, hence $[wa^{-1}]_{G^{\widetilde{\mathbb{Z}/p}}}=1$. On the other hand, the path $w:1\to a$ in $\Gamma\setminus Le^{\pm 1}$ projects to a path $w:1\to a$ in the graph $\Gamma(G)\setminus Ke^{\pm 1}$. The closed path $1\to 1$ in $\Gamma(G)$ labelled $wa^{-1}$ does not traverse any edge of the form $(k,a)$ for $1\ne k\in K$ (in either direction) and it traverses $(1,a)$ exactly once (in the reverse direction). Altogether, in the Cayley graph $\Gamma(G)$ we have:
$$\forall\ k\in K,k\ne 1: \pi_1^G(wa^{-1})(k,a)=0\not\equiv -1 = \pi_1^G(wa^{-1})(1,a)\ (\bmod\ p).$$
From Corollary \ref{word problem} (2) it follows that $[wa^{-1}]_{G^{\widetilde{\mathbb{Z}/p}}}\ne 1$, a contradiction.
\end{proof}
The following statement is obvious.
\begin{Lemma} \label{counting lifts} Let $\vp:H \twoheadrightarrow G$, $e\in \Gamma(G)$ and $w\in F$; then
$$\pi_1^G(w)(e)=\sum_{f\in \vp^{- 1}(e)}\pi_1^H(w)(f).$$
\end{Lemma}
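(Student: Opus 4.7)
The statement is essentially a bookkeeping identity that follows from the fact that a morphism of $A$-generated groups induces a label-preserving morphism of Cayley graphs. The plan is to spell out this bookkeeping cleanly.

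First I would observe that the surjective morphism $\vp:H\twoheadrightarrow G$ of $A$-generated groups extends to a morphism of $A$-labeled graphs $\Gamma(H)\twoheadrightarrow\Gamma(G)$, also denoted $\vp$, sending a vertex $h$ to $\vp(h)$ and a positive edge $(h,a)$ to $(\vp(h),a)$; this respects $\iota,\tau$ and the involution, and is label-preserving. Consequently, for any path $\pi$ in $\Gamma(H)$ with $\iota\pi=1$ and label $w$, the image $\vp(\pi)$ is a path in $\Gamma(G)$ with $\iota\vp(\pi)=1$ and label $w$. By uniqueness of paths with prescribed initial vertex and label in a Cayley graph, $\vp(\pi_1^H(w))=\pi_1^G(w)$.

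Next I would make the traversal counts explicit. Writing $w=b_1b_2\cdots b_n$ with $b_i\in\til A$, set $h_0=1$ and $h_i=h_{i-1}b_i$ in $H$, so that $\pi_1^H(w)$ is the sequence of edges $f_i:=(h_{i-1},b_i)$ (with the convention that a negatively oriented edge $(h,a^{-1})$ is $(h\cdot a^{-1},a)^{-1}$). Similarly, $\pi_1^G(w)$ is the sequence of edges $e_i:=(\vp(h_{i-1}),b_i)$. By construction, $\vp(f_i)=e_i$ for every $i$. For a geometric edge $e^{\pm 1}$ of $\Gamma(G)$, the signed traversal count $\pi_1^G(w)(e)$ is, by definition,
\[
\pi_1^G(w)(e)=\sum_{i=1}^{n}\varepsilon_i(e),\qquad\text{where }\varepsilon_i(e)=\begin{cases}+1&\text{if }e_i=e,\\ -1&\text{if }e_i=e^{-1},\\ 0&\text{otherwise,}\end{cases}
\]
and analogously for $\pi_1^H(w)(f)$ with $f_i$ in place of $e_i$.

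Finally I would partition the indices according to the preimage of $e$. Because $\vp(f_i)=e_i$ and $\vp$ commutes with the involution, $e_i=e^{\pm 1}$ if and only if $f_i\in\vp^{-1}(e)$ with $f_i=f^{\pm 1}$ for some $f\in\vp^{-1}(e)$, with matching sign. Hence
\[
\pi_1^G(w)(e)=\sum_{i=1}^{n}\varepsilon_i(e)=\sum_{f\in\vp^{-1}(e)}\sum_{i=1}^{n}\varepsilon_i^H(f)=\sum_{f\in\vp^{-1}(e)}\pi_1^H(w)(f),
\]
where $\varepsilon_i^H(f)$ is the analogous sign for $f_i$ relative to $f$. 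There is no genuine obstacle here; the only care needed is the sign convention for negatively oriented edges, which is handled uniformly by the fact that $\vp$ respects the involution $(\,)^{-1}$.
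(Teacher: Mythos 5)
Your proof is correct: the paper offers no argument at all (it simply declares the lemma obvious), and your bookkeeping — pushing the path forward along the induced label-preserving morphism of Cayley graphs and partitioning the signed traversal contributions over the fibre $\vp^{-1}(e)$ — is exactly the intended justification. Nothing is missing; the sign convention is handled correctly via compatibility of $\vp$ with the involution.
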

Let $(\Xi,g,\Theta)$ be a constellation in the group $G$; let $\Upsilon$ be the connected component containing 1 of the graph $\Xi\cap \Theta$. Set:
$$\partial^+_\Xi:=\{e\in \Xi\cap E(\Gamma(G))\mid \iota(e)\in \Upsilon, \tau(e)\notin \Upsilon\}$$
and
$$\partial^-_\Xi:=\{e\in \Xi\cap E(\Gamma(G))\mid \iota(e)\notin \Upsilon, \tau(e)\in \Upsilon\}$$
and define $\partial^+_\Theta$ and $\partial^-_\Theta$ analogously. We note that
$$(\partial^+_\Xi\cup\partial^-_\Xi)\cap \Theta=\emptyset = (\partial^+_\Theta\cup\partial^-_\Theta)\cap \Xi.$$ For every word $w\in F$ for which $[w]_G=g$ and $\pi_1^G(w)\subseteq \Xi$, the path $\pi_1^G(w)$ traverses the ``border'' $\partial^+_\Xi\cup\partial^-_\Xi$ exactly one time more often in the forward direction than in the backward direction. Therefore (as in \cite[proof of Theorem 2.1]{mytype2} or \cite[p 160, (4.2)]{geometry1}):
$$\sum_{e\in \partial^+_\Xi}\pi_1^G(w)(e)-\sum_{f\in \partial^-_\Xi}\pi_1^G(w)(f)=1.$$
Immediately from Lemma \ref{counting lifts} we get
\begin{Cor}\label{detecting edges} Let $\vp:H\twoheadrightarrow G$ and $(\Xi,g,\Theta)$ be a constellation in $G$; then for every word $w\in F$ for which $[w]_G=g$ and $\pi_1^G(w)\subseteq \Xi$ we have
$$\sum_{e\in \vp^{-1}(\partial^+_\Xi)}\pi_1^H(w)(e)-\sum_{f\in \vp^{-1}(\partial^-_\Xi)}\pi_1^H(w)(f)=1.$$
\end{Cor}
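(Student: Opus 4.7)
The plan is to deduce the corollary as a direct consequence of the identity established in the paragraph immediately preceding its statement, combined with a term-by-term application of Lemma \ref{counting lifts}.

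First I would record the starting point, namely the identity
$$\sum_{e\in \partial^+_\Xi}\pi_1^G(w)(e)-\sum_{f\in \partial^-_\Xi}\pi_1^G(w)(f)=1,$$
which is valid under the hypotheses $[w]_G=g$ and $\pi_1^G(w)\subseteq \Xi$ because the path $\pi_1^G(w)$ starts in $\Upsilon$, ends outside $\Upsilon$ (since $g$ and $1$ lie in distinct components of $\Xi\cap \Theta$), and is entirely contained in $\Xi$, so it must cross the boundary $\partial^+_\Xi\cup\partial^-_\Xi$ one more time in the forward direction than in the backward direction.

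Next, I would substitute into each summand using Lemma \ref{counting lifts}, which expresses $\pi_1^G(w)(e)=\sum_{\tilde e\in \varphi^{-1}(e)}\pi_1^H(w)(\tilde e)$ for every edge $e$ of $\Gamma(G)$. Doing so yields
$$\sum_{e\in\partial^+_\Xi}\ \sum_{\tilde e\in\varphi^{-1}(e)}\pi_1^H(w)(\tilde e)\ -\ \sum_{f\in\partial^-_\Xi}\ \sum_{\tilde f\in\varphi^{-1}(f)}\pi_1^H(w)(\tilde f)=1.$$
Since $\varphi$ is a morphism of $A$-generated groups, the fibres $\varphi^{-1}(e)$ for $e\in \partial^+_\Xi$ are pairwise disjoint and their union is $\varphi^{-1}(\partial^+_\Xi)$; the analogous statement holds for $\partial^-_\Xi$. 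Collapsing the double sums into single sums over $\varphi^{-1}(\partial^+_\Xi)$ and $\varphi^{-1}(\partial^-_\Xi)$ respectively then gives the asserted identity.

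There is essentially no obstacle here: the argument is a bookkeeping exercise, and all sums in sight are finite (only edges actually traversed by $\pi_1^H(w)$ contribute, and each fibre is finite since $G$ and $H$ are finite). The only tacit point to mention is that, in the definition of $\partial^\pm_\Xi$, the sign (positive versus inverse) of the edges is preserved under $\varphi$, so the preimages in $\Gamma(H)$ of a forward border edge are themselves forward border edges of the pulled-back configuration, which is what makes the interpretation of the left-hand side unambiguous.
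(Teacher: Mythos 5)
Your proposal is correct and follows exactly the route the paper intends: the boundary-crossing identity $\sum_{e\in \partial^+_\Xi}\pi_1^G(w)(e)-\sum_{f\in \partial^-_\Xi}\pi_1^G(w)(f)=1$ established just before the corollary, followed by a term-by-term substitution via Lemma \ref{counting lifts} and regrouping of the resulting double sums over the (disjoint) fibres. The paper gives no further detail, labelling the corollary as immediate, so your bookkeeping fills in precisely what was left implicit.
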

The main result now is:
\begin{Thm} Let $p,q,r$ be (not necessarily distinct) primes, $G$ be a group and $H=(G^{\widetilde{\mathbb{Z}/p}})^{\widetilde{\mathbb{Z}/q}}$. Then $H^{\widetilde{\mathbb{Z}/r}}$ is a dissolver of $G$.
\end{Thm}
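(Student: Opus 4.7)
My plan is to argue by contradiction. Given a constellation $(\Xi, g, \Theta)$ in $\Gamma(G)$ and admissible words $u, v \in F$ (i.e.\ $[u]_G = g = [v]_G$ with $\pi_1^G(u) \subseteq \Xi$ and $\pi_1^G(v) \subseteq \Theta$), I would assume $[u]_{H^{\widetilde{\mathbb{Z}/r}}} = [v]_{H^{\widetilde{\mathbb{Z}/r}}}$ and set $w := uv^{-1}$, so that $[w]_{H^{\widetilde{\mathbb{Z}/r}}} = 1$. Applying Corollary \ref{word problem}(2) three times down the tower $H^{\widetilde{\mathbb{Z}/r}} \twoheadrightarrow H \twoheadrightarrow G_1 \twoheadrightarrow G$, where $G_1 := G^{\widetilde{\mathbb{Z}/p}}$, produces the chain $[w]_G = [w]_{G_1} = [w]_H = 1$ together with three label-constancy conditions: for each $a \in A$ there exist $c_p(a) \in \mathbb{Z}/p$, $c_q(a) \in \mathbb{Z}/q$, $c_r(a) \in \mathbb{Z}/r$ such that $\pi_1^G(w)(h,a) \equiv c_p(a) \pmod{p}$, $\pi_1^{G_1}(w)(x,a) \equiv c_q(a) \pmod{q}$ and $\pi_1^H(w)(y,a) \equiv c_r(a) \pmod{r}$ for every vertex $h$, $x$, $y$ at the respective levels.

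Next I would invoke Corollary \ref{detecting edges} at each of the three levels for both $u$ and, by symmetry, $v$ (using the mirror constellation $(\Theta, g, \Xi)$). Since $\partial_\Xi^\pm \cap \Theta = \emptyset$, the $v$-path and all its lifts avoid $\varphi^{-1}(\partial_\Xi^\pm)$ for every projection $\varphi \colon H' \twoheadrightarrow G$ with $H' \in \{G, G_1, H\}$; combining this with $[u]_{H'} = [v]_{H'}$ yields the integer identity
\[
S_\Xi(H',w) \,:=\, \sum_{e \in \varphi^{-1}(\partial_\Xi^+)} \pi_1^{H'}(w)(e) \;-\; \sum_{f \in \varphi^{-1}(\partial_\Xi^-)} \pi_1^{H'}(w)(f) \;=\; 1
\]
at every level. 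Reducing this identity at level $H$ via the mod-$r$ label-constancy, and using that every $G$-edge has exactly $[H:G]$ lifts in $\Gamma(H)$, gives
\[
1 \,\equiv\, [H:G]\sum_{a \in A}(n_a^+ - n_a^-)\,c_r(a) \pmod{r},
\]
with $n_a^\pm := |\partial_\Xi^\pm \cap (G \times \{a\})|$; analogous congruences hold modulo $q$ at level $G_1$ and modulo $p$ at level $G$, with prefactors $[G_1:G]$ and $1$. The Schreier formula for the iterated Gasch\"utz extension forces $[G_1:G]$ to be a $p$-power and $[H:G_1]$ a $q$-power, so whenever $r \in \{p,q\}$ or $p = q$ the leading prefactor vanishes modulo the corresponding prime and we obtain $1 \equiv 0$, the desired contradiction.

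The main obstacle is the remaining case in which $p, q, r$ are pairwise distinct, for then the three boundary congruences are individually consistent and must be supplemented with finer information. I plan to close this case by exploiting the \emph{pointwise} form of the label-constancy at level $H$ (which forces equal traversal counts on all $[H:G]$ sibling lifts of every $G$-edge, not merely balanced totals) together with Lemma \ref{key lemma} applied with $G' = H$ and a non-trivial $K \leq H$ chosen inside $\ker(H \twoheadrightarrow G)$. The resulting coset-cut $Le^{\pm 1}$ in $\Gamma(H^{\widetilde{\mathbb{Z}/r}})$, taken above a boundary edge $e_0 \in \partial_\Xi^+$ actually traversed by $u$, separates the specific lift of $\tau e_0$ visited by the $u$-path from its siblings; since $w$ closes in $\Gamma(H^{\widetilde{\mathbb{Z}/r}})$ its net signed crossing of this cut must vanish, while the $u$-half contributes $+1$ and the $v$-half contributes $0$, forcing a compensating imbalance that violates the mod-$r$ label-constancy. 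Verifying that Lemma \ref{key lemma} can indeed be applied with an $L$-orbit of edges compatible with the prescribed fibre structure will be the most delicate step of the proof.
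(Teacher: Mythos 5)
Your congruence argument does establish the theorem in the degenerate cases: when two of $p,q,r$ coincide, the identity of Corollary \ref{detecting edges} at the appropriate level, reduced via the label-constancy of Corollary \ref{word problem}(2), gives $1\equiv 0$ modulo that prime because the relevant index ($[G_1:G]$ a nontrivial $p$-power, $[H:G_1]$ a nontrivial $q$-power) kills the right-hand side. But this leaves precisely the case the paper actually needs for its applications --- $p,q,r$ pairwise distinct (Theorem \ref{not locally extensible} and the construction of $\mathfrak{F}$ use a sequence of \emph{distinct} primes) --- and your plan for that case has a genuine gap. Lemma \ref{key lemma} applied ``with $G'=H$'' produces a cut $Le^{\pm 1}$ in $\Gamma(H^{\widetilde{\mathbb{Z}/r}})$ separating the two \emph{endpoints of the lifted edge} $e$; it does not separate ``the specific lift of $\tau e_0$ visited by the $u$-path from its siblings'', and the claimed crossing counts ($+1$ from the $u$-half, $0$ from the $v$-half) are not justified: the path of $u$ may meet several edges of the $L$-orbit, and in any case the exact integer crossing number of a cut is not directly comparable with the mod-$r$ label-constancy, so no contradiction is actually derived. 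You acknowledge this yourself in the final sentence.

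The missing idea, which makes the argument uniform in $p,q,r$ and renders your case distinction unnecessary, is the following shifting device from the paper's proof. Pick a boundary edge $f$ of $\Theta$ (so $f\notin\Xi$) and an auxiliary word $w_0$ labelling a closed path at $1$ in $\Gamma(G)$ traversing $f$ exactly once; lift $f$ to an edge $\tilde f$ of $\Gamma(H)$ on the path of $w_0$. Apply Lemma \ref{key lemma} one level down --- to $G_1\twoheadrightarrow G$ with $K=\ker(G_1\twoheadrightarrow G)\neq\{1\}$ and $L=\ker(H\twoheadrightarrow G)$ its preimage in $H=G_1^{\widetilde{\mathbb{Z}/q}}$ --- to conclude that $\Gamma(H)\setminus L\tilde f^{\pm 1}$ is disconnected with the endpoints of $\tilde f$ in different components. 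Since $\pi_1^H(w_0)$ crosses $L\tilde f^{\pm1}$ exactly once, the element $n:=[w_0]_H\in L$ lies on the opposite side from $1$, and left multiplication by $n$ preserves $\Gamma(H)\setminus L\tilde f^{\pm1}$; as $\pi_1^H(u)$ avoids $L\tilde f^{\pm1}$ entirely, it follows that $\pi_1^H(u)$ and $n\pi_1^H(u)$ are disjoint. Now Corollary \ref{detecting edges} (at level $H$ only, modulo $r$) yields an edge $e$ over $\partial_\Xi^{\pm}$ with $\pi_1^H(uv^{-1})(e)=\pi_1^H(u)(e)\not\equiv 0\ (\bmod\ r)$, while $\pi_1^H(uv^{-1})(ne)=0$; since $e$ and $ne$ carry the same label, Corollary \ref{word problem}(2) gives $[uv^{-1}]_{H^{\widetilde{\mathbb{Z}/r}}}\neq 1$. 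In this scheme the $p$- and $q$-extensions play purely structural roles (creating a nontrivial kernel and enabling the coset-cut disconnection) and only the $r$-extension does the arithmetic detection, which is why no relation among the three primes is needed. Without this (or an equivalent) construction your proof is incomplete in the main case.
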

\begin{proof}
Let $(\Xi,g,\Theta)$ be a constellation of $G$ and  let $u,v\in F$ be such that $\pi_1^G(u)\subseteq \Xi$, $\pi_1^G(v)\subseteq \Theta$ and $[u]_G=g=[v]_G$. Our goal is to show that $[u]_{H^{\widetilde{\mathbb{Z}/r}}}\ne [v]_{H^{\widetilde{\mathbb{Z}/r}}}$, that is, $[uv^{-1}]_{H^{\widetilde{\mathbb{Z}/r}}}\ne 1$. If $[u]_H\ne[v]_H$ then we are done; so let us assume that $[u]_H=[v]_H$.

Let us denote the canonical morphism $H\twoheadrightarrow G$ by $\vp$ and set $L:=\ker\vp$; we note that $L=\psi^{-1}\big(\ker(G^{\widetilde{\mathbb{Z}/p}}\twoheadrightarrow G)\big)$ where $\psi$ is the canonical morphism $H=(G^{\widetilde{\mathbb{Z}/p}})^{\widetilde{\mathbb{Z}/q}}\twoheadrightarrow G^{\widetilde{\mathbb{Z}/p}}$. Choose an edge $f\in (\partial_\Theta^+\cup \partial_\Theta^-)^{\pm 1}$  and a word $w\in F$ which labels a closed path at $1$ in $\Gamma(G)$ which traverses the (geometric) edge $f$ exactly once. Let the corresponding path be $sft$, that is, $\pi_1^G(w)=sft$, and let $w_s$ be the prefix of $w$ corresponding to $s$, that is, $\pi_1^G(w_s)=s$. Suppose that the label of $f$ is $b$ (for $b\in A\cup A^{-1}$), that is, $f=([w_s]_G,b)$. Let $\tilde f:=([w_s]_H,b)$, which is an edge of $\Gamma(H)$. By Lemma \ref{key lemma}, the graph $\Gamma(H)\setminus L\tilde{f}^{\pm 1}$ is disconnected, with the endpoints of $\tilde{f}$ lying in different connected components. From $\vp(\pi_1^H(u))=\pi_1^G(u)$, $\vp(L\tilde f)=f$ and $f\notin \pi_1^G(u)$ we conclude that $\pi_1^H(u)\cap L\tilde{ f}^{\pm 1}=\emptyset$, that is, $\pi_1^H(u)\subseteq \Gamma(H)\setminus L\tilde{f}^{\pm 1}$ and $\pi_1^H(u)$ is contained in the connected component of $1$ of the latter graph. Now consider the path $\pi_1^H(w)$, which is, except for the  edge $\tilde f$ (which is traversed exactly once), also contained in $\Gamma(H)\setminus L\tilde{f}^{\pm 1}$. Hence, the two endpoints $1$ and $n:=[w]_H$ of that latter path are in distinct connected components of the graph $\Gamma(H)\setminus L\tilde{f}^{\pm 1}$. Since $[w]_G=1$ we have $n\in \ker\vp=L$ whence $nL=L$. It follows that multiplication of $\Gamma(H)$ by $n$ on the left leaves invariant the graph $\Gamma(H)\setminus L\tilde{f}^{\pm 1} = n(\Gamma(H)\setminus L\tilde{f}^{\pm 1})$ and therefore shifts the graph $\pi_1^H(u)$ to the isomorphic copy $n\pi_1^H(u)=\pi_n^H(u)$ which is contained in the connected component of $n$ in $\Gamma(H)\setminus L\tilde{f}^{\pm 1}$. In particular, $\pi_1^H(u)\cap \pi_n^H(u)=\emptyset$ (meaning that the graphs spanned by these paths are disjoint).

By Corollary \ref{detecting edges} there exists an edge $e\in \vp^{-1}(\partial^+_\Xi\cup \partial^-_\Xi)$ for which $\pi_1^H(u)\not\equiv 0\ (\bmod\ r)$. Since $e\in \vp^{-1}(\partial^+_\Xi\cup \partial^-_\Xi)$ we have that $e\notin \pi_1^H(v)$ (because $(\partial^+_\Xi\cup \partial^-_\Xi)\cap \pi_1^G(v)=\emptyset$) and therefore $e\notin \pi_{[u]_H}(v^{-1})$ (since, as $[u]_H=[v]_H$, the paths $\pi_1^H(v)$ and $\pi_{[u]_H}(v^{-1})$ span the same graphs)  so that
$$\pi_1^H(uv^{-1})(e)=\pi_1^H(u)(e)\not\equiv 0\ (\bmod\ r).$$

The shifted edge $ne$ belongs to $\pi_n^H(u)$, which is disjoint with $\pi_1^H(u)$, whence $ne\notin \pi_1^H(u)$. Since $\vp(ne)=\vp(e)\in \partial^+_\Xi\cup \partial^-_\Xi$ we have $ne\notin \pi_1^H(v)$ and therefore $ne\notin \pi_{[u]_H}(v^{-1})$ (by the same argument as earlier for $e$). Combination of $ne\notin \pi_1^H(u)$ and $ne\notin  \pi_{[u]_H}(v^{-1})$ entails that  $\pi_1^H(uv^{-1})(ne)=0$. Altogether,
$$\pi_1^H(uv^{-1})(e)\not\equiv \pi_1^H(uv^{-1})(ne)\ (\bmod\ r).$$
But $e$ and $ne$ have the same label. So from Corollary \ref{word problem} it follows that $[uv^{-1}]_{H^{\widetilde{\mathbb{Z}/r}}}\ne1$.
\end{proof}

As a consequence we get:
\begin{Thm} \label{tree-like} Let $G$ be a group and $(p_n)$ be a sequence of primes. Set $G_0:=G$ and $G_{n}:=G_{n-1}^{\widetilde{\mathbb{Z}/p_{n}}}$. Then the Cayley graph of the profinite group $\mathcal G:=\varprojlim G_n$ is tree-like.
\end{Thm}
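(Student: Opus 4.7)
The plan is to apply Theorem~\ref{Cayley:tree-like}. First I would identify the Cayley graph of $\mathcal{G}$ with $\Gamma(\wh{F_\mathfrak{N}})$ for a suitable $PQ$-category $\mathfrak{N}$. Let $\mathfrak{N}$ consist of all $A$-generated quotients of all the $G_n$. Since the canonical morphisms $G_{n+1}\twoheadrightarrow G_n$ assemble $\{G_n\}$ into an inverse sequence, and since for $m\le n$ the product $G_m\mathrel{\underset{A}{\times}}G_n$ is canonically isomorphic to $G_n$ via second projection (every element having the form $(\pi_{n,m}(g),g)$), $\mathfrak{N}$ is closed under finite products as well as quotients, hence is a $PQ$-category. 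Because $\{G_n\}$ is cofinal in $\mathfrak{N}$, the identifications $\wh{F_\mathfrak{N}}=\varprojlim_n G_n=\mathcal{G}$ and $\Gamma(\wh{F_\mathfrak{N}})=\varprojlim_n\Gamma(G_n)=\Gamma(\mathcal{G})$ hold.

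Second, by Theorem~\ref{Cayley:tree-like} it suffices to show that every $H\in\mathfrak{N}$ admits a (pre)dissolver within $\mathfrak{N}$. Any such $H$ is a quotient of some $G_n$, and the previous theorem (applied with $p=p_{n+1}$, $q=p_{n+2}$, $r=p_{n+3}$) yields that $G_{n+3}$ is a dissolver of $G_n$; clearly $G_{n+3}\in\mathfrak{N}$. What remains is to upgrade ``dissolver of $G_n$'' to ``predissolver of $H$''. For this I would prove the following lifting lemma: if $\pi:G\twoheadrightarrow H$ is a canonical morphism of $A$-generated groups and $K$ is a dissolver of $G$, then $K$ is a predissolver of $H$. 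Given a constellation $(\Xi',g',\Theta')$ of $\Gamma(H)$ and words $u,v\in F$ with $[u]_H=g'=[v]_H$ whose paths from $1$ run inside $\Xi'$ and $\Theta'$ respectively, one has two cases. If $[u]_G\ne[v]_G$ then $[u]_K\ne[v]_K$, because $K\twoheadrightarrow G$. If $[u]_G=[v]_G=:g$, let $\Xi_0$, respectively $\Theta_0$, be the connected component of $1$ in the preimage subgraph $\pi^{-1}(\Xi')\subseteq\Gamma(G)$, respectively $\pi^{-1}(\Theta')$. The $u$-path and the $v$-path from $1$ in $\Gamma(G)$ run in $\Xi_0$ and $\Theta_0$ respectively and both terminate at $g$; moreover $1$ and $g$ must lie in distinct connected components of $\Xi_0\cap\Theta_0$, for any path joining them inside $\Xi_0\cap\Theta_0\subseteq\pi^{-1}(\Xi'\cap\Theta')$ would project via $\pi$ to a path $1\to g'$ inside $\Xi'\cap\Theta'$, contradicting the constellation hypothesis on $(\Xi',g',\Theta')$. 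Hence $(\Xi_0,g,\Theta_0)$ is a constellation of $\Gamma(G)$, and since $K$ dissolves it we obtain $[u]_K\ne[v]_K$, as required.

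The main technical point is the lifting step for constellations through the quotient $\pi$, in particular the observation that the relevant connected components $\Xi_0,\Theta_0$ of $\pi^{-1}(\Xi'),\pi^{-1}(\Theta')$ still contain both base-points $1$ and $g$, and that the constellation property in $H$ forces $1,g$ to be separated in $\Xi_0\cap\Theta_0$. Once this is in place, the theorem is an immediate combination of the preceding dissolver construction, the lifting lemma, and Theorem~\ref{Cayley:tree-like}, with no further computation.
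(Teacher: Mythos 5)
Your proof is correct and follows the same route the paper intends: the theorem is stated there as an immediate consequence of the preceding dissolver theorem via Theorem~\ref{Cayley:tree-like}, with $G_{n+3}$ serving as a dissolver of $G_n$. Your explicit lifting lemma (a dissolver of $G$ is a predissolver of every quotient of $G$, obtained by pulling a constellation of $\Gamma(H)$ back to the connected components of $1$ in the preimage subgraphs) correctly supplies the detail, left implicit in the paper, needed to pass from the cofinal chain $(G_n)$ to arbitrary members of the generated $PQ$-category.
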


\subsubsection{Distinct primes}
We work towards a proof that in case the primes in the sequence are pairwise distinct and none of them is a divisor of $\vert G\vert$ then the $PQ$-category generated by the inverse sequence $(G_n)$ is freely indexed but not locally extensible. Let $G$ be a group and $p$ be a prime which does not divide $\vert G\vert$. Let $N$ be the additive group of the algebra $\mathbb{F}_p[G]$; $G$ acts on $\mathbb{F}_p[G]$ by left multiplication ${}^h\sum\alpha(g)g=\sum\alpha(g)hg$; consider the semidirect product $K:=N\rtimes G$ subject to that action.  There exists a morphism from $N\rtimes G$ to the additive cyclic group of $\mathbb{F}_p$ of order $p$ which maps the element $(\sum_{g\in G}g,1)$ to the generating element $1$, namely:
\begin{equation}\label{average}
(\alpha,k)\mapsto \frac{1}{\vert G\vert}\sum_{g\in G}\alpha(g).\end{equation}
 Similarly but more easily as in Proposition \ref{center of Gaschuetz} one can show that the center $Z$ of $N\rtimes G$ (is contained in $N$) and consists of the cyclic group generated by the element $\sum_{g\in G}g$. Moreover, $G$ naturally acts on $N/Z$ and $(N\rtimes G)/Z\cong (N/Z)\rtimes G$ (slightly abusing notation and identifying elements of the form $(\alpha,1)$ with $\alpha$).  We intend to describe $[K,K]\cap N$. For each $k\in G, k\ne 1$, the element
\[
\begin{aligned}
(1-k,1)&=(1,1)(0,k)(-1,1)(0,k^{-1})\\
    &=(1,1)(0,k)(1,1)^{-1}(0,k)^{-1}
\end{aligned}
\]
is in $[K,K]\cap N$. From  (\ref{average}), $(\sum_{g\in G}g,1)\notin [K,K]$.  Since the set
$$\big\{ 1-k\mid k\in G,k\ne 1\big\}\cup\big\{\sum_{g\in G}g\big\}$$
forms a basis of $N$ (considered as an $\mathbb{F}_p$-vector space) it follows that $$\{1-k+Z\mid k\in G,k\ne 1\}$$ forms a basis of $N/Z$ so that $N/Z\subseteq [K/Z,K/Z]$.  As a consequence, the canonical map $K/Z\twoheadrightarrow (K/Z)^{ab}$ from $K/Z$ to its abelianization factors through $G$. We have thus obtained the following
\begin{Cor} \label{abelian quotients} Every abelian quotient of $(N/Z)\rtimes G$ is a quotient of $G$.
\end{Cor}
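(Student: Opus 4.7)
The statement is essentially a formal consequence of the computations just carried out in the preceding paragraph; my plan is to assemble those observations cleanly.

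First, I would pin down the identification $K/Z \cong (N/Z)\rtimes G$ that was indicated above: since $Z$ is central and contained in $N$, the $G$-action on $N$ descends to $N/Z$, and the extension of $N/Z$ by $G$ inside $K/Z$ splits via the image of $\{(0,g) : g \in G\}$. Under this identification, the normal subgroup $N/Z$ of $(N/Z)\rtimes G$ corresponds exactly to the image of $N$ in $K/Z$.

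Second, I would observe that the key inclusion is $N/Z \subseteq [K/Z, K/Z]$. This follows immediately from the commutator identity already noted,
\[
[(1,1),(0,k)] = (1-k,\,1) \in [K,K]\cap N \quad \text{for every } k\in G\setminus\{1\},
\]
together with the fact that the cosets $\{1-k+Z \mid k\in G\setminus\{1\}\}$ form an $\mathbb{F}_p$-basis of $N/Z$ (which used that $p\nmid |G|$ to separate the invariant line $Z = \mathbb{F}_p\cdot\sum_{g\in G}g$). Passing the commutators to $K/Z$, their first components generate $N/Z$ additively, so the whole of $N/Z$ lies in $[K/Z,K/Z]$.

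Third, I would conclude as follows. Given any abelian quotient $\pi \colon (N/Z)\rtimes G \twoheadrightarrow A$, the kernel of $\pi$ contains $[K/Z,K/Z]$ and hence $N/Z$; therefore $\pi$ factors as
\[
(N/Z)\rtimes G \twoheadrightarrow \bigl((N/Z)\rtimes G\bigr)\big/(N/Z) \;\cong\; G \twoheadrightarrow A,
\]
exhibiting $A$ as a quotient of $G$, as claimed. There is no genuine obstacle left at this point; the substantive work (computation of the center in Proposition \ref{center of Gaschuetz}, the commutator identity, and the basis statement using $p\nmid |G|$) was done in setting up this corollary, and the proof is just the packaging.
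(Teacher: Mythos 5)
Your proposal is correct and follows the paper's own route exactly: the commutator identity $(1-k,1)=[(1,1),(0,k)]$, the basis $\{1-k+Z\mid k\in G, k\ne 1\}$ of $N/Z$ (using $p\nmid\vert G\vert$ to split off the line $Z$), the resulting inclusion $N/Z\subseteq[K/Z,K/Z]$, and the factorization of any abelian quotient through $(N/Z)\rtimes G/(N/Z)\cong G$. The paper treats the corollary as already established by the preceding paragraph, and your write-up is just a cleaner packaging of that same argument.
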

The connection of $\mathbb{F}_p[G]\rtimes G$ with the Gasch\"utz $p$-extension becomes clear from the following result of Gasch\"utz:
\begin{Thm} \cite[Satz 4.]{Gaschuetz}\label{Gaschuetz} For every $A$-generated group $G$ and every prime $p$ not dividing $\vert G\vert$:
$$G^{\mathbb{Z}/p}\cong \big(\mathbb{F}_p\oplus (\bigoplus_{i=1}^{\vert A\vert-1}\mathbb{F}_p[G])\big)\rtimes G$$
where $G$ acts trivially on  $\mathbb{F}_p$ and by left multiplication on each copy of $\mathbb{F}_p[G]$.
\end{Thm}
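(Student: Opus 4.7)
The plan is to reduce the assertion to a purely module-theoretic computation using the concrete embedding $G^{A,\mathbb{Z}/p}\le \mathbb{F}_p[E]\rtimes G$ recalled in Subsection~\ref{universal S extension}, then to invoke Maschke's theorem together with Schur--Zassenhaus. Throughout, write $M:=\ker(G^{A,\mathbb{Z}/p}\twoheadrightarrow G)$, which sits inside $\mathbb{F}_p[E]=\bigoplus_{a\in A}\mathbb{F}_p[G\times\{a\}]$ (the summand at $a$ identified with the regular module $\mathbb{F}_p[G]$ via $(g,a)\leftrightarrow g$). A direct multiplication in the semidirect product shows $(\beta,g)(\alpha,1)(\beta,g)^{-1}=({}^g\alpha,1)$, so the conjugation $G$-action on $M$ coincides with the restriction of the left multiplication action on $\mathbb{F}_p[E]$; in particular $M$ is an $\mathbb{F}_p[G]$-submodule of $\bigoplus_{a\in A}\mathbb{F}_p[G]$.

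First I would identify $M$ as the module of $1$-cycles of the Cayley graph. Since evaluating a word $w\in F$ in $\mathbb{F}_p[E]\rtimes G$ produces the pair $(\text{signed edge-chain of }\pi_1^G(w),[w]_G)$, and $M$ consists of those pairs with second component $1$, we get $M=\ker\partial$ for the boundary map
\[
\partial\colon \bigoplus_{a\in A}\mathbb{F}_p[G]\longrightarrow \mathbb{F}_p[G],\qquad (x_a)_{a\in A}\longmapsto \sum_{a\in A}x_a(a-1),
\]
which is a morphism of left $\mathbb{F}_p[G]$-modules. The image of $\partial$ equals the augmentation ideal $I:=\ker(\mathbb{F}_p[G]\twoheadrightarrow \mathbb{F}_p)$: the inclusion $\subseteq$ is obvious, while for the converse, writing any nontrivial $g\in G$ as $a_1\cdots a_k$ with $a_i\in\til A$ gives the telescoping
\[
g-1=\sum_{i=1}^{k}a_1\cdots a_{i-1}(a_i-1)\in\operatorname{im}\partial,
\]
and $\{g-1:g\ne 1\}$ spans $I$. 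So we obtain a short exact sequence
\[
0\longrightarrow M\longrightarrow \bigoplus_{a\in A}\mathbb{F}_p[G]\longrightarrow I\longrightarrow 0.
\]

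Next, since $p\nmid |G|$, Maschke's theorem makes $\mathbb{F}_p[G]$ semisimple, so the sequence splits and $M\oplus I\cong \bigoplus_{a\in A}\mathbb{F}_p[G]$. The same hypothesis gives the canonical decomposition $\mathbb{F}_p[G]\cong I\oplus \mathbb{F}_p$ (with $\mathbb{F}_p$ embedded by $1\mapsto |G|^{-1}\sum_{g}g$, which requires $p\nmid |G|$). Substituting yields
\[
M\oplus I\cong \bigoplus_{a\in A}I\;\oplus\;\bigoplus_{a\in A}\mathbb{F}_p,
\]
and Krull--Schmidt in this semisimple setting allows us to cancel one copy of $I$, giving
\[
M\cong \mathbb{F}_p\oplus\bigoplus_{i=1}^{|A|-1}\mathbb{F}_p[G]
\]
as $\mathbb{F}_p[G]$-modules, with $G$ acting trivially on $\mathbb{F}_p$ and by left multiplication on each copy of $\mathbb{F}_p[G]$, as required.

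Finally, to pass from a module isomorphism to the claimed semidirect product description, I observe that $|M|$ is a power of $p$ and $\gcd(|M|,|G|)=1$, so the Schur--Zassenhaus theorem supplies a complement to $M$ in $G^{A,\mathbb{Z}/p}$; combined with the $G$-module identification of $M$, this yields the desired isomorphism. The main obstacle is the identification $M=\ker\partial$ as $G$-modules, since it requires matching the internal conjugation action with the natural external $G$-action on $\mathbb{F}_p[E]$; once this is in hand the rest is a short exercise in semisimple representation theory plus the classical splitting theorem.
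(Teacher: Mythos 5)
Your argument is correct, and it is worth noting that the paper itself offers no proof of this statement: it is quoted verbatim as Satz 4 of Gasch\"utz's 1954 paper, so there is no internal proof to compare against. Your derivation is a clean, self-contained route: identifying $\ker(G^{\mathbb{Z}/p}\twoheadrightarrow G)$ with the mod-$p$ cycle space of $\Gamma(G)$, exhibiting the short exact sequence of left $\mathbb{F}_p[G]$-modules
\[
0\longrightarrow M\longrightarrow \bigoplus_{a\in A}\mathbb{F}_p[G]\overset{\partial}{\longrightarrow} I\longrightarrow 0,
\]
splitting it by Maschke, cancelling one copy of $I$, regrouping $I^{\oplus(\vert A\vert-1)}\oplus\mathbb{F}_p^{\oplus(\vert A\vert-1)}\cong\mathbb{F}_p[G]^{\oplus(\vert A\vert-1)}$ to leave one trivial summand, and finishing with Schur--Zassenhaus. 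All the individual steps check out: the conjugation computation $(\beta,g)(\alpha,1)(\beta,g)^{-1}=({}^g\alpha,1)$ is right, the telescoping argument (together with $x(b^{-1}-1)=-xb^{-1}(b-1)$ for negative letters) does give $\operatorname{im}\partial=I$, and the resulting multiplicity count is consistent with the paper's observation that $Z(G^{\mathbb{Z}/p})$ has rank $\vert A\vert$.

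The one step you pass over quickly is the equality $M=\ker\partial$ rather than merely $M\subseteq\ker\partial$: you need that \emph{every} element of the mod-$p$ cycle space is the signed edge-chain of a closed path at $1$. This is true but deserves a sentence --- either note that $R=\pi_1(\Gamma(G),1)$ surjects onto $Z_1(\Gamma(G);\mathbb{Z})$ and that $Z_1(\Gamma(G);\mathbb{Z})\otimes\mathbb{F}_p\to Z_1(\Gamma(G);\mathbb{F}_p)$ is onto because $C_1/Z_1$ is free abelian, or simply compare dimensions: $\dim_{\mathbb{F}_p}\ker\partial=\vert G\vert(\vert A\vert-1)+1$, which by the paper's equation (2.1) equals $d(R/R^p[R,R])=\log_p\vert M\vert$. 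With that remark added, the proof is complete.
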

In this model of $G^{\mathbb{Z}/p}$, the center is easily identified as:
$$\mathbb{F}_p\oplus (\bigoplus_{i=1}^{\vert A\vert-1} Z )$$
where, as above, $Z$ is the cyclic subgroup of $\mathbb{F}_p[G]$ generated by $\sum_{g\in G}g$. An immediate consequence is a similar model of the group $G^{\widetilde{\mathbb{Z}/p}}$:
\begin{Cor} For every group $G$ and every prime $p$ not dividing $\vert G\vert$:
$$G^{\widetilde{\mathbb{Z}/p}}\cong (\bigoplus_{i=1}^{\vert A\vert -1}\mathbb{F}_p[G]/Z)\rtimes G.$$
\end{Cor}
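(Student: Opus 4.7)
The plan is to derive the claimed description of $G^{\widetilde{\mathbb{Z}/p}}$ directly from Gasch\"utz's theorem (Theorem \ref{Gaschuetz}) by identifying the center of the displayed model of $G^{\mathbb{Z}/p}$ and then quotienting the semidirect product by that (central, hence normal, and $G$-invariant) subgroup.

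First I would record the observation, already stated informally just before the corollary, that under the isomorphism
\[
G^{\mathbb{Z}/p}\cong \bigl(\mathbb{F}_p\oplus\bigoplus_{i=1}^{|A|-1}\mathbb{F}_p[G]\bigr)\rtimes G
\]
the center $Z(G^{\mathbb{Z}/p})$ is exactly $\mathbb{F}_p\oplus\bigoplus_{i=1}^{|A|-1}Z$, where $Z=\mathbb{F}_p\cdot\sum_{g\in G}g$. This goes through essentially verbatim as in Proposition \ref{center of Gaschuetz}: a central element must have trivial $G$-component (otherwise conjugation by an element $(0,1,\ldots,0,1)$ yields a contradiction in any fixed coordinate), and in the $M$-component it must be $G$-invariant. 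Since $G$ acts trivially on the $\mathbb{F}_p$-summand and by left multiplication on each $\mathbb{F}_p[G]$-summand, the invariants on the $i$-th copy are precisely $Z$; here the hypothesis $p\nmid|G|$ is not actually needed for this identification, though it will be used to invoke Theorem~\ref{Gaschuetz} in the first place.

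Next I would observe that for any semidirect product $M\rtimes G$ with a $G$-invariant subgroup $N\subseteq M$ that is central in $M\rtimes G$, one has the natural isomorphism $(M\rtimes G)/N\cong (M/N)\rtimes G$, where the action of $G$ on $M/N$ is the one induced from its action on $M$. Applying this with $M=\mathbb{F}_p\oplus\bigoplus_{i=1}^{|A|-1}\mathbb{F}_p[G]$ and $N=Z(G^{\mathbb{Z}/p})$ yields
\[
G^{\widetilde{\mathbb{Z}/p}}=G^{\mathbb{Z}/p}/Z(G^{\mathbb{Z}/p})\cong\bigl(M/N\bigr)\rtimes G.
\]
Because $N$ is the direct sum (over the summands of $M$) of the $G$-fixed submodules, we have $M/N\cong(\mathbb{F}_p/\mathbb{F}_p)\oplus\bigoplus_{i=1}^{|A|-1}\bigl(\mathbb{F}_p[G]/Z\bigr)=\bigoplus_{i=1}^{|A|-1}\mathbb{F}_p[G]/Z$ as $\mathbb{F}_p[G]$-modules, which immediately gives the claimed formula.

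I do not foresee any real obstacle: the only delicate point is the identification of the center, and this is a routine coordinate-by-coordinate computation analogous to Proposition~\ref{center of Gaschuetz}; once that is in place, the quotient of the semidirect product is a formal manipulation. One minor technicality worth checking is that the $G$-action on each quotient $\mathbb{F}_p[G]/Z$ is still left multiplication, which is automatic since $Z$ is a $G$-submodule and the induced action on the quotient is, by definition, the descent of left multiplication.
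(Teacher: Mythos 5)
Your proposal is correct and follows essentially the same route as the paper, which likewise identifies the center of the Gasch\"utz model $\bigl(\mathbb{F}_p\oplus\bigoplus_{i=1}^{|A|-1}\mathbb{F}_p[G]\bigr)\rtimes G$ as $\mathbb{F}_p\oplus\bigoplus_{i=1}^{|A|-1}Z$ and then passes to the quotient $(M/N)\rtimes G$. The only point worth making explicit is that the faithfulness of the $G$-action on $M$ (which forces the $G$-component of a central element to be trivial) relies on the standing assumption $|A|\ge 2$, so that at least one regular summand $\mathbb{F}_p[G]$ is present.
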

In particular, $G^{\widetilde{\mathbb{Z}/p}}$ is a subdirect power of $(\mathbb{F}_p[G]/Z)\rtimes G$.
The arguments which lead to Corollary \ref{abelian quotients} can therefore be applied componentwise to the latter model of $G^{\widetilde{\mathbb{Z}/p}}$ to obtain that the canonical map $G^{\widetilde{\mathbb{Z}/p}}\twoheadrightarrow (G^{\widetilde{\mathbb{Z}/p}})^{ab}$  factors through $G$.
\begin{Cor} \label{abelian quotients 2} Every abelian quotient of $G^{\widetilde{\mathbb{Z}/p}}$ is a quotient of $G$.
\end{Cor}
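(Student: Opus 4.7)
The plan is to reduce the claim to Corollary \ref{abelian quotients} via the explicit structural description of $G^{\widetilde{\mathbb{Z}/p}}$ given just above. Since $p\nmid \vert G\vert$, we have
$$G^{\widetilde{\mathbb{Z}/p}}\cong M\rtimes G,\quad\text{where } M:=\bigoplus_{i=1}^{\vert A\vert -1}\mathbb{F}_p[G]/Z,$$
with $G$ acting diagonally by left multiplication on each summand. It therefore suffices to show that $M$ is contained in the commutator subgroup $[G^{\widetilde{\mathbb{Z}/p}},G^{\widetilde{\mathbb{Z}/p}}]$; once this is established, $(G^{\widetilde{\mathbb{Z}/p}})^{ab}$ is a quotient of $(M\rtimes G)/M\cong G$, and the corollary follows because every abelian quotient of $G^{\widetilde{\mathbb{Z}/p}}$ is a quotient of $(G^{\widetilde{\mathbb{Z}/p}})^{ab}$.

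For the key containment $M\subseteq [G^{\widetilde{\mathbb{Z}/p}},G^{\widetilde{\mathbb{Z}/p}}]$, I would repeat the computation that yielded Corollary \ref{abelian quotients}, but applied in each coordinate separately. Fix an index $i$ and write $\iota_i:\mathbb{F}_p[G]/Z\hookrightarrow M$ for the inclusion into the $i$-th summand. For any $k\in G$ with $k\neq 1$, the commutator identity used for $N\rtimes G$ works verbatim in $M\rtimes G$: taking the element $(\iota_i(1),1)\in M\rtimes G$ and the element $(0,k)\in M\rtimes G$, their commutator equals $(\iota_i(1-k),1)$, which is therefore in $[G^{\widetilde{\mathbb{Z}/p}},G^{\widetilde{\mathbb{Z}/p}}]$. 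As in the preceding discussion, the cosets $\{1-k+Z\mid k\in G,k\neq 1\}$ form an $\mathbb{F}_p$-basis of $\mathbb{F}_p[G]/Z$, so the commutators $(\iota_i(1-k),1)$ already span $\iota_i(\mathbb{F}_p[G]/Z)$. Letting $i$ range over $1,\dots,\vert A\vert -1$ shows that all summands lie in the commutator subgroup, and hence so does their direct sum $M$.

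The only point requiring care is that, in contrast to Corollary \ref{abelian quotients}, we now work with the direct sum of several copies of $\mathbb{F}_p[G]/Z$ rather than with a single copy. The potential obstacle is that distinct summands might interact in a way that prevents us from producing commutators lying in a single summand. This concern disappears because $G$ acts diagonally on $M$: each $\iota_i(\mathbb{F}_p[G]/Z)$ is a $G$-invariant subgroup of $M$, so the commutator of $(\iota_i(\alpha),1)$ with $(0,k)$ stays inside $\iota_i(\mathbb{F}_p[G]/Z)$. Thus the componentwise application of the previous argument is legitimate, and no further compatibility issues arise.
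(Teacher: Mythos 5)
Your proof is correct and follows essentially the same route as the paper: the paper likewise invokes the model $G^{\widetilde{\mathbb{Z}/p}}\cong(\bigoplus_{i=1}^{\vert A\vert-1}\mathbb{F}_p[G]/Z)\rtimes G$ and applies the commutator computation from Corollary \ref{abelian quotients} componentwise to conclude that the abelianization factors through $G$. Your only addition is to make explicit why the componentwise argument is legitimate (each summand is $G$-invariant), which the paper leaves implicit.
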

We are ready for the second main result.
\begin{Thm} \label{not locally extensible} Let $G:=G_0$ be a group and $(p_n)_{n\ge 1}$ be a sequence of distinct primes none of which divides $\vert G\vert$. For $n\ge 1 $ set $G_n:=G_{n-1}^{\widetilde{\mathbb{Z}/{p_n}}}$; then the $PQ$-category $\mathfrak{N}$ generated by the sequence $(G_n)$ is not locally extensible.
\end{Thm}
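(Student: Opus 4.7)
The plan is to argue by contradiction: assume that $\mathfrak{N}$ is locally extensible. Since each $G_n$ projects onto $G_{n-1}$, any $A$-product $G_n\mathrel{\underset{A}{\times}}G_m$ collapses to $G_{\max(n,m)}$, so $\mathfrak{N}$ consists precisely of the $A$-generated quotients of the $G_n$'s. I take $G_0\in\mathfrak{N}$; by hypothesis there is a finite simple group $S$ with $G_0^{A,S}$ a quotient of some $G_n$. The key general ingredient, obtained by iterating Corollary \ref{abelian quotients 2}, is that $G_n^{ab}$ is a quotient of $G_0^{ab}$ for every $n\geq 0$. The argument then splits according to the nature of $S$.

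In the cyclic case $S=C_p$, I will compute $(G_0^{A,C_p})^{ab}$ explicitly. Writing $R:=\ker(F\twoheadrightarrow G_0)$ and $\bar R:=R[F,F]/[F,F]\leq F^{ab}$, since $[R,R]\subseteq[F,F]$ one obtains
\[
(G_0^{A,C_p})^{ab}\,=\,F/R^p[F,F]\,=\,F^{ab}/p\bar R.
\]
Choosing a basis of $F^{ab}\cong\mathbb{Z}^{|A|}$ that places $\bar R$ in Smith normal form with invariant factors $1\leq n_1\mid\cdots\mid n_{|A|}$, we read off $G_0^{ab}\cong\bigoplus_i C_{n_i}$ and $(G_0^{A,C_p})^{ab}\cong\bigoplus_i C_{pn_i}$. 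The exponent jumps by a factor of $p$:
\[
\exp\bigl((G_0^{A,C_p})^{ab}\bigr)\,=\,p\cdot\exp(G_0^{ab}).
\]
On the other hand, $(G_0^{A,C_p})^{ab}$ is a quotient of $G_n^{ab}$, hence of $G_0^{ab}$, so its exponent divides $\exp(G_0^{ab})$. This forces the impossible divisibility $p\cdot\exp(G_0^{ab})\mid\exp(G_0^{ab})$.

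In the non-abelian simple case, I argue via solvability of the kernel. Each $\ker(G_k\twoheadrightarrow G_{k-1})$ is a quotient of $\ker(G_{k-1}^{\mathbb{Z}/p_k}\twoheadrightarrow G_{k-1})$, i.e.~of an elementary abelian $p_k$-group; induction on the filtration $1\to\ker(G_n\to G_{n-1})\to\ker(G_n\to G_0)\to\ker(G_{n-1}\to G_0)\to 1$ shows that $\ker(G_n\twoheadrightarrow G_0)$ is solvable. A surjection $G_n\twoheadrightarrow G_0^{A,S}$ factors through $G_0$, so it restricts to a surjection $\ker(G_n\twoheadrightarrow G_0)\twoheadrightarrow\ker(G_0^{A,S}\twoheadrightarrow G_0)$; but the target is a nontrivial power of the non-abelian simple group $S$ and is therefore not solvable, a contradiction.

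The main obstacle is the abelianization computation in the cyclic case --- not technically deep, but requiring care with the absorption $[R,R]\subseteq[F,F]$ and the Smith-form bookkeeping. Once the exponent-jump identity is in hand, the contradiction in each case follows immediately, and no use of the distinctness of the $p_n$ or of the coprimality $p_n\nmid|G_0|$ appears to be needed for this particular statement (these hypotheses presumably support the other properties claimed for $\mathfrak{N}$).
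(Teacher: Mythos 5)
Your argument is correct and, in the cyclic case, takes a genuinely different (and arguably cleaner) route than the paper. Both proofs rest on the same key ingredient --- iterating Corollary \ref{abelian quotients 2} to conclude that every abelian quotient of $G_n$ is already a quotient of $G_0$ --- but they extract the contradiction differently. The paper first observes that $\vert\ker(G_n\twoheadrightarrow G_0)\vert$ is coprime to $\vert G_0\vert$, deduces $p\nmid\vert G_0\vert$, and then notes that $C_p$ is an abelian quotient of $G_0^{\mathbb{Z}/p}$, hence of $G_n$, hence of $G_0$ --- impossible. Your exponent identity $\exp\bigl((G_0^{A,C_p})^{ab}\bigr)=p\cdot\exp(G_0^{ab})$, obtained from $[R,R]\subseteq[F,F]$ and the Smith normal form of $\bar R\le F^{ab}$, reaches the contradiction uniformly without the preliminary reduction to $p\nmid\vert G_0\vert$; the computation is correct. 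Your treatment of non-abelian $S$ via solvability of $\ker(G_n\twoheadrightarrow G_0)$ is also correct and is in fact more explicit than the paper, whose proof silently restricts attention to $S=C_p$.

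One point must be corrected, however: your closing remark that neither the distinctness of the $p_n$ nor the condition $p_n\nmid\vert G_0\vert$ is needed. Corollary \ref{abelian quotients 2} is proved (via Theorem \ref{Gaschuetz} and the averaging map (\ref{average}), which divides by $\vert G\vert$) only under the hypothesis that $p$ does not divide $\vert G\vert$. To iterate it along $G_k=G_{k-1}^{\widetilde{\mathbb{Z}/p_k}}$ you must verify $p_k\nmid\vert G_{k-1}\vert$ at every step, and since $\vert G_{k-1}\vert$ equals $\vert G_0\vert$ times a product of powers of $p_1,\dots,p_{k-1}$, this verification is exactly the conjunction of the two hypotheses you propose to discard. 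Nor are they dispensable in substance: for $G=C_p\times C_p$ with $\vert A\vert=2$ one has $\bar R=p\mathbb{Z}^2$, so $(G^{\mathbb{Z}/p})^{ab}\cong(C_{p^2})^2$, and the central generators $w_a$ map to $p^2[a]=0$ there, whence $(G^{\widetilde{\mathbb{Z}/p}})^{ab}\cong(C_{p^2})^2$ is not a quotient of $G^{ab}\cong(C_p)^2$. So your ``key general ingredient'' fails without these hypotheses; your proof does use them, implicitly, each time it invokes the corollary.
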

\begin{proof} We show that for no prime $p$ and no $n\in \mathbb{N}$ the morphism $G_n\twoheadrightarrow G_0$ factors through $G_0^{\mathbb{Z}/p}$. Suppose the contrary is true: then for some prime $p$ and some $n$, ${G}_n\twoheadrightarrow G_0^{\mathbb{Z}/p}\twoheadrightarrow G_0$. Since no prime divisor of $\vert\ker(G_n\twoheadrightarrow G_0)\vert$ divides $\vert G_0\vert$ it follows that $p$ cannot divide $\vert G_0 \vert$ whence $p\in \{p_1\dots,p_n\}$. But the cyclic group of $\mathbb{F}_p$ of order $p$  is a quotient of $G_0^{\mathbb{Z}/p}$ while from Corollary \ref{abelian quotients 2} and by induction it follows that the only abelian quotients of $G_n$ are those of $G_0$. However, the latter does not have the cyclic group of order $p$ among its quotients since $p\nmid \vert G_0\vert$.
\end{proof}
Finally we show that the group $\mathcal{G}=\varprojlim G_n$ (the sequence $(G_n)$ chosen as above) is freely indexed. Let $t:=\vert A\vert\ge 2$ and  for $n\in \mathbb{N}$ set $\mathcal{U}_n:=\ker(\mathcal{G}\twoheadrightarrow G_n)$. We intend to show that $d(\mathcal{U}_n)=(t-1)\vert G_n\vert +1$ (and only the inequality $\ge$ has to be proved). Since $\ker(G_{n+2}\twoheadrightarrow G_n)$ is a quotient of $\mathcal{U}_n$ it suffices to show that the rank of the latter is not smaller than $(t-1)\vert G_n\vert +1$. So, for $\ell\ge 1$ and a fixed $n$ let $K_\ell:=\ker(G_{n+\ell} \twoheadrightarrow G_n)$. We have
$$G_{n+1}=(\bigoplus_{i=1}^{t-1} \mathbb{F}_{p_{n+1}}[G_n]/Z_n)\rtimes G_n,$$
where $Z_n$ is the cyclic subgroup of $\mathbb{F}_{p_{n+1}}[G_n]$ generated by $\sum_{g\in G_n}g$. Hence
%denotes the center of $G_n^{\mathbb{Z}/p_{n+1}}$. Hence
$$K_1=\bigoplus_{i=1}^{t-1} \mathbb{F}_{p_{n+1}}[G_n]/Z_n.$$
%We set $r:=\vert G_n\vert$; then
Moreover,
\[
\begin{aligned}
G_{n+2} &= (\bigoplus_{i=1}^{t-1}\mathbb{F}_{p_{n+2}}[G_{n+1}]/Z_{n+1})\rtimes G_{n+1} \\
        &= (\bigoplus_{i=1}^{t-1}\mathbb{F}_{p_{n+2}}[G_{n+1}]/Z_{n+1}) \rtimes \big((\bigoplus_{i=1}^{t-1}\mathbb{F}_{p_{n+1}}[G_{n}]/Z_{n})\rtimes G_n\big),
\end{aligned}
\]
where $Z_{n+1}$ is the cyclic subgroup of $\mathbb{F}_{p_{n+2}}[G_{n+1}]$ generated by $\sum_{g\in G_{n+1}}g$,
%is the center of $G_{n+1}^{\mathbb{Z}/p_{n+2}}$
 and thus
$$K_2=(\bigoplus_{i=1}^{t-1}\mathbb{F}_{p_{n+2}}[G_{n+1}]/Z_{n+1}) \rtimes \big((\bigoplus_{i=1}^{t-1}\mathbb{F}_{p_{n+1}}[G_{n}]/Z_{n})\rtimes \{1\}\big).
$$
Set $r:=\vert G_n\vert$ and let $q:=p_{n+1}^{(r-1)(t-1)}=\vert K_1\vert$; then $\vert G_{n+1}\vert =q\cdot r$. For
$$L:=\bigoplus_{i=1}^{t-1}\mathbb{F}_{p_{n+2}}[G_{n+1}]/Z_{n+1}$$
we  have $$d(L)=(\vert G_{n+1}\vert -1)(t-1)=(q\cdot r-1)(t-1).$$
Now  $L\unlhd K_2$ and $[K_2:L]=\vert K_1\vert =q$; since the quantity in the Schreier formula is an upper bound for the rank of the subgroup we have
$$d(L)\le (d(K_2)-1)\vert K_1\vert +1=(d(K_2)-1)q+1.$$
Hence
\[
\begin{aligned}
d(K_2) &\ge \frac{d(L)-1}{q}+1\\
        &= \frac{(qr-1)(t-1)-1}{q}+1\\
        &=(r-\frac{1}{q})(t-1)-\frac{1}{q} +1\\
        &=rt-\frac{t}{q}-r+\frac{1}{q}-\frac{1}{q}+1 =r(t-1)+1-\frac{t}{q}.\\
\end{aligned}
\]
 Since $q=p_{n+1}^{(r-1)(t-1)}\ge 2^{3(t-1)}>t$ for all $t\ge 2$ we have that $\frac{t}{q}< 1$. Since $d(K_2)$ is an integer we get $d(K_2)\ge r(t-1)+1$ for $r=\vert G_n\vert$. Since $\{\mathcal{U}_n\mid n\in \mathbb{N}\}$ forms a basis of the neighborhoods of $1$ in $\mathcal G$ the next result is a consequence of \cite[Lemma 2.5 (ii)]{LubotzkyDries}:
\begin{Thm}\label{freely indexed} If all primes $p_n$ are distinct and none of them divides $\vert G_0\vert $ then $\mathcal{G}$ is freely indexed.
\end{Thm}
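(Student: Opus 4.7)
The plan is to verify condition (2) of Proposition~\ref{char:freely indexed} for the cofinal Hausdorff-filtered family $\{\mathcal{U}_n : n \geq 0\}$, where $\mathcal{U}_n := \ker(\mathcal{G}\twoheadrightarrow G_n)$; by the cited \cite[Lemma~2.5(ii)]{LubotzkyDries}, it suffices to check the Schreier formula on any neighborhood basis of $1$ consisting of open normal subgroups, and $\{\mathcal{U}_n\}$ is such a basis. The concrete target is therefore
\[
d(\mathcal{U}_n) \;=\; (t-1)|G_n| + 1, \qquad t := |A|,
\]
for all sufficiently large $n$.

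The upper bound $d(\mathcal{U}_n) \leq (t-1)|G_n|+1$ is the classical Schreier inequality, applied to the open subgroup $\mathcal{U}_n$ of index $|G_n|$ in the topologically $t$-generated group $\mathcal{G}$. For the matching lower bound the natural strategy is to exhibit a finite quotient of $\mathcal{U}_n$ whose rank already meets the target; I would take $K_2 := \ker(G_{n+2}\twoheadrightarrow G_n)$, which factors through $\mathcal{G}\twoheadrightarrow G_{n+2}$, so that $d(\mathcal{U}_n)\geq d(K_2)$.

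To lower-bound $d(K_2)$ I would invoke Theorem~\ref{Gaschuetz} twice. This is legitimate because the hypotheses that the $p_n$ are pairwise distinct and that $p_n \nmid |G_0|$ for all $n$ propagate inductively to yield $p_{n+1}\nmid |G_n|$; indeed, each $|G_{k+1}|$ is $|G_k|$ times a power of $p_{k+1}$, so distinctness and the base-case hypothesis together preclude $p_{n+1}$ from dividing any $|G_k|$ with $k \leq n$. The resulting semidirect-product model $G_{n+1}\cong(\bigoplus_{i=1}^{t-1}\mathbb{F}_{p_{n+1}}[G_n]/Z_n)\rtimes G_n$ and its iterate for $G_{n+2}$ exhibit inside $K_2$ an elementary abelian normal subgroup $L$ of rank $d(L)=(t-1)(|G_{n+1}|-1)$ and of index $q := |K_1| = p_{n+1}^{(t-1)(|G_n|-1)}$. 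Inverting the Schreier inequality for the pair $L \leq K_2$ yields $d(K_2)\geq(d(L)-1)/q+1$, and a short arithmetic manipulation rewrites this as $d(K_2)\geq (t-1)|G_n|+1 - t/q$. For $n$ large enough that $q>t$ (which holds as soon as $|G_n|$ is moderately large, since $q$ grows doubly exponentially in $|G_n|$), the correction term $t/q$ is strictly less than $1$, and integrality of $d(K_2)$ forces $d(K_2)\geq (t-1)|G_n|+1$, matching the upper bound.

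The main obstacle is precisely this last numerical step: the Schreier inequality loses a factor of the index each time it is invoked, and one needs enough slack to round the fractional loss $t/q$ away. Trying to use $K_1$ in place of $K_2$ would fail, because $K_1$ is already abelian of rank $(t-1)(|G_n|-1)$ --- exactly one unit short of the target. It is therefore essential to pass through the \emph{second} Gasch\"utz-type extension $G_{n+2}$, whose construction contributes the large extra factor $q$ that makes the slack vanish. Once this is verified, \cite[Lemma~2.5(ii)]{LubotzkyDries} delivers the freely-indexed property of $\mathcal{G}$ immediately.
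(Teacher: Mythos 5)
Your proposal is correct and follows essentially the same route as the paper: the lower bound is obtained from the quotient $K_2=\ker(G_{n+2}\twoheadrightarrow G_n)$ by applying the reversed Schreier inequality to the same normal subgroup $L$ of index $q=\vert K_1\vert$, and then rounding away the defect $t/q<1$ by integrality before invoking \cite[Lemma 2.5 (ii)]{LubotzkyDries}. Your explicit verification that $p_{n+1}\nmid\vert G_n\vert$ (needed to apply Theorem \ref{Gaschuetz}) is a welcome detail the paper leaves implicit, but it does not change the argument.
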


\subsection{Formations} We are going to construct a formation of solvable groups which is arboreous and freely indexed (therefore Hall) but not locally extensible. Let $(p_n)_{n\ge 0}$ be a sequence of pairwise distinct primes. For every $t \ge 2$ we fix an alphabet $A_t$ of size $t$ and set $G_{0t}:=(\mathbb{Z}/{p_0}\mathbb{Z})^{A_t}$ and by induction, for $n\ge 1$: $G_{nt}:=G_{n-1,t}^{\widetilde{\mathbb{Z}/{p_{n}}}}$ and set
\begin{equation}\label{free pro-F}  \mathcal{G}_t:=\varprojlim_{n\ge 0} G_{nt}.
\end{equation}
According to Theorems \ref{tree-like}, \ref{freely indexed} and \ref{not locally extensible} all $\mathcal{G}_t$ have tree-like Cayley graphs and are freely indexed. We intend to show that the class of all finite quotients of all $\mathcal{G}_t$ forms a formation with respect to which the group $\mathcal{G}_t$ is the $A_t$-generated free profinite object.

First we to show that the class of all quotients of
$$\{G_{nt}\mid n\ge 0, t\ge 2\}$$
is a formation. An essential step to this goal is to show that the $G_{nt}$ are ``relatively free'', that is, they satisfy a certain universal mapping property. We shall use the following well known fact.
\begin{Lemma} \label{factoring center}
Let $G\overset{\vp}{\twoheadrightarrow}H$ be groups with respective centers $Z(G)$ and $Z(H)$. Then $\vp(Z(G))\subseteq Z(H)$, hence $\vp$ induces a canonical morphism $\ol{\vp}:G/Z(G)\twoheadrightarrow H/Z(H)$.
\end{Lemma}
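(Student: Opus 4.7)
The plan is to verify the center inclusion by direct computation using surjectivity of $\vp$, and then invoke the universal property of quotient maps to produce $\ol{\vp}$.

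First I would take any $z \in Z(G)$ and any $h \in H$. Since $\vp$ is surjective, there exists $g \in G$ with $\vp(g) = h$, and then $\vp(z) h = \vp(z) \vp(g) = \vp(zg) = \vp(gz) = \vp(g)\vp(z) = h \vp(z)$, using that $z$ is central in $G$. Thus $\vp(z) \in Z(H)$, establishing $\vp(Z(G)) \subseteq Z(H)$.

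Next I would compose $\vp$ with the quotient morphism $H \twoheadrightarrow H/Z(H)$ to obtain a surjection $\pi: G \twoheadrightarrow H/Z(H)$ whose kernel contains $Z(G)$ (by the first paragraph). By the universal property of the quotient $G \twoheadrightarrow G/Z(G)$, the morphism $\pi$ factors uniquely through $G/Z(G)$, yielding the desired $\ol{\vp}: G/Z(G) \twoheadrightarrow H/Z(H)$. Surjectivity of $\ol{\vp}$ is inherited from that of $\vp$.

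There is no real obstacle here; the statement is standard and the argument is a two-line verification. The only thing to be careful about is invoking surjectivity of $\vp$ at the crucial step (without it, $\vp(z)$ would only commute with elements of $\vp(G)$, not with all of $H$).
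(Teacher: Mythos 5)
Your proof is correct and is the standard argument; the paper itself states this lemma as a well-known fact and gives no proof at all, so there is nothing to diverge from. Your care in noting where surjectivity of $\vp$ is essential is exactly the right point to flag.
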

Throughout we let the groups $G_{ns}$ be equipped with the standard generating set $A_s$, though, at some point we will have to consider also non-standard generators.
\begin{Thm}\label{universal property}
Let $t\ge s$ and $n\ge 0$; then every map $\beta:A_t\to G_{ns}$ such that $\beta(A_t)$ generates $G_{ns}$ can be extended to a morphism $\widehat{\beta}:G_{nt}\twoheadrightarrow G_{ns}$.
\end{Thm}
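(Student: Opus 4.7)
The plan is to proceed by induction on $n$. For the base case $n=0$, the group $G_{0t}=(\mathbb{Z}/p_0\mathbb{Z})^{A_t}$ is the free elementary abelian $p_0$-group on $A_t$, so every $\beta:A_t\to G_{0s}$ extends uniquely to a morphism $G_{0t}\to G_{0s}$, and surjectivity is immediate from the hypothesis that $\beta(A_t)$ generates $G_{0s}$.

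For the inductive step, assume the statement for $n-1$, and let $\beta:A_t\to G_{ns}$ have image generating $G_{ns}$. Denoting the canonical projection by $\pi_s:G_{ns}\twoheadrightarrow G_{n-1,s}$, the composition $\pi_s\beta$ also generates $G_{n-1,s}$, so by induction it extends to a surjection $\gamma:G_{n-1,t}\twoheadrightarrow G_{n-1,s}$. Let $F:=F_{A_t}$, let $R$ be the kernel of $F\twoheadrightarrow G_{n-1,t}$ and let $S$ be the kernel of $\gamma$ precomposed with this map, so $R\subseteq S$. The kernel of $\pi_s$ is elementary abelian of exponent $p_n$, because it is a quotient of $\ker(G_{n-1,s}^{A_s,\mathbb{Z}/p_n}\twoheadrightarrow G_{n-1,s})$, which is itself elementary abelian of exponent $p_n$. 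Hence the unique extension $\tilde\beta:F\to G_{ns}$ of $\beta$ maps $S$ into $\ker\pi_s$, so $S^{p_n}[S,S]\subseteq\ker\tilde\beta$ and $\tilde\beta$ factors through $F/S^{p_n}[S,S]=G_{n-1,s}^{A_t,\mathbb{Z}/p_n}$. Combined with $R^{p_n}[R,R]\subseteq S^{p_n}[S,S]$, this provides a surjection of $A_t$-generated groups
\[
\sigma:G_{n-1,t}^{A_t,\mathbb{Z}/p_n}\twoheadrightarrow G_{n-1,s}^{A_t,\mathbb{Z}/p_n}\twoheadrightarrow G_{ns}
\]
extending $\beta$.

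The main obstacle is the last step: to show $\sigma$ descends to $G_{nt}=G_{n-1,t}^{A_t,\mathbb{Z}/p_n}/Z(G_{n-1,t}^{A_t,\mathbb{Z}/p_n})$. Let $w\in F$ represent the identity in $G_{nt}$. By Corollary \ref{word problem}(2), $[w]_{G_{n-1,t}}=1$ and there exist constants $c_a\in\mathbb{F}_{p_n}$ for $a\in A_t$ with $\pi_1^{G_{n-1,t}}(w)(g,a)\equiv c_a\pmod{p_n}$ for every $g\in G_{n-1,t}$. Applying $\gamma$ then gives $[w]_{G_{n-1,s}}=1$, and Lemma \ref{counting lifts} yields
\[
\pi_1^{G_{n-1,s}}(w)(g',a)=\sum_{g\in\gamma^{-1}(g')}\pi_1^{G_{n-1,t}}(w)(g,a)\equiv|\ker\gamma|\,c_a\pmod{p_n},
\]
which is independent of $g'\in G_{n-1,s}$ because all fibers of the surjective homomorphism $\gamma$ have the same cardinality. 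A second application of Corollary \ref{word problem}(2) now gives $[w]_{G_{ns}}=1$, so $\sigma$ descends to the required surjection $\widehat{\beta}:G_{nt}\twoheadrightarrow G_{ns}$.
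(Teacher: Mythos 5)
Your setup through the construction of the surjection $\sigma:G_{n-1,t}^{A_t,\mathbb{Z}/p_n}\twoheadrightarrow G_{n-1,s}^{A_t,\mathbb{Z}/p_n}\twoheadrightarrow G_{ns}$ is correct, and the Cayley-graph computation via Corollary \ref{word problem} and Lemma \ref{counting lifts} is carried out properly. The gap is in the very last sentence. What your second application of Corollary \ref{word problem}(2) actually yields is that $w$ becomes trivial in $G_{n-1,s}^{A_t,\widetilde{\mathbb{Z}/p_n}}$, the tilde-Gasch\"utz extension of $G_{n-1,s}$ taken \emph{with respect to the $t$-element generating set} $A_t\to G_{n-1,s}$ (equivalently, that $[w]_{G_{n-1,s}^{A_t,\mathbb{Z}/p_n}}$ lies in the center). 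This is not the same group as $G_{ns}=G_{n-1,s}^{A_s,\widetilde{\mathbb{Z}/p_n}}$, which is built from the $s$-element standard generating set; for $t>s$ they even have different orders. So what you have shown is that $[w]$ lies in $Z\bigl(G_{n-1,s}^{A_t,\mathbb{Z}/p_n}\bigr)$, and what you still need is that the surjection $\rho:G_{n-1,s}^{A_t,\mathbb{Z}/p_n}\twoheadrightarrow G_{ns}$ induced by $\tilde\beta$ annihilates this center. That is a genuine missing step, not a notational slip: by Lemma \ref{factoring center} one only gets $\rho(Z)\subseteq Z(G_{ns})$, which need not be trivial a priori.

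The step can be repaired, in two ways. Either invoke Gasch\"utz's lifting of generators to lift $\beta(A_t)$ to a generating set of $G_{n-1,s}^{A_s,\mathbb{Z}/p_n}$, factor $\rho$ through $G_{n-1,s}^{A_s,\mathbb{Z}/p_n}$ via the universal property of the $A_t$-Gasch\"utz extension, and then apply Lemma \ref{factoring center} to see that the center lands inside $Z\bigl(G_{n-1,s}^{A_s,\mathbb{Z}/p_n}\bigr)$, which is killed when passing to $G_{ns}$ --- this is essentially the route the paper takes (its proof works entirely with morphisms $G_{nt}^{\mathbb{Z}/p_{n+1}}\twoheadrightarrow G_{ns}^{\mathbb{Z}/p_{n+1}}$ of the full Gasch\"utz extensions and never needs the Cayley-graph word criterion). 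Or argue directly that $Z(G_{ns})\cap\ker\pi_s=1$: since $\rho(Z)$ lies in both, this forces $\rho(Z)=1$. The latter follows from Theorem \ref{Gaschuetz}, because $p_n\nmid\vert G_{n-1,s}\vert$ makes $\mathbb{F}_{p_n}[G_{n-1,s}]/Z_{n-1}$ a semisimple module with no trivial summand; note however that this second repair uses the pairwise distinctness of the primes, which the paper's argument does not need. With either supplement your proof goes through, and it is a genuinely different (more computational) route than the paper's.
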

\begin{proof} Note that for the special case $t=s$ this means that the groups $G_{ns}$ are \emph{homogeneous} in the sense of Gasch\"utz \cite{Ga:lifting generators}. The proof is, for fixed $t\ge s$ by induction on $n$. For $n=0$ the claim says that every map $\beta:A_t\to (\mathbb{Z}/p_0\mathbb{Z})^s$ such that $\beta(A_t)$ generates $(\mathbb{Z}/p_0\mathbb{Z})^s$ can be extended to a morphism $(\mathbb{Z}/p_0\mathbb{Z})^t\twoheadrightarrow(\mathbb{Z}/p_0\mathbb{Z})^s$, which is obviously correct since $A_t$ is a basis of the vector space $(\mathbb{Z}/p_0\mathbb{Z})^t$ and $t\ge s$.

For the following arguments, the reader may check with Figure 4. We assume that $A_t$ is a subset (standard generating set) of $$G_{nt}, G_{nt}^{\widetilde{\mathbb{Z}/p_{n+1}}}, G_{nt}^{\mathbb{Z}/p_{n+1}}, G_{ns}^{A_t,\mathbb{Z}/p_{n+1}}$$ while $A_s$ is a subset (standard generating set) of $G_{ns}, G_{ns}^{\widetilde{\mathbb{Z}/p_{n+1}}}, G_{ns}^{\mathbb{Z}/p_{n+1}}$; but $A_t$ is embedded (not necessarily injectively) in $G_{ns}^{\widetilde{\mathbb{Z}/p_{n+1}}}, G_{ns}^{\mathbb{Z}/p_{n+1}}$ via $\beta$ and $\beta_1$.
\begin{figure}[ht]
\begin{tikzpicture} [xscale=1.8, yscale=1.6]
\node at (2.8,1.9) {$G_{ns}$};
\node at (2.8,3.9) {$G_{nt}$};
\node at (1,2) {$G_{ns}^{\widetilde{\mathbb{Z}/p_{n+1}}}$};
\node at (1,4) {$G_{nt}^{\widetilde{\mathbb{Z}/p_{n+1}}}$};
\node at (-1,2)   {$G_{ns}^{{\mathbb{Z}/p_{n+1}}}$};
\node at (-1,4) {$G_{nt}^{{\mathbb{Z}/p_{n+1}}}$};
\node at (-3,2) {$G_{ns}^{{A_t,\mathbb{Z}/p_{n+1}}}$};
\node at (0,0) {$A_t$};
\draw[->>] (-0.5,1.9)--(0.5,1.9);
\draw[->>] (1.5,1.9)--(2.5,1.9);
\draw[->>] (-2.5,1.9)--(-1.5,1.9);
\draw[->>] (-0.5,3.9)--(0.5,3.9);
\draw[->>] (1.5,3.9)--(2.5,3.9);
\draw[->] (-0.1,0.2)-- (-0.9,1.8);
\draw[->] (0.1,0.2)--(0.77,1.69);
\draw[->>] (0.95,3.7)--(0.95,2.35);
\draw[->>] (-1,3.7)-- (-1,2.3);
\draw[->>] (2.8,3.7)--(2.8,2.2);
\draw[->>] (-1.35,3.7)--(-3,2.35);
%\draw[->] (-0.2,0.1)-- (-3,1.8);
\node[right] at (2.8,3) {$\gamma$};
\node at (2,2) {$\varphi_s$};
\node at (0,2) {$\tau_s$};
\node at (-2,2.05) {$\widehat{\beta_1}$};
\node at (0,4) {$\tau_t$};
\node[right] at (-1,3) {$\alpha$};
\node at ((0.95,3) {$\widetilde{\alpha}=\widehat{\beta}$};
\node[left] at (-0.5,1) {$\beta_1$};
\node[right] at (0.5,1) {$\beta$};
\node[above left] at (-2,3) {$\widehat{\gamma}$};
\end{tikzpicture}
\caption{}
\end{figure}
So, let $n\ge 0$ and suppose the claim be true for $n$. Let $\beta:A_t\to G_{n+1,s}=G_{ns}^{\widetilde{\mathbb{Z}/p_{n+1}}}$ be a mapping such that $\beta(A_t)$ generates $G_{ns}^{\widetilde{\mathbb{Z}/p_{n+1}}}$. Then $\vp_s(\beta(A_t))$ generates $G_{ns}$ ($\vp_s$  being the canonical morphism $G_{n,s}^{\widetilde{\mathbb{Z}/p_{n+1}}}\twoheadrightarrow G_{ns}$). By the induction assumption there exists a morphism $\gamma: G_{nt}\twoheadrightarrow G_{ns}$ such that $\gamma(a)=(\vp_s(\beta(a))$ for all $a\in A_t$. Since the  Gasch\"utz extension is functorial (Proposition \ref{Gaschuetz functorial}), there exists a (canonical) morphism $\widehat{\gamma}: G_{nt}^{\mathbb{Z}/p_{n+1}}\twoheadrightarrow G_{ns}^{A_t,\mathbb{Z}/p_{n+1}}$ (that is, $\wh{\gamma}(a)=a$ for all $a\in A_t$ and $A_t$ is the standard generating set of $G_{ns}^{A_t,\mathbb{Z}/p_{n+1}}$: $G_{ns}^{A_t,\mathbb{Z}/p_{n+1}}$ is the Gasch\"utz $p_{n+1}$-extension of $G_{ns}$ but with respect to the generating set $\vp_s(\beta(A_t))$). From a result of Gasch\"utz \cite{Ga:lifting generators}, the generating set $\beta(A_t)$ in $G_{ns}^{\widetilde{\mathbb{Z}/p_{n+1}}}$ can be lifted to a generating set of $G_{ns}^{\mathbb{Z}/p_{n+1}}$, hence there exists a mapping $\beta_1:A_t\to G_{ns}^{\mathbb{Z}/p_{n+1}}$ such that $\beta_1(A_t)$ generates $G_{ns}^{\mathbb{Z}/p_{n+1}}$ and $\beta=\tau_s\circ \beta_1$ for the canonical morphism $\tau_s:G_{ns}^{\mathbb{Z}/p_{n+1}}\twoheadrightarrow G_{ns}^{\widetilde{\mathbb{Z}/p_{n+1}}}$ (note that $\tau_s$ is canonical with respect to $A_s$ by definition, but respects also the embedded generating set $A_t$). From the universal property of the Gasch\"utz extension the mapping $\beta_1$ extends to a morphism $\wh{\beta_1}:G_{ns}^{A_t,\mathbb{Z}/p_{n+1}} \twoheadrightarrow G_{ns}^{\mathbb{Z}/p_{n+1}}$, that is, $\wh{\beta_1}(a)=\beta_1(a)$ for all $a\in A_t$. In order to avoid confusion: $G_{ns}^{A_t,\mathbb{Z}/p_{n+1}}$ is the Gasch\"utz $p_{n+1}$-extension of $G_{ns}$ but with respect to the generating set $\vp_s(\beta(A_t))$; the group $G_{ns}^{\mathbb{Z}/p_{n+1}}$ is \textbf{some} $A_t$-generated extension of an elementary abelian $p_{n+1}$-group by $G_{ns}$ which maps to $G_{ns}$ via the $A_t$-generators respecting morphism $\vp_s\circ \tau_s$; hence the $A_t$-canonical mapping $G_{ns}^{A_t,\mathbb{Z}/p_{n+1}}\twoheadrightarrow G_{ns}$ factors through $G_{ns}^{\mathbb{Z}/p_{n+1}}$ via the $A_t$-generators respecting morphism $\wh{\beta}_1$.
The morphism
$$ \wh{\beta_1}\circ\wh{\gamma}:G_{nt}^{\mathbb{Z}/p_{n+1}}\twoheadrightarrow G_{ns}^{\mathbb{Z}/p_{n+1}}$$ then satisfies
$$(\wh{\beta_1}\circ\wh{\gamma})(a)=\wh{\beta_1}(\wh{\gamma}(a))=\wh{\beta_1}(a)=\beta_1(a)$$
for all $a\in A_t$. Set $\alpha:=\wh{\beta_1}\circ\wh{\gamma}$ (then $\alpha(a)=\beta_1(a)$ for all $a\in A_t$). The latter morphism, by Lemma \ref{factoring center}, induces a morphism $\widetilde{\alpha}:G_{nt}^{\widetilde{\mathbb{Z}/p_{n+1}}}\twoheadrightarrow G_{ns}^{\widetilde{\mathbb{Z}/p_{n+1}}}$ such that $$\widetilde{\alpha}\circ\tau_t=\tau_s\circ \alpha$$
where $\tau_t:G_{nt}^{\mathbb{Z}/p_{n+1}}\twoheadrightarrow G_{nt}^{\widetilde{\mathbb{Z}/p_{n+1}}}$ is the canonical morphism. Setting $\wh{\beta}:=\widetilde{\alpha}$ then we have $$\wh{\beta}(a)=\widetilde{\alpha}(a)=(\widetilde{\alpha}\circ\tau_t)(a)=(\tau_s\circ \alpha)(a)=(\tau_s\circ \beta_1)(a)=\beta(a),$$ as required.
\end{proof}

An immediate consequence is this:
\begin{Cor}\label{general universal property} Let $t\ge s$ and $G$ be a morphic image of $G_{ns}$. Then for every map $\beta: A_t\to G$ such that $\beta(A_t)$ generates $G$ there exists a  morphism $\wh{\beta}:G_{nt}\to G$ such that $\wh{\beta}(a)=\beta(a)$ for all $a\in A_t$.
\end{Cor}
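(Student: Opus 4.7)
The plan is to reduce the corollary to Theorem \ref{universal property} by first lifting the generating map $\beta$ through the canonical surjection $\pi : G_{ns} \twoheadrightarrow G$, then invoking Theorem \ref{universal property} upstairs, and finally composing back down with $\pi$.

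First, I would apply Gasch\"utz's lifting lemma \cite{Ga:lifting generators} (already used in the proof of Theorem \ref{universal property}) to the surjection $\pi : G_{ns} \twoheadrightarrow G$. Since $G_{ns}$ is $A_s$-generated, its rank satisfies $d(G_{ns}) \le s \le t = |A_t|$, so the lemma guarantees that the generating $t$-tuple $\beta(A_t)$ of $G$ can be lifted through $\pi$: there exists a map $\beta' : A_t \to G_{ns}$ such that $\beta'(A_t)$ generates $G_{ns}$ and $\pi \circ \beta' = \beta$.

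Now Theorem \ref{universal property} applies directly to the map $\beta' : A_t \to G_{ns}$ (with the inequality $t \ge s$ as in the hypothesis), yielding a morphism $\widehat{\beta'} : G_{nt} \twoheadrightarrow G_{ns}$ that extends $\beta'$, i.e. $\widehat{\beta'}(a) = \beta'(a)$ for all $a \in A_t$. Setting $\widehat{\beta} := \pi \circ \widehat{\beta'} : G_{nt} \to G$, one checks for each $a \in A_t$ that
\[
\widehat{\beta}(a) \;=\; \pi\bigl(\widehat{\beta'}(a)\bigr) \;=\; \pi\bigl(\beta'(a)\bigr) \;=\; \beta(a),
\]
which is the required extension property.

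There is no real obstacle here: the entire content of the argument is packaged in Theorem \ref{universal property}, and the only additional ingredient is Gasch\"utz's lifting of generators, whose hypothesis $d(G_{ns}) \le |A_t|$ is automatic from $t \ge s$. I would therefore present the corollary as a one-paragraph consequence of Theorem \ref{universal property}.
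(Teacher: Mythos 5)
Your proof is correct and follows essentially the same route as the paper: lift the generating tuple $\beta(A_t)$ through the canonical surjection $G_{ns}\twoheadrightarrow G$ via Gasch\"utz's lemma (valid since $d(G_{ns})\le s\le t$), apply Theorem \ref{universal property} to the lifted map, and compose back down. (The paper's own two-line proof contains a typo, writing $G_{nt}$ where the lift should land in $G_{ns}$; your version is the clean reading of the intended argument.)
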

\begin{proof} By  {\emph{lifting the generators}} \cite{Ga:lifting generators} there exists a map $\beta_1:A_t\to G_{nt}$ such that $\beta_1(A_t)$ generates $G_{nt}$ and $\vp\circ \beta_1=\beta$ where $\vp:G_{nt}\twoheadrightarrow G$ is the canonical morphism. By Theorem \ref{universal property} there exists a morphism $\wh{\beta}:G_{nt}\twoheadrightarrow G_{ns}$ such that $\wh{\beta}(a)=\beta_1(a)$ for all $A_t$. The morphism $\vp\circ \wh{\beta}:G_{nt}\twoheadrightarrow G$ then has the required property.
\end{proof}
We arrive at the first main result of the present subsection.
\begin{Thm}\label{quotients form formation} The class
$$\mathfrak{F}:=\bm{\mathsf{Q}}\{G_{nt}\mid n\ge 0,t\ge 2\}$$ %{\boldmath $\mathsf{x^2+2}$}
of all quotients of all groups $G_{nt}$ is a formation.
\end{Thm}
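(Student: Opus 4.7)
The plan is to verify the two defining properties of a formation: closure under homomorphic images and closure under subdirect products (equivalently, if $G/M,G/N\in\mathfrak{F}$ then $G/(M\cap N)\in\mathfrak{F}$). Closure under quotients is automatic from the definition $\mathfrak{F}=\bm{\mathsf{Q}}\{G_{nt}\}$, since a quotient of a quotient is a quotient. So the real content is the subdirect product property, and the tool for that is Corollary \ref{general universal property}, which lets us lift generating maps of quotients of the $G_{ns}$ to morphisms on $G_{nt}$ with $t\ge s$.

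Concretely, let $H$ be a subdirect product of $H_1,H_2\in\mathfrak{F}$, with surjective projections $\pi_i\colon H\twoheadrightarrow H_i$. By definition of $\mathfrak{F}$, pick $n_i,s_i$ with $H_i$ a quotient of $G_{n_i,s_i}$. For any $n\ge\max(n_1,n_2)$ the canonical surjections $G_{n,s_i}\twoheadrightarrow G_{n_i,s_i}\twoheadrightarrow H_i$ make $H_i$ a morphic image of $G_{n,s_i}$. Now fix $t\ge\max(s_1,s_2,d(H),2)$ and choose a map $\beta\colon A_t\to H$ with $\beta(A_t)$ generating $H$. Each composite $\pi_i\circ\beta\colon A_t\to H_i$ generates $H_i$, so Corollary \ref{general universal property}, applied with source $G_{n,s_i}$ and alphabet $A_t$, yields morphisms $\widehat{\beta_i}\colon G_{n,t}\twoheadrightarrow H_i$ with $\widehat{\beta_i}(a)=\pi_i(\beta(a))$ for every $a\in A_t$.

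Form the diagonal $\widehat{\beta}=(\widehat{\beta_1},\widehat{\beta_2})\colon G_{nt}\to H_1\times H_2$. For each $a\in A_t$,
\[
\widehat{\beta}(a)=\bigl(\pi_1(\beta(a)),\pi_2(\beta(a))\bigr),
\]
which under the canonical embedding $H\hookrightarrow H_1\times H_2$, $h\mapsto(\pi_1(h),\pi_2(h))$, is exactly $\beta(a)\in H$. Hence $\widehat{\beta}(A_t)\subseteq H$, so the image of $\widehat{\beta}$ is contained in $H$; on the other hand that image contains $\beta(A_t)$, which generates $H$. Therefore $\widehat{\beta}\colon G_{nt}\twoheadrightarrow H$, so $H\in\mathfrak{F}$, and $\mathfrak{F}$ is a formation.

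The only real subtlety is simultaneous coordination of the parameters $n$ and $t$: we need $n$ large enough to surject onto both $H_1$ and $H_2$, and $t$ large enough both to apply Corollary \ref{general universal property} to each $H_i$ (requiring $t\ge s_i$) and to generate $H$ itself (requiring $t\ge d(H)$). Once this bookkeeping is done the argument is purely diagrammatic and needs no group-theoretic computation beyond what is already packaged in Corollary \ref{general universal property}.
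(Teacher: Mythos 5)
Your proof is correct and follows essentially the same route as the paper: reduce to closure under subdirect products, uniformize the parameters $n$ and $t$, and use Corollary \ref{general universal property} to produce surjections $G_{nt}\twoheadrightarrow H_i$ agreeing with the chosen generators, whose diagonal then surjects onto the subdirect product because its image is generated by $\beta(A_t)\subseteq H$. The only cosmetic difference is that you treat two factors (which suffices by induction) while the paper handles $k$ factors at once.
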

\begin{proof}
The class is obviously closed under quotients. We need to prove that it is closed under subdirect products. Let $G_1,\dots, G_k$ be members of $\mathfrak{F}$ and choose a number $n$ such that every $G_i$ is a quotient of $G_{nt_i}$ for some $t_i$. In particular, $G_i$ is $t_i$-generated. Let $G\subseteq \prod_{i=1}^k G_i$ be a subdirect product. Choose a generating tuple $T$ of $G$ of size $t$, say, and we may assume that $t\ge t_i$ for all $i$. Let $\alpha:A_t\to T$ be a bijection  and for every $i$ denote by $\pi_i:G\twoheadrightarrow G_i$ the projection. Since $\pi_i(T)$ generates $G_i$, by Corollary \ref{general universal property} there is a morphism $\vp_i:G_{nt}\twoheadrightarrow G_i$ extending the mapping $\pi_i\circ \alpha:A_t\to G_i$. It follows that the morphism $\vp_1\times \cdots\times \vp_k:G_{nt}\to \prod_{i=1}^k G_i$, $g\mapsto (\vp_1(g),\dots,\vp_k(g))$ maps onto $G$, as requested.
\end{proof}

In order to show that for every $t\ge 2$ the group $\mathcal{G}_t$ is the $A_t$-generated free pro-$\mathfrak{F}$ group it
suffices to prove that  every $A_t$-generated member of $\mathfrak{F}$ is a quotient of $G_{nt}$ for some $n\ge 0$. For a prime $p$ let $\mathbf{Ab}_p$ be the variety  of all finite elementary abelian $p$-groups. We set $$\mathbf{F}_0:=\mathbf{Ab}_{p_0}\text{ and }\mathbf{F}_n:= \mathbf{Ab}_{p_n}*\mathbf{F}_{n-1}\text{ for }n>0$$ where, by definition, the latter denotes the class of all finite groups which are extensions of elementary abelian $p_n$-groups by groups in $\mathbf{F}_{n-1}$. The class $\bigcup_{n\ge 0} \mathbf{F}_n$ is a variety of finite groups  and by definition, $\mathfrak{F}\subseteq \mathbf{F}$. We show that every $A_t$-generated member of $\mathfrak{F}$ is a quotient of some $G_{nt}$ by showing by induction on $n$ that this claim is true for all members of $\mathfrak{F}_n:=\mathfrak{F}\cap \mathbf{F}_n$. The claim for $n=0$ is obviously true: every $A_t$-generated elementary abelian $p_0$-group is a quotient of $G_{0t}=(\mathbb{Z}/p_0\mathbb{Z})^t$. So, let $n>0$ and suppose the claim be true for $n-1$ and let $G$ be an $A_t$-generated member of $\mathfrak{F}_n$; there exists a normal subgroup $N\unlhd G$ which is in $\mathbf{Ab}_{p_n}$ such that the quotient $G/N$ is in $\mathfrak{F}_{n-1}$. By the induction hypothesis, $G/N$ is a quotient of $G_{n-1,t}$. By the universal property of the Gasch\"utz extension, $G$ is a quotient of $(G/N)^{\mathbb{Z}/p_n}$, that is,  $(G/N)^{\mathbb{Z}/p_n}\twoheadrightarrow G$ and the kernel $K$ of the latter morphism is an elementary abelian $p_n$-group. Now by Theorem \ref{Gaschuetz},
$$(G/N)^{\mathbb{Z}/p_n}\cong \big(\mathbb{F}_{p_n}\oplus(\bigoplus_{i=1}^{t-1} \mathbb{F}_{p_n}[G/N])\big)\rtimes (G/N)$$
and the kernel $K$ is contained in $\mathbb{F}_{p_n}\oplus(\bigoplus_{i=1}^{t-1} \mathbb{F}_{p_n}[G/N])$. Let $Z$ be the cyclic subgroup  of  $\mathbb{F}_{p_n}[G/N])$ generated by $\sum_{x\in G/N}x$. We claim that
$$\mathbb{F}_{p_n}\oplus(\bigoplus_{i=1}^{t-1}Z)\subseteq K.$$
If this were not the case then $\big(\mathbb{F}_{p_n}\oplus(\bigoplus_{i=1}^{t-1} \mathbb{F}_{p_n}[G/N])\big)\rtimes (G/N)$ would have a cyclic image of order $p_n$ (either via the direct summand $\mathbb{F}_{p_n}$ or via (\ref{average}) applied to one of the  summands whose $Z$ is not in $K$). However, from Corollary \ref{abelian quotients 2} it follows that the only abelian members of $\mathfrak{F}$ are elementary abelian $p_0$-groups, leading to a contradiction. This means that $K$ contains the center of $(G/N)^{\mathbb{Z}/p_n}$, so that $G=(G/N)^{\mathbb{Z}/p_n}/K$ is a quotient of $(G/N)^{\widetilde{\mathbb{Z}/p_n}}$. Finally, from Lemma \ref{factoring center} in combination with Proposition \ref{Gaschuetz functorial},  $(G/N)^{\widetilde{\mathbb{Z}/p_n}}$
is a quotient of $G_{nt}=G_{n-1,t}^{\widetilde{\mathbb{Z}/p_n}}$. Altogether we have proved:
\begin{Thm} \begin{enumerate}
\item For every $t\ge 2$, the $A_t$-generated members of $\mathfrak{F}$ are exactly the quotients of $\{G_{nt}\mid n\ge 0\}$.
\item For all $t\ge 2$ the $A_t$-generated free pro-$\mathfrak{F}$ groups are the groups $\mathcal{G}_t$ of  (\ref{free pro-F}).
\end{enumerate}
\end{Thm}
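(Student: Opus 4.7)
The plan is to establish (1) first by induction on the level $n$ in the variety filtration $\mathbf{F}_n = \mathbf{Ab}_{p_n}*\mathbf{F}_{n-1}$, and then derive (2) as a short consequence.

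For (1), one inclusion is immediate: any quotient of $G_{nt}$ is by definition an $A_t$-generated member of $\mathfrak{F}$. For the converse, note that $\mathfrak{F}\subseteq \mathbf{F}=\bigcup_{n\ge 0}\mathbf{F}_n$, so every $G\in\mathfrak{F}$ lies in $\mathfrak{F}_n:=\mathfrak{F}\cap \mathbf{F}_n$ for some $n$. I would prove by induction on $n$ that every $A_t$-generated member of $\mathfrak{F}_n$ is a quotient of $G_{nt}$. The base case $n=0$ is clear, since $G_{0t}=(\mathbb{Z}/p_0\mathbb{Z})^{A_t}$ is the free object of $\mathbf{Ab}_{p_0}$ on $A_t$.

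For the inductive step, take an $A_t$-generated $G\in\mathfrak{F}_n$ and pick $N\unlhd G$ with $N\in\mathbf{Ab}_{p_n}$ and $G/N\in \mathfrak{F}_{n-1}$. By induction $G/N$ is a quotient of $G_{n-1,t}$. The universal property of the Gasch\"utz $p_n$-extension then yields a surjection $(G/N)^{\mathbb{Z}/p_n}\twoheadrightarrow G$ whose kernel $K$ is elementary abelian of exponent $p_n$. The crucial step is to verify that $K$ contains the full centre of $(G/N)^{\mathbb{Z}/p_n}$. Using the concrete Gasch\"utz model of Theorem \ref{Gaschuetz},
$$(G/N)^{\mathbb{Z}/p_n}\cong \bigl(\mathbb{F}_{p_n}\oplus\bigoplus_{i=1}^{t-1}\mathbb{F}_{p_n}[G/N]\bigr)\rtimes (G/N),$$
if $K$ failed to absorb either the $\mathbb{F}_{p_n}$ summand or the cyclic subgroup $Z$ of any $\mathbb{F}_{p_n}[G/N]$ summand, then via the obvious projection or the averaging homomorphism \eqref{average} applied to that summand, $G$ would admit a quotient of order $p_n$. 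Since $p_n\ne p_0$, this contradicts Corollary \ref{abelian quotients 2}. Hence $G$ is a quotient of $(G/N)^{\widetilde{\mathbb{Z}/p_n}}$, which in turn is a quotient of $G_{n-1,t}^{\widetilde{\mathbb{Z}/p_n}}=G_{nt}$ by functoriality of the Gasch\"utz extension (Proposition \ref{Gaschuetz functorial}) together with Lemma \ref{factoring center}.

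For (2), $\mathcal{G}_t=\varprojlim_n G_{nt}$ is an $A_t$-generated profinite group whose projections realise every $G_{nt}\in \mathfrak{F}$ as a continuous quotient, so $\mathcal{G}_t$ is pro-$\mathfrak{F}$. Given any $A_t$-generated $H\in\mathfrak{F}$, part (1) supplies an index $n$ and an $A_t$-canonical surjection $G_{nt}\twoheadrightarrow H$; composing with the continuous projection $\mathcal{G}_t\twoheadrightarrow G_{nt}$ produces the required canonical continuous morphism $\mathcal{G}_t\twoheadrightarrow H$, verifying the universal property. The main obstacle is the centre-containment step inside the induction; it is the only place where the distinctness of the primes $p_n$ and the rigidity of the abelian quotients of $\mathfrak{F}$ supplied by Corollary \ref{abelian quotients 2} enter in an essential way.
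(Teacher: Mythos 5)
Your argument is correct and follows the paper's proof essentially verbatim: the same induction on $n$ through the filtration $\mathbf{F}_n=\mathbf{Ab}_{p_n}*\mathbf{F}_{n-1}$, the same use of the universal property and concrete Gasch\"utz model, the same centre-containment step via the averaging map and Corollary \ref{abelian quotients 2}, and the same descent to $G_{nt}$ via Proposition \ref{Gaschuetz functorial} and Lemma \ref{factoring center}, with (2) deduced from (1) exactly as the paper does.
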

\begin{Cor} The formation $\mathfrak{F}$ is freely indexed and arboreous, therefore Hall, but not locally extensible.
\end{Cor}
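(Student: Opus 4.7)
The plan is that this corollary is essentially a bookkeeping step: all four properties are read off from the preceding theorems together with the equivalences between $PQ$-category properties and geometric properties of the corresponding Cayley graph of $\wh{F_\mathfrak{N}}$ (the theorem following Definition~\ref{crucial properties}). By Definition~\ref{crucial properties}, a formation has one of the four properties iff the $PQ$-category of its $A$-generated members has it, for every $A$ with $|A|\ge 2$. By the theorem immediately preceding the corollary, $\mathfrak{F}_{A_t} = \bm{\mathsf{Q}}\{G_{nt}\mid n\ge 0\}$ is exactly the $PQ$-category used in Section~\ref{solvable}, whose pro-$\mathfrak{F}_{A_t}$ completion is $\mathcal{G}_t$.

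First I would verify the prime hypotheses once: the primes $p_1,p_2,\dots$ used to build the tower above $G_{0t} = (\mathbb{Z}/p_0\mathbb{Z})^{A_t}$ are pairwise distinct and all distinct from $p_0$, so none of them divides $|G_{0t}| = p_0^{|A_t|}$. With this in hand, Theorem~\ref{tree-like} (applied to $G := G_{0t}$ and the sequence $(p_n)_{n\ge 1}$) shows that $\Gamma(\mathcal{G}_t)$ is tree-like for every $t\ge 2$; the equivalence theorem then yields that $\mathfrak{F}_{A_t}$ is arboreous for every $t\ge 2$, hence $\mathfrak{F}$ is arboreous. Similarly, Theorem~\ref{freely indexed} gives that $\mathcal{G}_t$ is freely indexed, and the corresponding equivalence yields that $\mathfrak{F}_{A_t}$, and hence $\mathfrak{F}$, is freely indexed.

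The Hall property is then automatic from the implications recorded earlier, namely \emph{freely indexed} $\Rightarrow$ \emph{Hall} (or equivalently \emph{arboreous} $\Rightarrow$ \emph{Hall}). For the failure of local extensibility, by Definition~\ref{crucial properties} it suffices to exhibit a single $t\ge 2$ with $\mathfrak{F}_{A_t}$ not locally extensible. Theorem~\ref{not locally extensible}, applied with $G_0 := G_{0t}$ and the same sequence of primes, says precisely that the $PQ$-category generated by $(G_{nt})_{n\ge 0}$, which is $\mathfrak{F}_{A_t}$, is not locally extensible. This holds in fact for all $t\ge 2$, so $\mathfrak{F}$ is not locally extensible.

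I do not expect any real obstacle here: the substantive work was done in Theorems~\ref{tree-like}, \ref{freely indexed} and~\ref{not locally extensible}, and the final corollary only requires matching the hypotheses of those theorems to the chosen base groups $G_{0t}$ and prime sequence $(p_n)_{n\ge 0}$, and then passing between properties of $\mathcal{G}_t$ (or of $\mathfrak{F}_{A_t}$) and properties of the formation $\mathfrak{F}$ via Definition~\ref{crucial properties}. The only place where one has to stop and check is the divisibility condition on the primes, which is immediate from $p_0 \notin \{p_1,p_2,\dots\}$.
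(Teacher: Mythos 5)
Your proposal is correct and matches the paper's (implicit) argument exactly: the corollary is indeed just the combination of Theorems \ref{tree-like}, \ref{freely indexed} and \ref{not locally extensible} with the identification of $\mathfrak{F}_{A_t}$ as the quotients of $\{G_{nt}\mid n\ge 0\}$ and of $\mathcal{G}_t$ as the free pro-$\mathfrak{F}$ group, established in the theorem immediately preceding the corollary. Your explicit check that the primes $p_1,p_2,\dots$ are distinct from $p_0$ and hence do not divide $|G_{0t}|=p_0^{|A_t|}$ is the right hypothesis to verify, and the passage from the $PQ$-categories $\mathfrak{F}_{A_t}$ (for all $t\ge 2$) to the formation $\mathfrak{F}$ via Definition \ref{crucial properties} is handled correctly.
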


\subsection*{Acknowledgment}\label{sec:Acknowledgments} The authors would like to thank the referees for their very careful (and quick) reading of the paper. Their comments have led to considerable improvements.
%\begin{acknowledgments}
%The authors would like to thank the referees for their very careful (and quick) reading of the paper. Their comments lead %to considerable improvements.
%\end{acknowledgments}

\end{document}